\newcommand{\simeqd}{\mathrel{\rotatebox[origin=c]{-90}{$\simeq$}}}
\theoremstyle{definition} \newtheorem{defn}{Definition}[section]
\theoremstyle{definition} \newtheorem{example}[defn]{Example}
\theoremstyle{remark} \newtheorem{rem}[defn]{Remark}
\theoremstyle{plain} \newtheorem{prop}[defn]{Proposition}
\theoremstyle{plain} \newtheorem{lem}[defn]{Lemma}
\theoremstyle{plain} \newtheorem{cor}[defn]{Corollary}
\theoremstyle{definition} \newtheorem{notation}[defn]{Notation}
\theoremstyle{plain} \newtheorem*{thm*}{Theorem}
\theoremstyle{plain} \newtheorem{thm}{Theorem}
\theoremstyle{plain}
\begin{document}
\global\long\def\DR#1#2{{\rm DR}_{#1}\left(#2\right)}

\title{Meromorphic differentials, twisted DR cycles and quantum integrable hierarchies}

\author{Xavier Blot and Paolo Rossi}
\maketitle
\begin{abstract}
We define twisted versions of the classical and quantum double ramification hierarchy construction based on intersection theory of the strata of meromorphic differentials in the moduli space of stable curves and $k$-twisted double ramification cycles for $k=1$, respectively, we prove their integrability and tau symmetry and study their connection. We apply the construction to the case of the trivial cohomological field theory to find it produces the KdV hierarchy, although its relation to the untwisted case is nontrivial. The key role of the KdV hierarchy in controlling the intersection theory of several natural tautological classes translates this relation into a series of remarkable identities between intersection numbers involving psi-classes, Hodge classes, Norbury's theta class and the strata of meromorphic differentials.
\end{abstract}
\begin{quotation}
\tableofcontents{}
\end{quotation}

\section{Introduction}

\subsection{Overview}



The last years have seen remarkable effort in the study of the double ramification (DR) hierarchies, a family of integrable hierarchies parametrized by cohomological field theories and constructed using intersection theory of the DR cycle in the moduli space of stable curves. They were introduced in \cite{Buryak15} in the classical setting, and a quantization was defined in \cite{BR2016}. These constructions play an important role in shedding light on the interplay between intersection theory of the moduli space of curves and integrable hierarchies. This provides a novel geometric approach to integrability of evolutionary PDEs and their quantization in the Hamiltonian case, leading for example to an entirely new class of recursion equations for their symmetries \cite{buryak2016recursion,BR2016}. In an opposite direction, these techniques lead to new insight on the cohomology of the moduli space of curves, see for example the  so-called DR/DZ equivalence in \cite{BGR2019} and its generalizations in \cite{buryak2022tautological}, where integrability considerations are used to produce a novel infinite family of tautological relations involving the double ramification cycle. These tautological relations were recently proved in \cite{blot2024strong,blot2024master}. In parallel, a new system of tautological relation was proposed in \cite{blot2024stable}, this time motivated by the geometric representation of tau functions of the quantum DR hierarchies. These relations involve the double ramification cycle, but this time, they also include the so-called Chiodo class. They are proved in \cite{blot2024rooted}. Finally, to mention a few more directions, the DR hierarchies are related to various topics, including modularity \cite{van2022quantum, van2024quantum} and Hurwitz theory \cite{blot2022quantum}.


The goal of this paper is to initiate the study of a twisted version of the double ramification hierarchies, where the twist consists in replacing the usual double ramification cycle with either the fundamental class of strata of meromorphic differentials or the $k=1$-twisted double ramification cycles. Both of these classes represent a way to compactify the locus of meromorphic differentials in the space of smooth stable curves to its Deligne-Mumford compatification. These generalizations are based on similar ideas and integrability and other properties like tau-symmetry, which we prove in this paper, depend on similar cohomological relations satisfied by the corresponding cycles, in particular the splitting property when intersected with boundary strata or psi-classes.

We then study the classical and quantum hierarchy associated to the trivial CohFT. Here we find a somewhat surprising connection with the traditional, untwisted, DR hierarchy, i.e. the (quantum) KdV hiearchy.  We also find an explicit relation between the meromorphic differential and twisted DR hierarchy via a simple change of dependant variables. All this translates into a series of equalities of intersection numbers of independent interest computing Hodge integrals on the strata of meromorphic differentials. The study of its tau function provides more general relations with intersection numbers for other tautological classes, namely monomials in psi-classes and Norbury's class.

\subsection{Plan of the paper}

After defining the necessary algebraic setting of differential polynomials, local functionals, Poisson structures and their quantization, and their formal Laurent series counterpart in Section $2$ we introduce the meromorphic differential and twisted DR hierarchies in Section $3$, where we prove their integrabilty and tau-symmetry as well.

In Section $5$ we compute the meromorphic differential hierarchy for the trivial CohFT, showing first that it is described in terms of differential polynomials and local functionals and identifying it with a quantization of the KdV hiearchy.

Section $6$ is devoted to applications, in particular relations between the meromorphic differential tau-structure and the classical and quantum Witten-Kontsevich and Brezin-Gross-Witten tau functions of (quantum) KdV.

Chapter $7$ and $8$ collects the longer proofs of all of our main results.

\subsection{Acknowledgments}

The development of this project, in particular  the main theorem, owes much to computer experiments performed using the \texttt{admcycles} package \cite{delecroix2022admcycles} to compute various intersection numbers with DR cycles and strata of differentials. 

We specially acknowledge A. Buryak for a key observation which went into the proof of the main theorem in the quantum setting. We are grateful to A. Sauvaget for insightful discussions on strata of differentials and his results on that topic. We also thank D. Zvonkine and R. Tessler for helpful discussions. 

X.B was partially supported by the ISF grant 335/19 in the group of Ran Tessler, and by the Netherland Organization for Scientific Research. P. R. is supported by the University of Padova and is affiliated to the INFN under the national project
MMNLP and to the INdAM group GNSAGA.

\section{Deformation quantization of a formal Poisson structure}

In this section we first introduce a formal Poisson structure and then describe a deformation quantization for it. This formal Poisson structure coincides with the formal Poisson structure of the DR hierarchy in the appropriate setting. 

\begin{notation} 
\label{notation: V}
We fix once and for all $V$ a vector space of dimension $N\geq1$ endowed with a symmetric nondegenerate bilinear form $\eta \in (V^*)^{\otimes 2}$. We fix a basis $\left(e_{1},\dots,e_{N}\right)$ of $V$. Let $\eta_{\alpha\beta}:=\eta\left(e_{\alpha},e_{\beta}\right)$ and denote by $\eta^{\alpha\beta}$ the entries of the inverse matrix $((\eta_{\alpha\beta})_{1 \leq \alpha,\beta\leq N})^{-1}$.
\end{notation}

\subsection{Formal Poisson structure}

We want to define a space of functions on the (formal) loop space of the vector space $V$. An element of this formal loop space can be heuristically thought of as a map $u:S^{1}\rightarrow V$ from the unit circle $S^{1}\subset\mathbb{C}$, but in fact is a formal Laurent series with values in $V$, so that each of its compenents $u^{\alpha}(x)$ for $1\leq\alpha\leq N$, along the basis $\{e_\alpha\}_{1\leq \alpha\leq N}$ is only defined as a formal Laurent series
\[
u^{\alpha}\left(x\right)=\sum_{m\in\mathbb{Z}}q_{m}^{\alpha}\left(ix\right)^{m}
\]
of the formal variable $x$. In this context, functions and Poisson brackets on the space of such formal maps are expressed either as power series of the Laurent coefficients $q_{*}^{*}$, or, in more special cases, as local functionals whose densities are power series of $u^{*}\left(x\right)$ and their derivatives with respect to $x$.

\subsection{Differential polynomials}

Let $u^\alpha_k$, for $1\leq \alpha\leq N$, $k\in \mathbb{Z}_{\geq 0}$, and $\epsilon$ be formal variables. We denote $u^\alpha:=u^\alpha_0$. A \emph{differential polynomial} is an element of the algebra $\mathcal{A}_{u}=\mathbb{C}\left[\left[u^{*}_0\right]\right]\left[u_{>0}^{*}\right]\left[\left[\epsilon\right]\right]$. We make $\mathcal{A}_u$ into a graded algebra by assinging ${\rm deg}\left(u_{k}^{\alpha}\right)=k$ and ${\rm deg}\left(\epsilon\right)=-1$.\\

We denote by $A^{[d]}$ the degree $d$ part of a graded vector space $A$, notice that $\mathcal{A}_u^{[d]} = \mathbb{C}[[u^*_*,\epsilon]]^{[d]}$ with respect to the grading of the formal variables described above.\\

As outlined in the heuristic picture described above, we would like to describe differential polynomials using another set of variables, namely the coefficients of a formal Laurent series representation of the ``map'' $u$. This is made precise by considering new formal variables $q_{m}^{\alpha}$, for $1\leq \alpha\leq N$, $m \in \mathbb{Z}$ and $x$, and the algebra morphism $\phi:\mathcal{A}_u\rightarrow\mathcal{B}:=\mathbb{C}\left[q_{>0}^{*}\right]\left[\left[q_{\leq0}^{*}\right]\right]\left[\left[ix,\left(ix\right)^{-1}\right]\right]\left[\left[\epsilon\right]\right]$ defined on generators as   
\begin{equation}
u_{s}^{\alpha} \mapsto \sum_{m\in\mathbb{Z}}\underset{m^{\underline{s}}}{\underbrace{m\left(m-1\right)\cdots\left(m-s+1\right)}}q_{m}^{\alpha}i^{m}x^{m-s}, \qquad \epsilon \mapsto \epsilon. \label{eq: u_s in q variables}
\end{equation}
The symbol $m^{\underline{s}}$ is called a \emph{falling factorial}. The map $\phi$ is injective and we denote by $\mathcal{A}_{q}$ its image. We interpret this map as a change of variables between $u$-variables and $q$-variables and sometimes write \eqref{eq: u_s in q variables} as an equality. The elements of $\mathcal{A}_{q}$ have the form
\begin{equation}
f\left(x\right)=\sum_{n,l\geq0}\sum_{s=0}^{d_l}\sum_{\substack{m_{1},\dots,m_{n}\in\mathbb{Z}\\
1\leq\alpha_{1},\dots,\alpha_{n}\leq N
}
}f_{n,l,s}^{(\alpha_1,\dots,\alpha_n)}\left(m_{1},\dots,m_{n}\right) q_{m_{1}}^{\alpha_{1}}\cdots q_{m_{n}}^{\alpha_{n}}i^{\sum_{j=1}^{n}m_{j}}x^{\sum_{j=1}^{n}m_{j}-s}\epsilon^l,\label{eq: q variable diff poly}
\end{equation}
where $d_l$ is a nonnegative integer for $l\geq0$, and such that $f_{n,l,s}^{(\alpha_1,\dots,\alpha_n)}$ is either $0$ or a homogenous factorial polynomial of degree $s$ in $\mathbb{C}\left[m_{1},\dots m_{n}\right]$, i.e. a linear combination of factorial monomials $m_{1}^{\underline{s_{1}}}\cdots m_{n}^{\underline{s_{n}}}$ satisfying $\sum_{i=1}^{n}s_{i}=s$. 

\begin{example}
The differential monomial $u_{r_{1}}^{\beta_1}\cdots u_{r_{k}}^{\beta_k}$ corresponds, in $q$-variables, to the element of $\mathcal{A}_q$ of the form \eqref{eq: q variable diff poly} with
\[
f_{n,l,s}^{(\alpha_{1},\dots,\alpha_{n})}\left(m_{1},\dots,m_{n}\right)=\begin{cases}
m_{1}^{\underline{r_{1}}}\cdots m_{k}^{\underline{r_{k}}} & {\rm if}\,\ensuremath{n=k,\,l=0,\,s=\sum_{i=1}^{k}r_{k}},\,\ensuremath{\alpha_{i}=\beta_{i}},\\
0 & {\rm otherwise.}
\end{cases}
\]
Note that $m_{1}^{\underline{r_{1}}}\cdots m_{k}^{\underline{r_{k}}}$ is indeed a homogenous factorial polynomial of degree $\sum_{i=1}^{k}r_{k}$.\\
\end{example}

A \emph{singular differential polynomial} is an element of the algebra $\mathcal{A}_{u}^{{\rm sing}}=\mathbb{C}\left[\left[u^{*}\right]\right]\left[u_{>0}^{*},x^{-1}\right]\left[\left[\epsilon\right]\right]$. We make $\mathcal{A}_{u}^{{\rm sing}}$ into a graded algebra by assinging ${\rm deg}\left(u_{k}^{\alpha}\right)=k,$ ${\rm deg}\left(\epsilon\right)=-1$ as above, and ${\rm deg}\left(\frac{1}{x}\right)=1.$\\

The map (\ref{eq: u_s in q variables}) extends to $\mathcal{A}_{u}^{{\rm sing}}$ by mapping $x \mapsto x$ and identifies the algebra $\mathcal{A}_{u}^{{\rm sing}}$ with its image $\mathcal{A}_{q}^{{\rm sing}}$ in $\mathcal{B}$ given by elements of the form (\ref{eq: q variable diff poly}) where $f_{n,l,s}^{(\alpha_1,\dots,\alpha_n)} \in \mathbb{C}\left[m_{1},\dots m_{n}\right]$ is a factorial polynomial of degree $s$. Notice the elementary fact that a polynomial of degree $s$ is uniquely written as a factorial polynomial of degree $s$ and conversely. 

\begin{example}
The singular differential monomial $u_{r_{1}}^{\beta_1}\cdots u_{r_{k}}^{\beta_k} x^{-j}$ with $j\geq 0$ corresponds in $q$-variables to
\[
f_{n,l,s}^{(\alpha_{1},\dots,\alpha_{n})}\left(m_{1},\dots,m_{n}\right)=\begin{cases}
m_{1}^{\underline{r_{1}}}\cdots m_{k}^{\underline{r_{k}}} & {\rm if}\,\ensuremath{n=k,\,l=0,\,s=\sum_{i=1}^{k}r_{k}}+j,\,\ensuremath{\alpha_{i}=\beta_{i},}\\
0 & {\rm otherwise,}
\end{cases}
\]
and indeed $m_{1}^{\underline{r_{1}}}\cdots m_{k}^{\underline{r_{k}}}$ is a polynomial of degree $\sum_{i=1}^{k}r_{k} =s-j \leq s$ and hence also of degree $s$.
\end{example}

\begin{example}
Consider the singular differential polynomial, depending only linearly on the $u$ variables, of the form $\sum_{r,j\geq 0} c_{\alpha,r,j} u^\alpha_r x^{-j}$, where both sums are finite and $c_{\alpha,r,j} \in \mathbb{C}$. It corresponds, in $q$-variables, to the expression
$$\sum_{s \geq 0}\sum_{m\in \mathbb{Z}} \left(\sum_{r=0}^s c_{\alpha,r,s-r} m^{\underline{r}}\right) q^\alpha_m i^m x^{m-s},$$
where, indeed, $\sum_{r=0}^s c_{\alpha,r,s-r} m^{\underline{r}}$ is the general factorial polynomial of degree $s$.
\end{example}

To summarize, we have
\[
\begin{array}{ccccc}
\mathcal{A}_{u} & \subset & \mathcal{A}_{u}^{{\rm sing}}\\
\simeqd &  & \simeqd\\
\mathcal{A}_{q} & \subset & \mathcal{A}_{q}^{{\rm sing}} & \subset & \mathcal{B}.
\end{array}
\]

We define the derivation $\partial_{x}:\mathcal{B}\rightarrow \mathcal{B}$ simply as $\partial_x = \frac{\partial}{\partial x}$, the derivative with respect to the formal variable $x$. It restricts well to both $ \mathcal{A}_{q}^{{\rm sing}}$ and $\mathcal{A}_{q}$. Via the above isomorphisms, the derivation $\partial_x$ corresponds to the derivations $\partial_x := \frac{\partial}{\partial x} + \sum_{k \geq 0} u^\alpha_{k+1} \frac{\partial}{\partial u^{\alpha}_k}$ in $\mathcal{A}_u^{{\rm sing}}$ and $\partial_x := \sum_{k \geq 0} u^\alpha_{k+1} \frac{\partial}{\partial u^{\alpha}_k}$ in $\mathcal{A}_u$. Notice that this implies in particular $u^{\alpha}_k = \partial_x^k u^\alpha$.\\

In what follows we will effectively identify $\mathcal{A}_{u}$ and $\mathcal{A}_{q}$ (resp. $\mathcal{A}_{u}^{{\rm sing}}$ and $\mathcal{A}_{q}^{{\rm sing}}$) using the above isomorphisms and even remove the subscripts from the notation.\\

We will carry out most of the constructions in $\mathcal{B}$, but sometimes obtain elements of $\mathcal{A}^{{\rm sing}}$ or $\mathcal{A}$, in which case a $u$-variable representation as (singular) differential polynomials will become available.\\

\subsection{Local functionals}

Let $\overline{\mathcal{B}} := \mathbb{C}\left[q_{>0}^{*}\right]\left[\left[q_{\leq0}^{*}\right]\right]\left[\left[\epsilon\right]\right]$. The algebra  $\overline{\mathcal{B}}$ plays the role of algebra of functions on the formal loop space of $V$ in our heuristic picture. We define a map $\int \cdot\ dx : \mathcal{B} \to \overline{\mathcal{B}}$ by
\[
\int fdx={\rm Coef}_{\left(ix\right)^{-1}}\left(f-f\vert_{q_*^*=0}\right).
\]
We will also use the notation $\overline{f}:=\int fdx$. We have ${\rm Im}\,(\partial_{x}:\mathcal{B}\to \mathcal{B})\oplus\frac{\mathbb{C}[[\epsilon]]}{x}\subset {\rm Ker}\int\cdot\ dx$. The kernel of the restircted map $\int\cdot\,dx:\mathcal{A} \rightarrow\overline{\mathcal{B}}$ is exactly ${\rm Im}\, (\partial_{x}:\mathcal{A}\to\mathcal{A})\oplus\mathbb{C}[[\epsilon]]$, thus we will identify the image $\int \mathcal{A}dx$ with the quotient $\mathcal{A}/({\rm Im}\, (\partial_{x}:\mathcal{A}\to\mathcal{A})\oplus\mathbb{C}[[\epsilon]])$, which is called the space of \emph{local functionals}, and the map $\int\cdot\, dx: \mathcal{A} \to \int\mathcal{A}dx$ itself corresponds to the projection onto the quotient.
Similarly, the kernel of the restricted map $\int \cdot\ dx: \mathcal{A}^{{\rm sing}} \to \overline{\mathcal{B}}$ is exactly ${\rm Im}\, (\partial_{x}:\mathcal{A}^{{\rm sing}}\to\mathcal{A}^{{\rm sing}})\oplus\mathbb{C}[[\epsilon]]\oplus\frac{\mathbb{C}[[\epsilon]]}{x}$, thus we will identify the image $\int \mathcal{A}^{{\rm sing}} dx$ with the quotient $\mathcal{A}^{{\rm sing}}/({\rm Im}\,(\partial_{x}:\mathcal{A}^{{\rm sing}}\to\mathcal{A}^{{\rm sing}})\oplus\mathbb{C}[[\epsilon]]\oplus\frac{\mathbb{C}[[\epsilon]]}{x})$, which is called the space of \emph{singular local functionals}, , and the map $\int\cdot\, dx: \mathcal{A}^{{\rm sing}} \to \int\mathcal{A}^{{\rm sing}}dx$ itself corresponds to the projection onto the quotient.

\subsection{Poisson bracket}\label{subsec: Poisson bracket}

Consider the bilinear map $\left\{ \cdot,\cdot\right\} :\mathcal{B}\times\overline{\mathcal{B}}\rightarrow\mathcal{B}$ defined by 
\[
\left\{ f,\overline{g}\right\} =\sum_{k\in\mathbb{Z}}ik\eta^{\alpha\beta}\frac{\partial f}{\partial q_{k-1}^{\alpha}}\frac{\partial\overline{g}}{\partial q_{-k-1}^{\beta}}.
\]
Via the map $\int \cdot\ dx:\mathcal{B} \to \overline{\mathcal{B}}$, the above brackets descends to the Poisson brackets $\{\cdot, \cdot\}:\overline{\mathcal{B}}\times\overline{\mathcal{B}}\rightarrow\overline{\mathcal{B}}$, and a simple computation shows that it can be written, for $\overline{f}, \overline{g} \in \overline{\mathcal{B}}$, as
\[
\left\{ \overline{f},\overline{g}\right\} =\int \left(\frac{\delta\overline{f}}{\delta u^{\alpha}}\eta^{\alpha\beta}\partial_{x}\frac{\delta\overline{g}}{\delta u^{\beta}}\right)dx,
\]
where the variational derivative operator $\frac{\delta}{\delta u^\alpha}:\overline{\mathcal{B}}\to \mathcal{B}$ is defined as 
\[
\frac{\delta\overline{f}}{\delta u^{\alpha}}:=\sum_{m\in\mathbb{Z}}\left(ix\right)^{-m-1}\frac{\partial\overline{f}}{\partial q_{m}^{\alpha}}.
\]
The variational derivative restricts to maps $\frac{\delta}{\delta u^\alpha}:\int \mathcal{A}^{{\rm sing}} dx\to \mathcal{A}^{{\rm sing}}$ and $\frac{\delta}{\delta u^\alpha}:\int \mathcal{A} dx \to \mathcal{A}$ and can be written in terms of the $u$-variables as
$$\frac{\delta \overline{f}}{\delta u^\alpha}=\sum_{s\geq0}\left(-\partial_{x}\right)^{s}\frac{\partial f}{\partial u_{s}^{\alpha}},$$
where $f \in \mathcal{A}^{{\rm sing}}$ (or $f \in \mathcal{A}$) is any representative of the equivalence class $\overline{f}$, and a simple computation shows that, when $f \in \mathcal{A}^{{\rm sing}}$ and $\overline{g} \in \int \mathcal{A}^{{\rm sing}} dx$ (or when $f \in \mathcal{A}$ and $\overline{g} \in \int \mathcal{A}dx$ ), the above brackets can be written as
\[
\left\{ f,\overline{g}\right\} =\sum_{s\geq0}\frac{\partial f}{\partial u_{s}^{\alpha}}\eta^{\alpha\beta}\partial_{x}^{s+1}\left(\frac{\delta\overline{g}}{\delta u^{\beta}}\right).
\]

\begin{rem}
The algebra of differential polynomials $\mathcal{A}_{u}$ and its quotient $\mathcal{A}/({\rm Im}\, (\partial_{x}:\mathcal{A}\to\mathcal{A})\oplus\mathbb{C}[[\epsilon]])$, as well as the Poisson structure above, are the same as the ones encountered in the construction of the standard DR hierarchies (see e.g. \cite{Buryak15,BR2016}). However, in the context of the DR hierarchies a change of variables corresponding to a Fourier series expansion of $u\left(x\right)$ is used, as opposed to the Laurent series expansion encoded by the map \eqref{eq: u_s in q variables}.
\end{rem}

\subsection{Deformation quantization\label{subsec:Deformation-quantization}}

\subsubsection{Star product and commutator in the $q$-variables}

We now describe a deformation quantization of the Poisson brackets we defined above. We follow the same procedure as \cite{BR2016}, namely we first extend the algebras $\overline{\mathcal{B}}$ to $\overline{\mathcal{B}}\left[\left[\hbar\right]\right]$ and then introduce a star product between its elements.\\

Let $\overline{f},\overline{g}\in \overline{\mathcal{B}}\left[\left[\hbar\right]\right]$. We define the star product
\[
\overline{f}\star \overline{g}:=\overline{f}\exp\left(\sum_{k>0}i\hbar k\eta^{\alpha\beta}\overleftarrow{\frac{\partial}{\partial q_{k-1}^{\alpha}}}\overrightarrow{\frac{\partial}{\partial q_{-k-1}^{\beta}}}\right)\overline{g},
\]
where the arrows specify onto which element, $\overline{f}$ or $\overline{g}$, the corresponding derivative acts. We denote by $\left[\overline{f},\overline{g}\right]=\overline{f}\star \overline{g}-\overline{g}\star \overline{f}$ the commutator of the star product and call it the quantum bracket.\\

As it's well known, this Moyal-type star product can be seen as naturally arising from normal ordering of the $q$-variables, we refer to \cite{BR2016,eliashberg2010introduction} for more details. This star product is an associative deformation of the standard commutative product in $\overline{\mathcal{B}}$ and the first order term in $\hbar$ of its commutator is given by the Poisson bracket, that is 
\[
\overline{f}\star \overline{g}=\overline{f}\overline{g}+\mathcal{O}\left(\hbar\right)
\]
and
\[
\left[\overline{f},\overline{g}\right]=\hbar\left\{ \overline{f},\overline{g}\right\} +\mathcal{O}\left(\hbar^{2}\right).
\]

\subsubsection{Commutator in $u$-variables}

Similarly to what we did for the Poisson bracket, we define a lift $ [\cdot,\cdot]:\mathcal{B}[[\hbar]]\times\overline{\mathcal{B}}[[\hbar]] \to \mathcal{B}[[\hbar]]$ of the above quantum bracket with respect to the natural extension of the integration map, $\int \cdot\, dx:\mathcal{B}[[\hbar]]\to \overline{\mathcal{B}}[[\hbar]]$, namely
\[
[f,\overline{g}]:=f\exp\left(\sum_{k>0}i\hbar k\eta^{\alpha\beta}\overleftarrow{\frac{\partial}{\partial q_{k-1}^{\alpha}}}\overrightarrow{\frac{\partial}{\partial q_{-k-1}^{\beta}}}\right)\overline{g} - \overline{g}\exp\left(\sum_{k>0}i\hbar k\eta^{\alpha\beta}\overleftarrow{\frac{\partial}{\partial q_{k-1}^{\alpha}}}\overrightarrow{\frac{\partial}{\partial q_{-k-1}^{\beta}}}\right)f.
\]

Next we want to express the restriction to $\mathcal{A}^{{\rm sing}}\left[\left[\hbar\right]\right]\times \int\mathcal{A}^{{\rm sing}}\left[\left[\hbar\right]\right]dx $ of this lifted commutator in terms of $u$-variables. An element of $\mathcal{A}^{{\rm sing}}\left[\left[\hbar\right]\right]$ (resp. $\mathcal{A}\left[\left[\hbar\right]\right]$) will still be called a singular differential polynomial (resp. differential polynomial). We make $\mathcal{A}^{{\rm sing}}\left[\left[\hbar\right]\right]$ and $\mathcal{A}\left[\left[\hbar\right]\right]$ into graded algebras by assigning  ${\rm deg}\left(u_{k}^{\alpha}\right)=k,$ ${\rm deg}\left(\epsilon\right)=-1$, ${\rm deg}\left(\frac{1}{x}\right)=1$ as above, and ${\rm deg\,}\hbar=-2.$
\begin{prop}
\label{prop: commutator formula}Let $f,g\in\mathcal{A}^{{\rm sing}}\left[\left[\hbar\right]\right]$. Then
\begin{align*}
	\left[f\left(x\right),\overline{g}\right]&= \sum_{n\geq1}\left(-i\right)^{n-1}\frac{\hbar^{n}}{n!} \times\\
	&\times \sum_{\substack{s_{1},\dots,s_{n}\geq0\\
r_{1},\dots,r_{n}\geq0
}
}\frac{\partial^{n}f}{\partial u_{s_{1}}^{\alpha_1}\cdots\partial u_{s_{n}}^{\alpha_n}}\left(-1\right)^{R}\frac{\prod_{i=1}^{n}\eta^{\alpha_i\beta_i}\left(s_{i}+r_{i}+1\right)!}{\left(S+R+2n-1\right)!}\partial_{x}^{S+R+2n-1}\frac{\partial^{n}g}{\partial u_{r_{1}}^{\beta_1}\cdots\partial u_{r_{n}}^{\beta_n}},
\end{align*}
where we have denoted $S=\sum_{i=1}^{n}s_{i}$ and $R=\sum_{i=1}^{n}r_{i}$.
\end{prop}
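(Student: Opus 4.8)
The plan is to extract the coefficient of $\hbar^n$ on both sides and match them as bidifferential expressions in $f$ and $g$. Expanding the exponential in the definition of the lifted commutator, the degree-$n$ part of $[f,\overline g]$ equals
\[
\frac{(i\hbar)^n}{n!}\sum_{k_1,\dots,k_n>0}\prod_{j=1}^n k_j\,\eta^{\alpha_j\beta_j}\left(\frac{\partial^n f}{\partial q_{k_1-1}^{\alpha_1}\cdots\partial q_{k_n-1}^{\alpha_n}}\frac{\partial^n\overline g}{\partial q_{-k_1-1}^{\beta_1}\cdots\partial q_{-k_n-1}^{\beta_n}} - (f\leftrightarrow g)\right).
\]
Since both the star-product commutator and the claimed right-hand side are bidifferential operators in $(f,g)$, it suffices to verify the identity on differential monomials; this makes every $q$-derivative, and hence every sum over the $k_j$, explicit.

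Next I convert to $u$-variables. For the factor carrying positive $q$-indices I use the chain rule dual to $u_s^\alpha=\sum_m m^{\underline s}q_m^\alpha i^m x^{m-s}$, namely $\partial/\partial q_{k-1}^\alpha = \sum_s (k-1)^{\underline s}i^{k-1}x^{k-1-s}\,\partial/\partial u_s^\alpha$, and I absorb the surviving powers of $k$ into $x$-derivatives through the elementary identity $k^{\underline{s+1}}x^{k-s-1}=\partial_x^{s+1}(x^k)$. In the second, $(f\leftrightarrow g)$ term I relabel each summation index by $k_j\mapsto -k_j$ and swap $\alpha\leftrightarrow\beta$ using the symmetry of $\eta$; the antisymmetrization then fuses the two half-sums $\sum_{k_j>0}$ into a single sum over $k_j\in\mathbb Z$, exactly as in the first-order check, where $[f,\overline g]=\hbar\{f,\overline g\}+O(\hbar^2)$ reproduces $\sum_{k\in\mathbb Z}ik\,\eta^{\alpha\beta}\frac{\partial f}{\partial q_{k-1}^\alpha}\frac{\partial\overline g}{\partial q_{-k-1}^\beta}$. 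Writing $\overline g=\mathrm{Coef}_{(ix)^{-1}}(g-g\vert_{q=0})$ and inserting the chain rule for $g$ produces the factors $(-k_j-1)^{\underline{r_j}}=(-1)^{r_j}(k_j+r_j)^{\underline{r_j}}$, whose signs assemble into the global $(-1)^R$, together with the variational relation $\frac{\delta\overline g}{\delta u^\beta}=\sum_r(-\partial_x)^r\frac{\partial g}{\partial u_r^\beta}$, which transfers all remaining $x$-derivatives onto $\partial^n g/\partial u_{r_*}^{\beta_*}$.

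The main obstacle is the reassembly of the derivatives. Because the $n$ factors $x^{k_j}$ produced above all involve the same variable $x$, their product collapses to $x^{\sum_j k_j}$ and the $n$ individual operators $\partial_x^{s_j+1}$ can no longer be applied independently; the whole expression must instead be rewritten as a single power $\partial_x^{S+R+2n-1}$ acting on $\partial^n g/\partial u_{r_*}^{\beta_*}$. Recovering the individual $k_j$ from the merged total forces a summation of $\prod_j k_j^{\underline{s_j+1}}(k_j+r_j)^{\underline{r_j}}$ over all compositions $k_1+\cdots+k_n$ of a fixed integer, and this Vandermonde/Dirichlet-type convolution is precisely what produces the coefficient $\frac{\prod_{i=1}^n(s_i+r_i+1)!}{(S+R+2n-1)!}$; the shift by $n-1$ in the denominator, namely $\sum_i(s_i+r_i+1)+(n-1)=S+R+2n-1$, is the combinatorial signature of fusing $n$ factors into one. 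Establishing this convolution identity, either by induction on $n$ or by recognizing the Dirichlet integral $\int_{\Delta_{n-1}}\prod_i t_i^{a_i}=\prod_i a_i!/(\sum_i a_i+n-1)!$ with $a_i=s_i+r_i+1$, is the one genuinely nontrivial computation.

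Finally I would collect the powers of $i$: the prefactor contributes $i^n$, the chain rule for $f$ contributes $i^{\sum_j k_j-n}$, and the chain rule for $g$ contributes $i^{-\sum_j k_j-n}$, for a total of $i^{-n}=(-i)^n$, while the single coefficient extraction $\mathrm{Coef}_{(ix)^{-1}}$ defining $\overline g$ supplies one further factor of $i$, turning $(-i)^n$ into the stated $(-i)^{n-1}$. Assembling the sign $(-1)^R$, the factor $(-i)^{n-1}\hbar^n/n!$, the coefficient $\prod_i(s_i+r_i+1)!/(S+R+2n-1)!$ and the operator $\partial_x^{S+R+2n-1}$ gives the asserted formula, with the $n=1$ case reducing to the classical bracket computation recorded above as a consistency check.
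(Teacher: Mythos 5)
Your overall architecture is the same as the paper's: pass to $u$-variables via the chain rule $\partial/\partial q_{m}^{\alpha}=\sum_{s}\partial_x^s\left((ix)^{m}\right)\partial/\partial u_s^{\alpha}$, and reduce the coefficient $\frac{\prod_i(s_i+r_i+1)!}{(S+R+2n-1)!}$ to the convolution identity $\sum_{a_1+\cdots+a_n=A}\prod a_i^{\underline{d_i}}=\frac{\prod d_i!}{(\sum d_i+n-1)!}(A+n-1)^{\underline{\sum d_i+n-1}}$, which is exactly the paper's Ehrhart-polynomial lemma. However, there is a genuine gap at the step where you combine the two terms of the commutator. After relabelling $k_j\mapsto -k_j$ and swapping $\alpha\leftrightarrow\beta$, the term $-\overline{g}\star f$ becomes $(-1)^{n+1}$ times a sum over the orthant $\{k_j<0\ \forall j\}$; for $n\geq 2$ this does \emph{not} fuse with the sum over $\{k_j>0\ \forall j\}$ into a sum over $k_j\in\mathbb{Z}$: all mixed-sign tuples are absent from the union of the two orthants, and for even $n$ the relative sign is $-1$ rather than $+1$. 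The analogy with the first-order check is exactly what breaks, because only for $n=1$ do the two orthants exhaust $\mathbb{Z}\setminus\{0\}$. Worse, even if the fusion held, the resulting convolution $\sum_{k_1+\cdots+k_n=K,\ k_j\in\mathbb{Z}}\prod_j k_j^{\underline{s_j+1}}(k_j+r_j)^{\underline{r_j}}$ runs over infinitely many integer compositions of $K$ and diverges; the Dirichlet/Ehrhart identity you invoke is only valid on a simplex, i.e.\ precisely on the positive orthant.

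The correct mechanism, which is what the paper implements, is different: the positive-orthant sum in $f\star\overline{g}$ produces $\partial_x^{S+R+2n-1}$ applied to the \emph{nonnegative-degree part} $\left(\partial^n g/\partial u_{r_*}^{\beta_*}\right)_+$ of the Laurent series (via the kernel $\delta_+(x-y)$), while $-\overline{g}\star f$ is processed separately and produces the negative-degree part via $\delta_-(x-y)$; only the sum of the two projections gives the full, unprojected $\partial_x^{S+R+2n-1}\partial^n g/\partial u_{r_*}^{\beta_*}$. Showing that the two contributions carry the \emph{same} overall coefficient $(-1)^R(-i)^{n-1}\prod(s_i+r_i+1)!/(S+R+2n-1)!$ requires an additional sign computation (essentially $(-A)^{\underline{D}}=(-1)^D(A+D-1)^{\underline{D}}$ when rewriting the negative-power monomials as $\partial_x^D$ of negative powers of $ix$), and it is exactly this computation that absorbs the spurious $(-1)^{n+1}$ your relabelling produces. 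As written, your argument skips the entire projection-and-recombination analysis, so the passage from the two half-expressions to the stated closed formula is not justified.
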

Notice that, if ${\rm deg }(f)= d_1$ and ${\rm deg}(g) = d_2$, it follows from this formula that ${\rm deg}\left(\left[f,\overline{g}\right]\right) =d_{1}+d_{2}-1$.\\
\begin{rem}
\label{rem: comparison commutator DR and MD}
When restricted to $\mathcal{A}[[\hbar]]\times\int\mathcal{A}[[\hbar]]dx$, we find that this commutator is different from the commutator $[\cdot, \cdot]^{{\rm DR}}$ for the DR hierarchies, see formula $\left(1.2\right)$ of \cite{BR2016}. Explicitly, let $f,g \in \mathcal{A}$ with ${\rm deg }(f)= d_1$ and ${\rm deg}(g) = d_2$, then $\left[f,\overline{g}\right]$ coincides with the top degree part of $\left[f,\overline{g}\right]^{{\rm DR}}$,
\begin{align}
	\left[f,\overline{g}\right]^{{\rm DR}}=\left[f,\overline{g}\right]+{\rm terms\,of\,degree\,lower\,than}\,d_{1}+d_{2}-1. \label{eq: link commutator DR and MD}
\end{align}
\end{rem}

\subsubsection{Proof of Proposition \ref{prop: commutator formula}}

In this section we prove the commutator formula of Proposition~\ref{prop: commutator formula}. 

\paragraph{Ehrhart polynomials.}

Let $n\geq1$ be a positive integer and let $d_{1},\dots,d_{n}\geq 0$ be nonnegative integers. For all nonnegative integers $A\geq0$, we study

\[
C^{d_{1},\dots,d_{n}}\left(A\right)=\sum_{\substack{a_{1},\dots,a_{n}\in\mathbb{Z}_{\geq0}\\
a_{1}+\cdots+a_{n}=A
}
}a_{1}^{\underline{d_{1}}}\cdots a_{n}^{\underline{d_{n}}}.
\]

\begin{lem}
\label{lem: Ehrhart polynomials} $C^{d_{1},\dots,d_{n}}\left(A\right)$ is a polynomial in $A$ of degree $\sum_{i=1}^{n}d_{i}+n-1$ given by 
\[
C^{d_{1},\dots,d_{n}}\left(A\right)=\frac{d_{1}!\cdots d_{n}!}{\left(\sum d_{i}+n-1\right)!}\left(A+n-1\right)^{^{\underline{\sum d_{i}+n-1}}}.
\]
\end{lem}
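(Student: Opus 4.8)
The plan is to prove the closed formula
\[
C^{d_{1},\dots,d_{n}}(A)=\frac{d_{1}!\cdots d_{n}!}{(\sum d_{i}+n-1)!}(A+n-1)^{\underline{\sum d_{i}+n-1}}
\]
by a generating-function argument, which I expect to be both the cleanest route and the one that makes the appearance of falling factorials most transparent. First I would recall the elementary identity
\[
\sum_{a\geq 0} a^{\underline{d}}\, t^{a} = \frac{d!\, t^{d}}{(1-t)^{d+1}},
\]
which follows from differentiating the geometric series $\sum_{a\geq0} t^a = (1-t)^{-1}$ exactly $d$ times (equivalently, $a^{\underline d} t^{a-d} = \frac{d^d}{dt^d} t^a$ up to reindexing). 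The key point is that $C^{d_1,\dots,d_n}(A)$ is, by definition, the coefficient of $t^A$ in the product $\prod_{i=1}^n \bigl(\sum_{a_i\geq 0} a_i^{\underline{d_i}} t^{a_i}\bigr)$, since convolving the one-variable series precisely enforces the constraint $a_1+\cdots+a_n = A$.

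The main step is then to multiply these $n$ series together. Writing $D:=\sum_{i=1}^n d_i$, the product telescopes to
\[
\prod_{i=1}^{n}\frac{d_i!\, t^{d_i}}{(1-t)^{d_i+1}}
= \Bigl(\prod_{i=1}^n d_i!\Bigr)\,\frac{t^{D}}{(1-t)^{D+n}}.
\]
Now I would extract the coefficient of $t^A$. Using the single identity above once more in reverse, the coefficient of $t^{B}$ in $(1-t)^{-(D+n)}$ equals $\binom{B+D+n-1}{D+n-1}=\frac{(B+D+n-1)^{\underline{D+n-1}}}{(D+n-1)!}$. Setting $B=A-D$ to account for the factor $t^{D}$, the coefficient of $t^A$ becomes $\frac{(A+n-1)^{\underline{D+n-1}}}{(D+n-1)!}$, and multiplying by the prefactor $\prod_i d_i!$ yields exactly the claimed formula. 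The degree assertion is then immediate: $(A+n-1)^{\underline{D+n-1}}$ is a product of $D+n-1$ linear factors in $A$, hence a polynomial of degree $D+n-1=\sum_{i=1}^n d_i + n-1$, as stated.

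I do not anticipate a serious obstacle here; the only point requiring a little care is the bookkeeping of the shift by $t^{D}$ and the conversion between the binomial coefficient and the falling-factorial normalization, so that the factorials $(D+n-1)!$ and the shift $A+n-1$ come out correctly. An alternative, should one prefer to avoid generating functions, would be an induction on $n$: the case $n=1$ is the identity $\sum_{a_1\leq A}$ collapsing to $A^{\underline{d_1}}$ trivially (there is only one term), and the inductive step would use the Vandermonde-type summation $\sum_{a=0}^{A} a^{\underline{d}}(A-a+k)^{\underline{m}} = \frac{d!\,(A+k+1)^{\underline{d+m+1}}}{(d+m+1)!}\cdot\frac{(m)!}{?}$-type identity, but matching constants in that approach is more delicate than the generating-function computation, so I would present the latter as the main argument.
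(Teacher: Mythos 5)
Your proposal is correct and follows essentially the same route as the paper's own proof: both compute the generating series $\sum_{a}a^{\underline{d}}t^{a}=\frac{d!\,t^{d}}{(1-t)^{d+1}}$, multiply to get $\bigl(\prod_i d_i!\bigr)\,t^{D}(1-t)^{-(D+n)}$, and extract the coefficient of $t^{A}$ via the negative-binomial expansion. The bookkeeping of the shift by $t^{D}$ and the conversion to falling factorials is handled correctly, so nothing further is needed.
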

\begin{proof}
Let
\[
F_{n}\left(z\right)=\sum_{a\geq n}a^{\left(n\right)}z^{a}.
\]
We have 
\[
F_{d_{1}}\left(z\right)\cdots F_{d_{n}}\left(z\right)=\sum_{A\geq\sum d_{i}}C^{d_{1},\dots,d_{n}}\left(A\right)z^{A}
\]
and obviously $C^{d_{1},\dots,d_{n}}\left(A\right)=0$ when $A<\sum d_{i}$. Moreover, we have
\[
F_{n}\left(z\right)=z^{n}\left(\frac{d}{dz}\right)^{n}\frac{1}{1-z}=\frac{n!z^{n}}{\left(1-z\right)^{n+1}}.
\]
Thus 
\begin{align*}
F_{d_{1}}\left(z\right)\cdots F_{d_{n}}\left(z\right) & =d_{1}!\cdots d_{n}!\frac{z^{\sum d_{i}}}{\left(1-z\right)^{\sum d_{i}+n}}.
\end{align*}
Finally, using $\frac{1}{\left(1-z\right)^{d}}=\sum_{j\geq0}\frac{\left(j+d-1\right)^{^{\underline{d-1}}}}{\left(d-1\right)!}z^{j}$ we get 
\begin{align*}
F_{d_{1}}\left(z\right)\cdots F_{d_{n}}\left(z\right) & =d_{1}!\cdots d_{n}!\sum_{j\geq0}\frac{\left(j+\sum d_{i}+n-1\right)^{^{\underline{\sum d_{i}-1}}}}{\left(\sum d_{i}+n-1\right)!}z^{j+\sum d_{i}}\\
 & =d_{1}!\cdots d_{n}!\sum_{A\geq\sum d_{i}}\frac{\left(A+n-1\right)^{^{\underline{\sum d_{i}-1}}}}{\left(\sum d_{i}+n-1\right)!}z^{A}.
\end{align*}
This proves the proposition.
\end{proof}

\paragraph{Computing $f\star\overline{g}$. }

When needed, we will stress the dependence of $f \in \mathcal{B}[[\hbar]]$ (resp. $f\in \mathcal{A}^{{\rm sing }}[[\hbar]] \subset \mathcal{B}[[\hbar]]$ or $f\in \mathcal{A}[[\hbar]] \subset \mathcal{B}[[\hbar]]$ ) on the formal variable $x$ by enriching our notations to $f(x) \in \mathcal{B}^x[[\hbar]]$ (resp. $f(x) \in  \mathcal{A}^{{\rm sing },x}[[\hbar]]\subset \mathcal{B}^x[[\hbar]] $ or $f(x)\in \mathcal{A}^x[[\hbar]] \subset \mathcal{B}^x[[\hbar]]$). Accordingly, we define, for $f(x) \in \mathcal{B}^x[[\hbar]]$ and $g(y) \in \mathcal{B}^y[[\hbar]]$,
$$f(x) \star g(y) := f(x)\exp\left(\sum_{k>0}i\hbar k\eta^{\alpha\beta}\overleftarrow{\frac{\partial}{\partial q_{k-1}^{\alpha}}}\overrightarrow{\frac{\partial}{\partial q_{-k-1}^{\beta}}}\right)g(y),$$ where this formula should be interpreted as a Laurent series in both formal the variables $x$ and $y$. An expression of $f\left(x\right)\star\overline{g}$ is obtained as a corollary of the following proposition.
\begin{prop}
\label{prop: star product f and g}Let $f(x)\in \mathcal{A}^{{\rm sing},x}[[\hbar]]$ and $g(y)\in \mathcal{A}^{{\rm sing},y}[[\hbar]]$. Then 
\begin{align}
f\left(x\right)\star g\left(y\right) & =\sum_{n\geq0}\left(-i\right)^{n-1}\frac{\hbar^{n}}{n!}\sum_{\substack{s_{1},\dots,s_{n}\geq0\\
r_{1},\dots,r_{n}\geq0
}
}\frac{\partial^{n}f}{\partial u_{s_{1}}^{\alpha_1}\cdots\partial u_{s_{n}}^{\alpha_n}}\left(x\right)\frac{\partial^{n}g}{\partial u_{r_{1}}^{\alpha_1}\cdots\partial u_{r_{n}}^{\beta_n}}\left(y\right)\label{eq: star product in u}\\
 & \quad\times\left(-1\right)^{R}\frac{\prod_{k=1}^{n}\eta^{\alpha_k\beta_k}\left(s_{k}+r_{k}+1\right)!}{\left(S+R+2n-1\right)!}\partial_{x}^{S+R+2n-1}\delta_{+}\left(x-y\right),\nonumber 
\end{align}
where $R=\sum r_{j}$ and $S=\sum s_{j}$ and $\delta_{+}\left(x-y\right)=\sum_{a\geq1}\frac{\left(ix\right)^{a-1}}{\left(iy\right)^{a}}$. 
\end{prop}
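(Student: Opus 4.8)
The plan is to compute $f(x)\star g(y)$ directly from the definition of the star product by expanding the exponential and translating each term from $q$-variables into $u$-variables using the change of variables \eqref{eq: u_s in q variables}. First I would expand
\[
f(x)\star g(y) = \sum_{n\geq 0}\frac{(i\hbar)^n}{n!}\sum_{\substack{k_1,\dots,k_n>0\\ \alpha_1,\dots,\alpha_n\\ \beta_1,\dots,\beta_n}} \left(\prod_{j=1}^n k_j\,\eta^{\alpha_j\beta_j}\right) \frac{\partial^n f}{\partial q^{\alpha_1}_{k_1-1}\cdots\partial q^{\alpha_n}_{k_n-1}}(x)\,\frac{\partial^n g}{\partial q^{\beta_1}_{-k_1-1}\cdots\partial q^{\beta_n}_{-k_n-1}}(y),
\]
so the whole problem reduces to re-expressing the $q$-derivatives in terms of $u$-derivatives and then resumming the indices $k_1,\dots,k_n$. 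The key technical device is the chain rule applied to \eqref{eq: u_s in q variables}: since $u^\alpha_s$ is a linear combination of the $q^\alpha_m$ with coefficient $m^{\underline{s}}\,i^m x^{m-s}$, one has $\frac{\partial}{\partial q^\alpha_m} = \sum_{s\geq 0} m^{\underline{s}}\,i^m x^{m-s}\,\frac{\partial}{\partial u^\alpha_s}$, which converts each $q$-derivative of $f$ (resp.\ $g$) into a sum of $u$-derivatives weighted by falling-factorial factors in the index $m=k-1$ (resp.\ $m=-k-1$) and powers of $x$ (resp.\ $y$).

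After this substitution, the $n$-fold sum over $k_1,\dots,k_n>0$ factorizes into a product over $j$ of one-dimensional sums, each of the shape $\sum_{k>0} k\,(k-1)^{\underline{s_j}}(-k-1)^{\underline{r_j}}\,(ix)^{k-1-s_j}(iy)^{-k-1-r_j}$. The heart of the argument is to recognize that, after relabeling, each such factor is governed by the generating function $\delta_+(x-y)=\sum_{a\geq 1}\frac{(ix)^{a-1}}{(iy)^a}$ and its $x$-derivatives: one checks that $\partial_x^{m}\delta_+(x-y)$ produces exactly the falling-factorial weights $a^{\underline{m}}$ against the monomials $(ix)^{a-1-m}(iy)^{-a}$. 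The combinatorial identity needed to collapse the product of these one-dimensional sums into a single derivative $\partial_x^{S+R+2n-1}\delta_+(x-y)$ with the stated coefficient $\frac{\prod_j(s_j+r_j+1)!}{(S+R+2n-1)!}$ is precisely Lemma~\ref{lem: Ehrhart polynomials}: the constraint $a_1+\cdots+a_n=A$ appearing in $C^{d_1,\dots,d_n}(A)$ arises when one sets $A$ equal to the total power of $(ix)$ that survives and sums the product of falling factorials over all ways of distributing it among the $n$ tensor factors.

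Concretely, I would set $d_j = s_j+r_j+1$ so that each one-dimensional factor contributes a falling factorial of this degree, and match the global prefactor: Lemma~\ref{lem: Ehrhart polynomials} gives $C^{d_1,\dots,d_n}(A) = \frac{\prod_j d_j!}{(\sum d_j+n-1)!}(A+n-1)^{\underline{\sum d_j+n-1}}$, and $\sum_j d_j + n - 1 = S+R+2n-1$, which is exactly the order of the $x$-derivative in the target formula. The remaining step is to verify that the residual $A$-dependent sum $\sum_A (A+n-1)^{\underline{S+R+2n-1}}(ix)^{\cdots}(iy)^{\cdots}$ reassembles into $\partial_x^{S+R+2n-1}\delta_+(x-y)$, using the falling-factorial identity for derivatives of $\delta_+$ noted above. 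I expect the main obstacle to be precisely this bookkeeping of exponents and signs: tracking the shifts in the summation index caused by $k-1$ versus $-k-1$, accounting for the factor $k$ coming from the bracket, and correctly producing the sign $(-1)^R$ from the $\eta^{\alpha\beta}$-paired derivatives evaluated at negative indices $-k-1$. Once the single nontrivial resummation is pinned down by Lemma~\ref{lem: Ehrhart polynomials}, the rest is careful but routine algebra; the proposition then follows by comparing coefficients order-by-order in $\hbar$ and $n$.
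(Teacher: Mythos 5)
Your proposal follows essentially the same route as the paper's proof: expand the exponential defining the star product, convert $q$-derivatives to $u$-derivatives via the chain rule for the change of variables \eqref{eq: u_s in q variables}, factorize into one-dimensional sums over $k>0$, and collapse the product of falling-factorial sums using Lemma~\ref{lem: Ehrhart polynomials} with $d_j=s_j+r_j+1$ so that $\sum_j d_j+n-1=S+R+2n-1$ matches the order of the derivative of $\delta_+(x-y)$. The remaining bookkeeping of signs and index shifts that you flag is exactly what the paper carries out, so the plan is sound and matches the published argument.
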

Let $f\left(y\right)=\sum_{m\in\mathbb{Z}}f_{m}y^{m}$ be a Laurent series. We have $\int_{S^{1}}\delta_{+}\left(x-y\right)f\left(y\right)dy=\left(f\left(x\right)\right)_{+}$ where $\left(f\left(x\right)\right)_{+}=\sum_{m\geq0}f_{m}x^{m}$. Thus, integrating with respect to $y$ formula~(\ref{eq: star product in u}) and using integration by parts several times we get the following expression. 
\begin{cor}
We have
\begin{align*}
f\left(x\right)\star\overline{g} & =\sum_{n\geq0}\left(-i\right)^{n-1}\frac{\hbar^{n}}{n!}\sum_{\substack{s_{1},\dots,s_{n}\geq0\\
r_{1},\dots,r_{n}\geq0
}
}\frac{\partial^{n}f}{\partial u_{s_{1}}^{\alpha_1}\cdots\partial u_{s_{n}}^{\alpha_n}}\left(x\right)\left(-1\right)^{R}\frac{\prod_{k=1}^{n}\eta^{\alpha_k \beta_k}\left(s_{k}+r_{k}+1\right)!}{\left(S+R+2n-1\right)!}\\
 & \quad\times\left(\partial_{x}^{S+R+2n-1}\left(\frac{\partial^{n}g}{\partial u_{r_{1}}^{\beta_1}\cdots\partial u_{r_{n}}^{\beta_n}}\right)\left(x\right)\right)_{+}.
\end{align*}
\end{cor}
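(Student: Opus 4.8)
The plan is to read off the Corollary from Proposition~\ref{prop: star product f and g} by integrating out the variable $y$, the key point being that $f(x)\star\overline g=\int_{S^1}\left[f(x)\star g(y)\right]dy$ for any representative $g(y)\in\mathcal{A}^{{\rm sing},y}[[\hbar]]$ of $\overline g$. First I would isolate the two properties of the kernel $\delta_+(x-y)=\sum_{a\geq1}\frac{(ix)^{a-1}}{(iy)^a}$ that drive the computation: the sifting identity $\int_{S^1}\delta_+(x-y)h(y)\,dy=(h(x))_+$ stated above, and the antisymmetry $\partial_x\delta_+(x-y)=-\partial_y\delta_+(x-y)$, which is immediate by differentiating the series term by term (equivalently, from $\delta_+(x-y)=\frac{1}{i(y-x)}$). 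I would also note that integration by parts in $y$ leaves no boundary term, since $\int_{S^1}\partial_y(\,\cdot\,)\,dy$ equals the $(iy)^{-1}$-coefficient of a total $y$-derivative, which vanishes.

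Granting these, the core of the argument is purely mechanical. In the expression~\eqref{eq: star product in u} for $f(x)\star g(y)$ the only factors depending on $y$ are $\frac{\partial^n g}{\partial u_{r_1}^{\beta_1}\cdots\partial u_{r_n}^{\beta_n}}(y)$ and $\partial_x^{S+R+2n-1}\delta_+(x-y)$; every other factor either depends on $x$ alone (through the derivatives of $f$) or is a scalar, so I would pull all of them outside $\int_{S^1}\cdot\,dy$. Setting $N=S+R+2n-1$ and $h(y)=\frac{\partial^n g}{\partial u_{r_1}^{\beta_1}\cdots\partial u_{r_n}^{\beta_n}}(y)$, the antisymmetry relation turns $\partial_x^N\delta_+$ into $(-1)^N\partial_y^N\delta_+$, and $N$ integrations by parts transfer these derivatives onto $h$; the $N$ sign changes cancel the $(-1)^N$, so that
\[
\int_{S^1} h(y)\,\partial_x^N\delta_+(x-y)\,dy=\int_{S^1}\bigl(\partial_y^N h(y)\bigr)\delta_+(x-y)\,dy=\bigl(\partial_x^N h(x)\bigr)_+,
\]
the last step being the sifting identity applied to $\partial_y^N h$ followed by the relabelling $y\mapsto x$. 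Re-inserting $N=S+R+2n-1$ and $h=\frac{\partial^n g}{\partial u_{r_1}^{\beta_1}\cdots\partial u_{r_n}^{\beta_n}}$ into each summand then yields precisely the stated formula, with $\partial_x^{S+R+2n-1}$ correctly placed inside $(\,\cdot\,)_+$.

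The step that genuinely needs care, and which I expect to be the main obstacle, is the opening reduction $f(x)\star\overline g=\int_{S^1}[f(x)\star g(y)]\,dy$ together with the $n=0$ term. For the reduction one must check that applying the Moyal bidifferential operator and then extracting the $(iy)^{-1}$-coefficient commute: this holds because the operator differentiates only in the $q$-variables, which commute with coefficient extraction in $y$, while the constant-in-$q$ part subtracted in the definition of $\int\cdot\,dy$ is annihilated by every $q$-derivative and so can only influence the $n=0$ contribution. I would also confirm independence of the representative, i.e. that an element of ${\rm Im}\,\partial_y\oplus\mathbb{C}[[\epsilon]]\oplus\frac{\mathbb{C}[[\epsilon]]}{y}$ contributes nothing after integration: the $\partial_y$-exact part dies by the boundary-free integration by parts, and the residual constants are annihilated upon sifting against $\delta_+$. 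Finally, the $n=0$ summand must be read as the classical product $f(x)\overline g$ — the $\hbar^0$ part of $f(x)\star g(y)$ integrated in $y$ — the symbols $\partial_x^{-1}$ and $(-1)!$ appearing there being purely formal and present only to keep the single master formula uniform in $n$.
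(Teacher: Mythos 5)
Your proposal is correct and follows exactly the paper's (very terse) argument: integrate formula~\eqref{eq: star product in u} in $y$, use the sifting identity $\int_{S^1}\delta_{+}\left(x-y\right)h\left(y\right)dy=\left(h\left(x\right)\right)_{+}$, and transfer the $\partial_x$-derivatives via $\partial_{x}\delta_{+}=-\partial_{y}\delta_{+}$ and repeated boundary-free integration by parts. Your additional checks (independence of the representative of $\overline{g}$, commuting the Moyal operator with coefficient extraction, and the formal reading of the $n=0$ term) merely flesh out details the paper leaves implicit.
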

\begin{proof}
[Proof of the Proposition \ref{prop: star product f and g}]When acting on a singular differential polynomial, we have 
\[
\frac{\partial}{\partial p_{m}^\alpha}=\sum_{s\geq0}\partial_{x}^{s}\left(ix\right)^{m}\frac{\partial}{\partial u_{s}^\alpha}.
\]
Thus the definition of the star product translates to	

\begin{align*}
f\left(x\right)\star g\left(y\right) & =\sum_{n\geq0}\frac{\hbar^{n}}{n!}\sum_{\substack{s_{1},\dots,s_{n}\geq0\\
r_{1},\dots,r_{n}\geq0
}
}\frac{\partial^{n}f}{\partial u_{s_{1}}^{\alpha_1}\cdots\partial u_{s_{n}}^{\alpha_n}}\left(x\right)\frac{\partial^{n}g}{\partial u_{r_{1}}^{\beta_1}\cdots\partial u_{r_{n}}^{\beta_n}}\left(y\right)\\
 & \times\prod_{k=1}^{n}\left(\eta^{\alpha_k\beta_k}\sum_{m>0}\partial_{x}^{s_{k}}\left(ix\right)^{m-1}im\partial_{y}^{r_{k}}\left(iy\right)^{-m-1}\right).
\end{align*}
The purpose of the rest of the proof is to show following equality
\begin{equation}
\prod_{k=1}^{n}\left(\sum_{m>0}\partial_{x}^{s_{k}}\left(ix\right)^{m-1}im\partial_{y}^{r_{k}}\left(iy\right)^{-m-1}\right) 
	=
	\frac{\prod_{k=1}^{n}\left(s_{k}+r_{k}+1\right)!}{\left(S+R+2n-1\right)!}\left(-1\right)^{R}\left(-i\right)^{n-1}\partial_{x}^{S+R+2n-1}\delta_{+}\left(x-y\right).\label{eq: delta_plus}
\end{equation}
First using $\partial_{x}^{s}\left(ix\right)^{m}=m^{\underline{s}}i^{s}\left(ix\right)^{m-s}$ and similarly for the variable $y$ we rewrite each sum on the LHS as 
\[
\sum_{m>0}\partial_{x}^{s_{k}}\left(ix\right)^{m-1}im\partial_{y}^{r_{k}}\left(iy\right)^{-m-1}=\left(-1\right)^{r_{k}}i^{s_{k}+r_{k}+1}\sum_{m>0}\left(m+r_{k}\right)^{\underline{s_{k}+r_{k}+1}}\left(ix\right)^{m-1-s_{k}}\left(iy\right)^{-m-1-r_{k}}.
\]
Thus we rewrite the LHS of Eq.~(\ref{eq: delta_plus}) as 
\begin{align}
{\rm LHS} & =\sum_{M>0}\left(-1\right)^{R}i^{S+R+n}\left(\sum_{\substack{m_{1},\dots,m_{n}>0\\
\sum_{j=1}^{n}m_{j}=M
}
} \prod_{k=1}^n \left(m_{k}+r_{k}\right)^{\underline{s_{k}+r_{k}+1}}\right)\left(ix\right)^{M-n-S}\left(iy\right)^{-M-n-R}\label{eq: LHS equals blabla}
\end{align}
We now study the term in parenthesis. After the change of indices in summations $m_{i}+r_{i}=a_{i}$, we see that this term is the polynomial studied in Lemma~\ref{lem: Ehrhart polynomials}:
\begin{align*}
\sum_{\substack{m_{1},\dots,m_{n}>0\\
\sum_{j=1}^{n}m_{j}=M
}
} \prod_{k=1}^n \left(m_{k}+r_{k}\right)^{\underline{s_{k}+r_{k}+1}}& =C^{s_{1}+r_{1}+1,\dots,s_{n}+r_{n}+1}\left(M+R\right)\\
 & \overset{{\rm Lemma\,\ref{lem: Ehrhart polynomials}}}{=}\frac{\prod_{k=1}^{n}\left(s_{k}+r_{k}+1\right)!}{\left(S+R+2n-1\right)!}\left(M+R+n-1\right)^{^{\underline{S+R+2n-1}}}
\end{align*}
Substituting this expression in Eq.~(\ref{eq: LHS equals blabla}), we see that the factorial polynomial arises as a $x$-derivative $\partial_{x}^{S+R+2n-1}$, we get
\[
{\rm LHS}=\sum_{M>0}\left(-1\right)^{R}\left(-i\right)^{n-1}\frac{\prod_{k=1}^{n}\left(s_{k}+r_{k}+1\right)!}{\left(S+R+2n-1\right)!}\partial_{x}^{S+R+2n-1}\left(ix\right)^{M+R+n-1}\left(iy\right)^{-M-n-R}.
\]
After the change of index in the sum $A=M+R+n$ and remarking that the $x$-derivatives kills the first terms of the sum, we can rewrite this expression using the function $\delta_{+}\left(x-y\right)$. We obtain the RHS of Eq.~\ref{eq: delta_plus}. This proves the statement.
\end{proof}

\paragraph{Computing the commutator. } We repeat the same computation for $-g\left(y\right)\star f\left(x\right)$. We get a similar expression, but this time the series $\delta_{+}\left(x-y\right)$ is replaced by the series $\delta_{-}\left(x-y\right)=\sum_{a\leq0}\frac{\left(ix\right)^{a-1}}{\left(iy\right)^{a}}$. This series satisfies $\int f\left(y\right)\delta_{-}\left(x-y\right)dy=f_{-}\left(x\right)=\sum_{m\leq-1}f_{m}x^{m}$ and we get a formula for $-\overline{g}\star f\left(x\right)$. Adding the two contribution of $f\left(x\right)\star\overline{g}$ and of $-\overline{g}\star f\left(x\right)$ one obtain the formula of the commutator of Proposition~\ref{prop: commutator formula}. 

\section{The meromorphic differential hierarchy}

\subsection{Strata of differentials}

Let $g$ and $n$ be two nonnegative integers satisfying $2g-2+n>0$. We denote by $\mathcal{M}_{g,n}$ the moduli space of smooth curves of genus $g$ with $n$ marked points, and denote by $\overline{\mathcal{M}}_{g,n}$ the Deligne-Mumford compactification of $\mathcal{M}_{g,n}$. We choose to index the $n$ marked points from $0$ to $n-1$. 

Let $\vec{m}=\left(m_{0},\dots,m_{n-1}\right)\in\mathbb{Z}^{n}$ satisfying 
\[
\sum_{i=0}^{n-1}m_{i}=2g-2.
\]
We define the \emph{stratum of differentials of type $\vec{m}$} by
\[
\mathcal{H}_{g}\left(m_{0},\dots,m_{n-1}\right)=\left\{ \left[C,x_{0},\dots,x_{n-1}\right]\in\mathcal{M}_{g,n}\vert\omega_{C}=\mathcal{O}_{C}\left(\sum_{i=0}^{n-1}m_{i}x_{i}\right)\right\} .
\]
It follows from the line bundle condition that a smooth pointed curve belongs $\mathcal{H}_{g}\left(m_{0},\dots,m_{n-1}\right)$ if and only if there exists a meromorphic differential on the curve whose zeros and poles occur only at the point $x_{0},\ldots,x_{n-1}$ with orders $m_{0},\ldots, m_{n-1}$. When $m_i=0$ the marked point $x_i$ is neither a zero nor a pole, meaning that the meromorphic differential is holomorphic and nonzero in a neighborhood of $x_i$. If $\vec{m}$ contains only nonnegative integers, the stratum is called holomorphic and its codimension is $g-1$, otherwise, the stratum is called meromorphic and its codimension is $g$, see \cite{farkas2018moduli}. Finally, we denote by $\overline{\mathcal{H}}_{g}\left(m_{0},\dots,m_{n-1}\right)$ the closure of $\mathcal{H}_{g}\left(m_{0},\dots,m_{n-1}\right)$ in $\overline{\mathcal{M}}_{g,n}$.

\subsection{Meromorphic differential hierarchy}

The constructions and results of this section work for any Cohomological Field Theory with unit (CohFT). We fix once and for all the notations for a CohFT and refer, for example, to \cite{pandharipande2018cohomological} for a definition.

\begin{notation} 
\label{notation CohFT}
We fix a triple $\left(V,\eta,\bm{1}\right)$, where $(V,\eta)$ is an $N$-dimensional $\mathbb{C}$-vector space together with a nondegenerate bilinear form as already considered in Notation \ref{notation: V}, and  $\bm{1}\in V$ is a special vector. We denote by $c_{g,n}:V^{\otimes n}\rightarrow H^{*}\left(\overline{\mathcal{M}}_{g,n}\right)$ a cohomological field theory (with unit) with phase space $\left(V,\eta,\bm{1}\right)$. Let $\{e_\alpha\}_{1\leq \alpha\leq N}$ be a basis for $V$ and let $\bm{1}=A^\alpha e_\alpha$ for some $A^\alpha \in \mathbb{C}$. Then we use the index $\bm{1}$ as a lower index in component notations for tensors and formal variables associated to that choice of basis to denote a linear combination of the corresponding quantities with index $\alpha$ and coefficients $A^\alpha$, for instance $\frac{\partial}{\partial q^{\bm{1}}_m} := A^\alpha \frac{\partial}{\partial q^\alpha_m}$.
\end{notation}
\begin{defn}
Let $c_{g,n}:V^{\otimes n}\rightarrow H^{*}\left(\overline{\mathcal{M}}_{g,n}\right)$ be a CohFT as introduced in Notation \ref{notation: V}. Fix $d\geq-1$ and $1\leq\alpha\leq N$. For $1\leq \alpha\leq N $ and $d\geq 0$, the \emph{hamiltonian densities of the quantum meromorphic differential (MD) hierarchy} are the elements of $\mathcal{B}\left[\left[\hbar\right]\right]$ defined by 
\begin{align*}
H_{\alpha,d}\left(x\right) & =\sum_{\substack{g,n\geq0\\
2g+n>0
}
}\frac{\left(i\hbar\right)^{g}}{n!}\sum_{\substack{m_{1},\dots,m_{n}\in\mathbb{Z}\\
1\leq\alpha_{1},\dots,\alpha_{n}\leq n
}
}\quad q_{m_{1}}^{\alpha_1}\cdots q_{m_{n}}^{\alpha_n}\left(ix\right)^{\sum_{i=1}^{n}m_{i}-2g}\\
 & \times\left(\int_{\overline{\mathcal{H}}_{g}\left(-1,m_{1},\dots,m_{n},2g-1-\sum_{i=1}^{n}m_{i}\right)}\Lambda\left(\frac{-\epsilon^{2}}{i\hbar}\right)\psi_{0}^{d+1}c_{g,n+2}\left(e_\alpha\otimes \otimes_{i=1}^n e_{\alpha_{i}} \otimes \bm{1}\right)\right),
\end{align*}
where $\Lambda\left(s\right)=1+s\lambda_{1}+\cdots+s^{g}\lambda_{g}$ is Chern polynomial of the Hodge bundle, and where $\psi_{i}$ is the first Chern class of the line bundle over $\overline{\mathcal{M}}_{g,n}$ whose fiber at a marked curve is the cotangent line at its $i$-th marked point.

\begin{notation} 
In the rest of the text, we denote by $H_{d}$ the hamiltonian density of the quantum meromorphic differential hierarchy associated to the trivial CohFT. We also set $h_{d}:=H_{d}\vert_{\hbar=0}$ for its classical counterpart.
\end{notation}
\end{defn}
\begin{rem}
For a generic CohFT, the intersection numbers involved in $H_{\alpha,d}$ are not polynomial in $m_{1},\dots,m_{n}$ but piecewise polynomial. Thus $H_{\alpha,d}\left(x\right)$ is an element of $\mathcal{B}\left[\left[\hbar\right]\right]$ with no interpretation as differential polynomial in the $u$-variables. For the trivial CohFT (see Section~\ref{sec: main results}) and certain CohFTs that we plan to describe in an upcoming paper, these intersection numbers are polynomials of degree $2g$, thus the hamiltonian densities are singular differential polynomials given by
\begin{align}
H_{\alpha,d}\left(x\right) & =\sum_{\substack{g,n\geq0\\
2g+n>0
}
}\frac{\left(i\hbar\right)^{g}}{n!}\sum_{\substack{s_{1},\dots,s_{n}\geq0\\
s_{1}+\cdots+s_{n}\leq2g
}
}\sum_{1\leq\alpha_{1},\dots,\alpha_{n}\leq n}u_{s_{1}}^{\alpha_1}\cdots u_{s_{n}}^{\alpha_n}\frac{\left(-1\right)^{g}}{x^{2g-\sum s_{i}}} \label{:eq: Hd u variables} \\
 & \times\left[m_{1}^{\underline{s_{1}}}\cdots m_{n}^{\underline{s_{n}}}\right]\left(\int_{\overline{\mathcal{H}}_{g}\left(-1,m_{1},\dots,m_{n},2g-1-\sum_{i=1}^{n}m_{i}\right)}\Lambda\left(\frac{-\epsilon^{2}}{i\hbar}\right)\psi_{0}^{d+1}c_{g,n+2}\left(e_\alpha\otimes \otimes_{i=1}^n e_{\alpha_{i}} \otimes \bm{1}\right)\right) \nonumber,
\end{align}
where the notation $\left[m_{1}^{\underline{s_{1}}}\cdots m_{n}^{\underline{s_{n}}}\right]$ means first writing the polynomial in terms of factorial monomials and then extracting the coefficient of $m_{1}^{\underline{s_{1}}}\cdots m_{n}^{\underline{s_{n}}}$. In the special case of the trivial CohFT, we even prove (Theorem~\ref{thm: main theorem}) that the densitis $H_d$, for $d\geq 0$, are actual (not singular) differential polynomials.
\end{rem}
The goal of the rest of this section is to prove that the quantum hierarchy we defined is integrable and tau-symmetric. This is the content of the following proposition. 
\begin{prop}
\label{prop: integrability and tau for meromorphic}The hamiltonian densities satisfy two conditions:
\begin{itemize}
\item (integrability) let $d_{1},d_{2}\geq-1$ and $1\leq\alpha_{1},\alpha_{2}\leq N$, we have 
\[
\left[\overline{H}_{\alpha_{1},d_{1}},\overline{H}_{\alpha_{2},d_{2}}\right]=0,
\]
\item (tau symmetry) let $d_{1},d_{2}\geq0$ and $1\leq\alpha_{1},\alpha_{1}\leq N$, we have
\[
\left[H_{d_{1}-1,\alpha_{1}},\overline{H}_{d_{2},\alpha_{2}}\right]=\left[H_{d_{2}-1,\alpha_{2}},\overline{H}_{d_{1},\alpha_{1}}\right].
\]
\end{itemize}
\end{prop}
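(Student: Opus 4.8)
The plan is to follow the strategy of \cite{BR2016} for the quantum DR hierarchy, but to run the whole argument in the $q$-variables so that it covers an \emph{arbitrary} CohFT: as noted in the remark preceding the proposition, for a generic CohFT the densities $H_{\alpha,d}$ lie in $\mathcal{B}[[\hbar]]$ with no differential-polynomial realization, so the $u$-variable formula of Proposition~\ref{prop: commutator formula} is not directly available and one must work with the Moyal star product itself. The starting point, and the key structural observation, is that each application of the bidifferential operator $\sum_{k>0} i\hbar k\,\eta^{\alpha\beta}\overleftarrow{\partial/\partial q^{\alpha}_{k-1}}\,\overrightarrow{\partial/\partial q^{\beta}_{-k-1}}$ contracts a variable $q^{\alpha}_{k-1}$ of the first density against a variable $q^{\beta}_{-k-1}$ of the second, i.e. it identifies a marked point of differential order $k-1$ with one of order $-k-1$. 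Since $(k-1)+(-k-1)=-2$, these are exactly the orders compatible with a node of a twisted/meromorphic differential, and the weight $k=(k-1)+1$ is precisely the number of prongs at such a node in the theory of multi-scale differentials. Thus the combinatorics of the star product is already tailored to the boundary geometry of the strata $\overline{\mathcal{H}}_g$, and $k=0$ (two simple poles) is excluded automatically by the condition $k>0$.

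Concretely, I would expand $\overline{H}_{\alpha_1,d_1}\star\overline{H}_{\alpha_2,d_2}$ and read the $\hbar^n$ term as the gluing of a curve contributing to the first density to a curve contributing to the second along $n$ nodes of matched orders $(k_j-1,-k_j-1)$, the uncontracted $q$-variables becoming external legs and the two distinguished simple-pole points (carrying $e_{\alpha_i}$, $\psi_0^{d_i+1}$) together with the two unit insertions $\bm{1}$ surviving. The decisive cohomological input, playing the role that the splitting property of the DR cycle plays in \cite{BR2016}, is the compatibility of the closure $\overline{\mathcal{H}}_g$ with the boundary gluing maps: the pushforward along $\xi:\overline{\mathcal{M}}_{g_1,n_1+1}\times\overline{\mathcal{M}}_{g_2,n_2+1}\to\overline{\mathcal{M}}_{g,n}$ of products of strata classes, summed over all node orders $(a,-a-2)$, reproduces the restriction of the glued strata class, with the prong multiplicities matching the factors $i\hbar k$. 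Combined with the multiplicativity of the Hodge bundle under $\xi$ (so that $\Lambda(-\epsilon^2/i\hbar)$ and the genus/$\hbar$-grading behave correctly under gluing) and with the fact that $\psi_0$ sits on an uncontracted leg, this rewrites the star product as a single integral over a boundary stratum of a larger moduli space.

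For integrability, I would then show that $\int[\,H_{\alpha_1,d_1}(x),\overline{H}_{\alpha_2,d_2}\,]\,dx$ vanishes. After the splitting, the commutator becomes an integral over the glued geometry carrying $\psi_0^{d_1+1}$ and $\psi_0^{d_2+1}$ on the two distinguished points; exchanging the two halves is a relabeling of the glued curve, and the only asymmetry between $\overline{H}_{\alpha_1,d_1}\star\overline{H}_{\alpha_2,d_2}$ and $\overline{H}_{\alpha_2,d_2}\star\overline{H}_{\alpha_1,d_1}$ is the splitting of the contracted mode into its $\delta_+$ and $\delta_-$ parts (Proposition~\ref{prop: star product f and g}), whose sum realizes the full node. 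Using that $\mathrm{Im}\,\partial_x\subset\mathrm{Ker}\int\cdot\,dx$ (integration by parts on the $\partial_x^{S+R+2n-1}$ appearing in Proposition~\ref{prop: commutator formula}) together with the symmetry of the glued integral in the $\psi$-decorated legs, I expect the antisymmetric commutator to collapse to zero, the cases $d_i=-1$ being handled separately via the unit axiom relating the $\bm{1}$-insertion to forgetting a point. Tau symmetry uses the identical gluing picture, except that one density is not integrated, so a distinguished point carrying $\psi_0^{d_1}$ (from $d_1-1$) survives; the asserted equality then amounts to the symmetry of a two-point integral with $\psi$-powers $d_1$ and $d_2+1$ versus $d_2$ and $d_1+1$ on the two distinguished points, which follows from the same boundary splitting together with a dilaton/string-type relation exchanging one power of $\psi$ across the node.

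The hard part will be establishing the boundary splitting of $\overline{\mathcal{H}}_g$ with the correct normalization. Unlike the DR cycle, whose restriction to the boundary is a clean product of lower DR cycles, the degeneration of strata of meromorphic differentials is governed by multi-scale differentials, and the precise multiplicities—the per-node prong factor $k$, the contribution of the scaling/level parameters, and the automorphism factors—must be matched term by term against the coefficients $i\hbar k\,\eta^{\alpha\beta}$ and the falling-factorial bookkeeping produced by Lemma~\ref{lem: Ehrhart polynomials} and Proposition~\ref{prop: commutator formula}. A secondary difficulty is the genuinely quantum, multi-node part ($n\geq2$) of the star product, where several nodes are contracted at once and the glued genus is raised accordingly: one must check that the symmetry factors $1/n!$ and the iterated gluing are compatible with the stable-graph combinatorics, and that the two uncontracted unit insertions do not obstruct the symmetrization used for integrability and tau symmetry.
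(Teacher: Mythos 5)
Your overall framing is the right one and matches the paper's: work in the $q$-variables, read each contraction $q^{\alpha}_{k-1}\leftrightarrow q^{\beta}_{-k-1}$ in the star product as a gluing of a zero of order $k-1$ to a pole of order $-k-1$ with prong weight $k$, and reduce everything to a splitting property of the strata classes (the $n$-fold contractions of the exponential matching the $p$-node terms with the $\frac{\prod k_i}{p!}$ factor). But there is a genuine gap at the decisive step. The cohomological input you posit --- that the pushforward of products of strata classes along the gluing maps, summed over node orders, ``reproduces the restriction of the glued strata class'' --- is not the relevant statement, and it is not what makes the commutator vanish. The input the paper uses is Sauvaget's splitting formula (Theorem~6(2) of \cite{sauvaget2019cohomology}, extended to allow $\psi$-classes on poles via \cite{costantini2022chern}): the class $\bigl(\left(m_{s}+1\right)\psi_{s}-\left(m_{t}+1\right)\psi_{t}\bigr)\,\overline{\mathcal{H}}_{g}\left(m_{1},\dots,m_{n}\right)$ equals exactly the signed sum over boundary gluings that the commutator produces. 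Integrability then follows because both distinguished points $s,t$ in the commutator are the \emph{simple poles} of the two densities, so $m_s=m_t=-1$ and the left-hand side is identically zero --- hence so is the commutator. Your proposed substitute mechanism (``exchanging the two halves is a relabeling'' plus integration by parts and ``symmetry of the glued integral'') does not establish this: the two orderings $\overline{H}_{\alpha_1,d_1}\star\overline{H}_{\alpha_2,d_2}$ and $\overline{H}_{\alpha_2,d_2}\star\overline{H}_{\alpha_1,d_1}$ glue zeros of one side to poles of the other in opposite directions, and their difference is a genuinely nontrivial sum of boundary classes whose vanishing is the content of the cited theorem, not a formal antisymmetry. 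You correctly flag the splitting as ``the hard part,'' but the route you sketch (a boundary-restriction formula for $\overline{\mathcal{H}}_g$ with prong multiplicities) is not the statement that closes the argument.

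For tau symmetry the paper's route is also considerably simpler than the two-point $\psi$-power symmetry you sketch: one first checks, via the forgetful map and the pull-back property of the $\psi$-class, that $\frac{\delta}{\delta u^{\bm{1}}}\overline{H}_{\alpha,d}=H_{\alpha,d-1}$, and then applies the derivation $\frac{\delta}{\delta u^{\bm{1}}}$ to the already-proved integrability relation $\bigl[\overline{H}_{\alpha_{1},d_{1}},\overline{H}_{\alpha_{2},d_{2}}\bigr]=0$; the Leibniz rule immediately yields $\bigl[H_{\alpha_1,d_1-1},\overline{H}_{\alpha_2,d_2}\bigr]=\bigl[H_{\alpha_2,d_2-1},\overline{H}_{\alpha_1,d_1}\bigr]$. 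No further geometric input is needed. I would recommend replacing your dilaton/string-type argument across the node with this formal derivation, and replacing your boundary-splitting ansatz with the precise $\psi$-class relation above.
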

\begin{rem}
	This tau-symmetric quantum integrable hierarchy reduces to a classical tau-symmetric integrable hierarchy  with densities given by $h_{d,\alpha}:=H_{d,\alpha}\vert_{\hbar=0}$ and with the Poisson bracket $\{\cdot,\cdot\}$ introduced in Section~\ref{subsec: Poisson bracket}. The integrability and tau-symmetry directly follows from the equations of Proposition~\ref{prop: integrability and tau for meromorphic} upon dividing by $\hbar$ and then evaluating at $\hbar=0$.
\end{rem}

The proof is based on a splitting property of strata of differentials when intersected with a $\psi$-class due to Sauvaget. This formula, as stated in \cite{sauvaget2019cohomology}, only allows to insert $\psi$-classes on zeros. The more general formula allowing $\psi$-classes on poles essentially follows from his work and is stated in a more general context in \cite{costantini2022chern}. We restate here the formula of Sauvaget, with $\psi$-classes on poles. We write this formula as an equality of classes in $H^{*}\left(\overline{\mathcal{M}}_{g,n}\right)$, and write explicitly the summation over graphs  as it is more convenient for our purpose.

 \begin{thm*}
[\cite{sauvaget2019cohomology}, Theorem 6(2)] Fix $s$ and $t$ in $\left\{ 0,\dots,n-1\right\} $ and suppose $m_{s} \in \mathbb{Z}$ and $m_{t} \in \mathbb{Z}$. We have
\begin{align*}
 & \left(\left(m_{s}+1\right)\psi_{s}-\left(m_{t}+1\right)\psi_{t}\right)\overline{\mathcal{H}}_{g}\left(m_{1},\dots,m_{n}\right)=\\
 & \sum_{s\in I,t\in J}\sum_{p\geq1}\sum_{\substack{g_{1}\geq0,g_{2}\geq0\\
g_{1}+g_{2}+p-1=g
}
}\sum_{k_{1},\dots,k_{p}>0}\frac{\prod_{i=1}^{p}k_{i}}{p!}\overline{\mathcal{H}}_{g_{1}}\left(M_{I},k_{1}-1,\dots,k_{p}-1\right)\boxtimes\overline{\mathcal{H}}_{g_{2}}\left(M_{J},-k_{1}-1,\dots,-k_{p}-1\right)\\
 & -\sum_{t\in I,s\in J}\sum_{p\geq1}\sum_{g_{1}+g_{2}+p-1=g}\sum_{k_{1},\dots,k_{p}>0}\frac{\prod_{i=1}^{p}k_{i}}{p!}\overline{\mathcal{H}}_{g_{1}}\left(M_{I},k_{1}-1,\dots,k_{p}-1\right)\boxtimes\overline{\mathcal{H}}_{g_{2}}\left(M_{J},-k_{1}-1,\dots,-k_{p}-1\right),
\end{align*}
where the first sum is taken over $I\sqcup J=\left\{ 0,\dots,n-1\right\} $. We used $M_{I}$ to denote the list $\left(m_{i}\right)_{i\in I}$. The notation $\overline{\mathcal{H}}_{g_{1}}\left(M_{I},k_{1}-1,\dots,k_{p}-1\right)\boxtimes\overline{\mathcal{H}}_{g_{2}}\left(M_{J},-k_{1}-1,\dots,-k_{p}-1\right)$ stands for the class in $\overline{\mathcal{M}}_{g,n}$ obtained by glueing $\overline{\mathcal{H}}_{g_{1}}\left(M_{I},k_{1}-1,\dots,k_{p}-1\right)$ and $\overline{\mathcal{H}}_{g_{2}}\left(M_{J},-k_{1}-1,\dots,-k_{p}-1\right)$ at the $p$ last marked points.
\end{thm*}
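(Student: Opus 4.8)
The displayed statement is Sauvaget's relation (Theorem 6(2) of \cite{sauvaget2019cohomology}) rewritten so that the two distinguished points $s,t$ are allowed to carry poles, with the boundary contribution spelled out as an explicit sum over two-component graphs. The plan is therefore not to reprove Sauvaget's theorem from scratch but to isolate the geometric mechanism that produces it, check that this mechanism never uses the positivity of $m_s$ and $m_t$, and then match the resulting boundary class with the stated graph sum; the meromorphic input can also simply be quoted from the general Chern-class computation of \cite{costantini2022chern}.

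First I would establish the vanishing on the open stratum. Over $\mathcal{H}_g(m_1,\dots,m_n)$ the defining condition $\omega_C\cong\mathcal{O}_C(\sum_j m_j x_j)$ says that, on the universal curve $\pi$ with sections $\sigma_j$, the line bundle $\omega_\pi\otimes\mathcal{O}(-\sum_j m_j\sigma_j)$ is fibrewise trivial, hence equals $\pi^*\mathcal{L}$ for a line bundle $\mathcal{L}$ on the stratum (the ``scaling'' line spanned by the canonical differential). Pulling back along $\sigma_i$ and using $\sigma_i^*\mathcal{O}(\sigma_j)=\mathcal{O}$ for $j\neq i$ together with $c_1(\sigma_i^*\mathcal{O}(\sigma_i))=-\psi_i$, one finds $(m_i+1)\psi_i=c_1(\mathcal{L})$ on $\mathcal{H}_g(\vec m)$, independently of $i$. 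Consequently $(m_s+1)\psi_s-(m_t+1)\psi_t=0$ there. This computation is purely a statement about the dualizing sheaf and the sections, so it holds verbatim for any integers $m_s,m_t$, zeros or poles. It follows that the class $\left((m_s+1)\psi_s-(m_t+1)\psi_t\right)\overline{\mathcal{H}}_g(\vec m)$ is supported on the boundary $\overline{\mathcal{H}}_g(\vec m)\setminus\mathcal{H}_g(\vec m)$.

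To turn ``supported on the boundary'' into the explicit class I would pass to the (meromorphic) incidence-variety, resp. multi-scale, compactification of the stratum, on which a universal twisted differential exists. Its leading coefficient at $\sigma_s$ is a section $\ell_s$ of a line bundle whose first Chern class is $(m_s+1)\psi_s$, and similarly $\ell_t$ has $c_1=(m_t+1)\psi_t$; the scale-invariant comparison $\ell_s\otimes\ell_t^{-1}$ is then a rational section of a bundle with $c_1=(m_s+1)\psi_s-(m_t+1)\psi_t$ that is a nonzero constant on the open stratum. Its divisor is concentrated exactly on the two-level degenerations $C_1\cup C_2$, glued at $p$ nodes, where the differential is genuinely present on the side carrying the zeros $k_i-1\geq0$ and is rescaled to acquire the poles $-k_i-1$ on the other side. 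The two sums of the statement, with their opposite signs, correspond to the two ways of placing $s$ and $t$ on the two levels (equivalently $s\in I,\,t\in J$ versus $t\in I,\,s\in J$), the sign recording on which level the compared leading coefficient degenerates.

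The main obstacle is the boundary multiplicity computation, i.e. justifying the weight $\tfrac{\prod_{i=1}^p k_i}{p!}$ and the precise signs. The factor $k_i$ is the order to which $\ell_s\otimes\ell_t^{-1}$ vanishes (or blows up) when the $i$-th node is opened in a plumbing family with prescribed orders $k_i-1$ and $-k_i-1$, while $\tfrac1{p!}$ accounts for the symmetry among the $p$ indistinguishable nodes; this local plumbing analysis is the technical heart of \cite{sauvaget2019cohomology}. Since every ingredient of it -- the triviality of $\mathcal{L}$ on the interior, the two-level structure, and the smoothing of a node of a given order $k_i>0$ -- depends only on the node orders and not on the signs of $m_s,m_t$, the formula extends to the meromorphic case unchanged. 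In practice I would invoke the order-agnostic Chern-class formula of \cite{costantini2022chern}, and the only remaining work is the bookkeeping that reindexes its boundary contribution into the displayed sum over $I\sqcup J$, $p$ and $k_1,\dots,k_p$, matching the $\boxtimes$-gluing conventions.
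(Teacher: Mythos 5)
Your proposal is correct and follows essentially the same route as the paper: both ultimately rest on quoting the meromorphic/multi-scale generalization of Sauvaget's relation from \cite{costantini2022chern} (Proposition 8.2 there, pushed forward to $\overline{\mathcal{M}}_{g,n}$), after observing that the interior vanishing $(m_s+1)\psi_s=(m_t+1)\psi_t=c_1(\mathcal{L})$ and the two-level boundary analysis are insensitive to the signs of $m_s,m_t$. Your additional exposition of the mechanism (the line bundle $\mathcal{L}$, the leading-coefficient sections, the plumbing multiplicities $\prod k_i/p!$) is accurate but, as in the paper, the technical heart is deferred to the cited references rather than reproved.
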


\begin{proof}
	The pushforward of the formula of Proposition 8.2 in \cite{costantini2022chern} by the forgetful map forgetting the multi-scale differential and keeping the pointed stable curve generalises Theorem~6(1) of \cite{sauvaget2019cohomology} by allowing $\psi$-classes on poles. Then, just as Theorem~6(1) implies Theorem~6(2) in \cite{sauvaget2019cohomology}, we deduce the version  Theorem~6(2) of \cite{sauvaget2019cohomology} with $\psi$-classes on the poles which is presented here. 
\end{proof}
The proofs of the two assertions of Proposition~\ref{prop: integrability and tau for meromorphic} are similar to the integrability proof of \cite{BR2016} and tau symmetry proof in \cite{BGDR2}. We adapt their ideas in our context.
\begin{proof}
$\left(1\right)$ First, we justify that the hamiltonian $\overline{H}_{\alpha,d}$ is given by 
\begin{align*}
\overline{H}_{\alpha,d} & =\sum_{\substack{g,n\geq0\\
2g+n>0
}
}\frac{\left(i\hbar\right)^{g}}{n!}\sum_{\substack{m_{1},\dots,m_{n}\in\mathbb{Z}\\
\sum m_{i}=2g-1\\
1\leq\beta_{1},\dots,\beta_{n}\leq n
}
}\left(\int_{\overline{\mathcal{H}}_{g}\left(-1,m_{1},\dots,m_{n}\right)}\Lambda\left(\frac{-\epsilon^{2}}{i\hbar}\right)\psi_{1}^{d}c_{g,n+1}\left(e_\alpha\otimes \otimes_{i=1}^n e_{\alpha_{i}} \right)\right)q_{m_{1}}^{\alpha_1}\cdots q_{m_{n}}^{\alpha_n}
\end{align*}
Indeed, extracting the coefficient of $\left(ix\right)^{-1}$ in the generating series $H_{\alpha,d}\left(x\right)$ imposes a zero/pole of order zero, which simply means an unconstrained marked point on the stable curve, as the last marking of each stratum of differentials involved in the formula. Now we have
\begin{equation*}
\begin{split}
\overline{\mathcal{H}}_{g}\left(-1,m_{1},\dots,m_{n},0\right)\Lambda\left(\frac{-\epsilon^{2}}{i\hbar}\right)& c_{g,n+2}\left(e_\alpha\otimes \otimes_{i=1}^n e_{\alpha_{i}} \otimes \bm{1}\right)= \\
& \pi^{*}\left(\overline{\mathcal{H}}_{g}\left(-1,m_{1},\dots,m_{n}\right)\Lambda\left(\frac{-\epsilon^{2}}{i\hbar}\right)c_{g,n+1}\left(e_\alpha\otimes \otimes_{i=1}^n e_{\alpha_{i}}\right)\right),
\end{split}
\end{equation*}
where $\pi:\overline{\mathcal{M}}_{g,n+2}\rightarrow\overline{\mathcal{M}}_{g,n+1}$ is the map forgetting the last marked point. We conclude using the pull-back property of the $\psi$-class. 

Then, we remark that the intersection numbers involved in the commutator
\[
\overline{H}_{\alpha_{1},d_{1}}\star\overline{H}_{\alpha_{2},d_{2}}-\overline{H}_{\alpha_{2},d_{2}}\star\overline{H}_{\alpha_{1},d_{1}}.
\]
correspond to the RHS of Sauvaget formula in case the psi-classes are at simple poles. In that case the LHS of Sauvaget's formula vanishes, and so does the commutator.

$\left(2\right)$ 
Recall that the variational derivative $\frac{\delta}{\delta u^\alpha} : \overline{\mathcal{B}}[[\hbar]] \to \mathcal{B}[[\hbar]]$ is defined as $\frac{\delta \overline{f} }{\delta u^\alpha} = \sum_{m\in\mathbb{Z}}\left(ix\right)^{-m-1}\frac{\partial\overline{f}}{\partial q_{m}^{\alpha}}$. We apply this operator to the integrability condition, which yields
\[
\frac{\delta}{\delta u^{\bm{1}}}\left[\overline{H}_{\alpha_{1},d_{1}},\overline{H}_{\alpha_{2},d_{2}}\right]=\left[\frac{\delta}{\delta u^{\bm{1}}}\overline{H}_{\alpha_{1},d_{1}},\overline{H}_{\alpha_{2},d_{2}}\right]+\left[\overline{H}_{\alpha_{1},d_{1}},\frac{\delta}{\delta u^{\bm{1}}}\overline{H}_{\alpha_{2},d_{2}}\right]=0.
\]
Together with the fact that
\[
\frac{\delta}{\delta u^{\bm{1}}}\overline{H}_{\alpha_{1},d_{1}}=\sum_{m\in\mathbb{Z}}\left(ix\right)^{-m-1}\frac{\partial\overline{H}_{\alpha_{1},d_{1}}}{\partial p_{m}^{\bm{1}}}=H_{\alpha_{1},d_{1}-1},
\]
by the pull-back property of the $\psi$-class, this completes the proof.
\end{proof}

\section{The twisted double ramification hierarchies}

\subsection{Twisted double ramification cycles}

Let $g$ and $n$ be two nonnegative integers satisfying $2g-2+n>0$. Let $\vec{m}=\left(m_{0},\dots,m_{n-1}\right)\in\mathbb{Z}^{n}$ satisfying 
\[
\sum_{i=0}^{n-1}m_{i}=2g-2.
\]
In \cite[Section 1.1]{JPPZ17}, the authors define the $k$-twisted Pixton class ${\rm P}_{g}^{d,k}\left(A\right) \in H^{2d}(\overline{\mathcal{M}}_{g,n})$, for $d,k\geq0$ and $A=\left(a_{0},\dots,a_{n-1}\right)$ such that $\sum_{i=0}^{n-1}a_{i}=k\left(2g-2+n\right)$ via an explicit tautological class formula and then define the \emph{twisted double ramification} \emph{cycle} as 
\[
{\rm DR}_{g}^{1}\left(m_{0},\dots,m_{n-1}\right):=2^{-g}{\rm P}_{g}^{g,1}\left(m_{0}+1,\dots,m_{n-1}+1\right).
\]
In the meromorphic case, this class has the property of reproducing the weighted fundamental class of the moduli space of twisted canonical divisors of \cite{farkas2018moduli} which, in turn, contains as an open set $\mathcal{H}_g(m_0,\ldots,m_{n-1})$, but has in general irreducible components that do not lie in $\overline{\mathcal{H}}_g(m_0,\ldots,m_{n-1})$. Morover the class ${\rm DR}_g^1(m_0,\ldots,m_{n-1})$ is polynomial of degree $2g$ in $m_0,\ldots,m_{n-1}$ as proved in \cite{spelier2024polynomiality,pixton2023polynomiality}.

\subsection{Twisted double ramification hierarchies}
\begin{defn}
Given a CohFT $c_{g,n}:V^{\otimes n}\rightarrow H^{*}\left(\overline{\mathcal{M}}_{g,n}\right)$ with a unit as in Notation \ref{notation: V}, we define the \emph{hamiltonian densities of the quantum ${\rm DR}^{1}$  hierarchy} by
\begin{align*}
H_{\alpha,d}^{{\rm DR}^{1}}\left(x\right) & =\sum_{\substack{g,n\geq0\\
2g+m>0
}
}\frac{\left(i\hbar\right)^{g}}{n!}\sum_{\substack{m_{1},\dots,m_{n}\in\mathbb{Z}\\
1\leq\alpha_{1},\dots,\alpha_{n}\leq n
}
}\quad q_{m_{1}}^{\alpha_1}\cdots q_{m_{n}}^{\alpha_n}\left(ix\right)^{\sum_{i=1}^{n}m_{i}-2g}\\
 & \times\left(\int_{{\rm DR}_{g}^{1}\left(-1,m_{1},\dots,m_{n},2g-1-\sum_{i=1}^{n}m_{i}\right)}\Lambda\left(\frac{-\epsilon^{2}}{i\hbar}\right)\psi_{1}^{d+1}c_{g,n+2}\left(e_\alpha\otimes \otimes_{i=1}^n e_{\alpha_{i}} \otimes \bm{1}\right)\right).
\end{align*}
\end{defn}
Since ${\rm DR}_{g}^{1}\left(m_{1},\dots,m_{n}\right)$ is a polynomial in $m_{1},\dots,m_{n}$ of degree $2g$ as proved in \cite{spelier2024polynomiality,pixton2023polynomiality}, the hamiltonian densities $H_{\alpha,d}^{{\rm DR}^{1}}\left(x\right)$ are singular differential polynomials. 

The quantum ${\rm DR^{1}}$ hierarchies satisfy the same integrability condition and tau symmetry property than the meromorphic differential hierarchy. 
\begin{prop}
The hamiltonian densities satisfies two conditions:
\begin{itemize}
\item (integrability) let $d_{1},d_{2}\geq-1$ and $1\leq\alpha_{1},\alpha_{2}\leq N$, we have 
\[
\left[\overline{H}_{\alpha_{1},d_{1}}^{{\rm DR}^{1}},\overline{H}_{\alpha_{2},d_{2}}^{{\rm DR}^{1}}\right]=0,
\]
\item (tau-symmetry) let $d_{1},d_{2}\geq0$ and $1\leq\alpha_{1},\alpha_{1}\leq N$, we have (tau symmetry)
\[
\left[H_{d_{1}-1,\alpha_{1}}^{{\rm DR}^{1}},\overline{H}_{d_{2},\alpha_{2}}^{{\rm DR}^{1}}\right]=\left[H_{d_{2}-1,\alpha_{2}}^{{\rm DR}^{1}},\overline{H}_{d_{1},\alpha_{1}}^{{\rm DR}^{1}}\right].
\]
\end{itemize}
\end{prop}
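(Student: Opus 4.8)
The plan is to follow the exact same strategy used to prove Proposition~\ref{prop: integrability and tau for meromorphic} for the meromorphic differential hierarchy, replacing the stratum classes $\overline{\mathcal{H}}_g$ with the twisted double ramification cycles ${\rm DR}_g^1$. The entire argument rests on two structural facts: first, that ${\rm DR}_g^1(m_1,\dots,m_n)$ is polynomial of degree $2g$ in the $m_i$ (so the hamiltonian densities are genuine singular differential polynomials and the commutator formula of Proposition~\ref{prop: commutator formula} applies), and second, that ${\rm DR}_g^1$ satisfies a splitting formula when multiplied by the appropriate combination of $\psi$-classes, perfectly analogous to Sauvaget's formula for strata. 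The key input I would invoke is the $k=1$ specialization of the splitting/boundary-pushforward property of Pixton's twisted classes: intersecting ${\rm DR}_g^1(m_0,\dots,m_{n-1})$ with a difference $((m_s+1)\psi_s - (m_t+1)\psi_t)$ produces a signed sum over boundary gluings of lower ${\rm DR}^1$ cycles, with the same combinatorial weights $\prod k_i / p!$ as in the Sauvaget statement. This is exactly the twisted analogue that makes the DR-hierarchy machinery of \cite{BR2016} and \cite{BGDR2} go through.

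For integrability I would first rewrite $\overline{H}_{\alpha,d}^{{\rm DR}^1}$ by extracting the coefficient of $(ix)^{-1}$, which forces the last marked point to carry a zero of order $0$ (an unconstrained point), and then use the forgetful-map pullback property of ${\rm DR}_g^1$ together with the pullback property of the $\psi$-class to reduce the final marking to a $\psi_1^d$ insertion, exactly as in part (1) of the meromorphic proof. I would then expand the star-product commutator $\overline{H}_{\alpha_1,d_1}^{{\rm DR}^1}\star \overline{H}_{\alpha_2,d_2}^{{\rm DR}^1} - \overline{H}_{\alpha_2,d_2}^{{\rm DR}^1}\star \overline{H}_{\alpha_1,d_1}^{{\rm DR}^1}$ using Proposition~\ref{prop: commutator formula}, observe that the $q$-variable pairings match the boundary-gluing terms on the right-hand side of the twisted splitting formula evaluated with $\psi$-classes inserted at two simple poles (the two glued markings of order $-1$), and conclude that the commutator equals a class of the form $((m_s+1)\psi_s-(m_t+1)\psi_t){\rm DR}_g^1$ with $m_s=m_t=-1$, which vanishes identically since the prefactor is zero.

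For tau-symmetry I would apply the variational derivative $\frac{\delta}{\delta u^{\bm 1}}$ to the integrability relation, using the Leibniz property of the quantum bracket with respect to $\frac{\delta}{\delta u^{\bm 1}}$ to obtain
\[
\left[\tfrac{\delta}{\delta u^{\bm 1}}\overline{H}_{\alpha_1,d_1}^{{\rm DR}^1},\overline{H}_{\alpha_2,d_2}^{{\rm DR}^1}\right]+\left[\overline{H}_{\alpha_1,d_1}^{{\rm DR}^1},\tfrac{\delta}{\delta u^{\bm 1}}\overline{H}_{\alpha_2,d_2}^{{\rm DR}^1}\right]=0.
\]
The crucial identity $\frac{\delta}{\delta u^{\bm 1}}\overline{H}_{\alpha,d}^{{\rm DR}^1}=H_{\alpha,d-1}^{{\rm DR}^1}$ follows from the dilaton/unit pullback property of ${\rm DR}_g^1$ under the forgetful map together with the $\psi$-class pullback relation, dropping the $\psi$-power by one unit; substituting this into the displayed equation yields the symmetric relation $[H_{d_1-1,\alpha_1}^{{\rm DR}^1},\overline{H}_{d_2,\alpha_2}^{{\rm DR}^1}]=[H_{d_2-1,\alpha_2}^{{\rm DR}^1},\overline{H}_{d_1,\alpha_1}^{{\rm DR}^1}]$.

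The main obstacle, and the only genuinely new ingredient compared to the meromorphic case, is establishing that ${\rm DR}_g^1$ satisfies the precise $\psi$-splitting formula with the stated weights and the boundary-gluing structure. For strata this is Sauvaget's theorem (extended to poles via \cite{costantini2022chern}); for the twisted DR cycle one must extract the analogous statement from the tautological formula for Pixton's class in \cite{JPPZ17}, specialized to $k=1$. I expect this to follow from the known behavior of ${\rm P}_g^{d,k}$ under boundary restriction and the compatibility of the twisted DR cycle with gluing maps, but verifying that the combinatorial coefficients and the genus/ramification bookkeeping match the form required by the commutator computation is where the real work lies; this is presumably deferred to the long proofs in Chapters 7 and 8.
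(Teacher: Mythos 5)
Your proposal is correct and follows essentially the same route as the paper: reduce everything to a $\psi$-class splitting formula for ${\rm DR}_g^1$ analogous to Sauvaget's, then rerun verbatim the integrability and tau-symmetry arguments from the meromorphic case. The only difference is that the "main obstacle" you flag — establishing that splitting formula from Pixton's class — is not deferred to the later sections at all: the paper simply cites it as already proved in \cite{CSS} (Proposition 3.1 there), so no new work is needed beyond invoking that reference.
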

\begin{proof}
In \cite[Proposition 3.1]{CSS}, the authors give a splitting formula for the ${\rm DR}^{1}$ cycle. This formula is similar to the Sauvaget splitting formula for the strata of differentials. In particular, the proof of the integrability and the tau symmetry of the ${\rm DR}^{1}$ hierarchy directly follows from the splitting formula of \cite{CSS} using the arguments employed to prove the integrability and tau symmetry for the meromorphic differentials hierarchies.
\end{proof}

\section{Integrable hierarchies associated to the trivial CohFT}
\label{sec: main results}

In this section we state results on the meromorphic differential hierarchy and the twisted DR hierarchy for the trivial CohFT $c_{g,n}(\bm{1}^{\otimes n})=1 \in H^0(\overline{\mathcal{M}}_{g,n})$ for $g,n\geq 0$ such that $2g-2+n>0$, with phase space $V=\langle \bm{1}\rangle$, bilinear form $\eta$ given by $\eta(\bm{1},\bm{1})=1$ and unit $\bm{1}$. Since in this case $N=\dim V  =1$, we suppress the greek indices (usually running from $1$ to $N$) from all of our notations.

\subsection{Polynomial behaviour}

First, the intersection numbers involved in the density $H_d$ satisfy the following property. 

\begin{prop}
\label{prop: polynomiality intersection numbers} Fix $g,n\geq0$ such that $2g-1+n>0$, fix $0\leq l\leq g$ and let $d$ be a nonnegative integer. The function 
\[
\int_{\overline{\mathcal{H}}_{g}\left(2g-2-\sum_{i=1}^{n}m_{i},m_{1},\dots,m_{n}\right)}\psi_{0}^{d}\lambda_{l},
\]
such that $2g-2-\sum_{i=1}^{n}m_{i}<0$, is a polynomial in the variables $m_{1},\dots,m_{n}$ of degree $2g.$
\end{prop}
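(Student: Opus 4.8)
The plan is to prove the statement by induction on the complexity $c=2g-2+(n+1)$ of the stratum (recall it carries $n+1$ marked points), using the splitting formula of Sauvaget stated above as the main engine. First I would observe that the integral can only be nonzero when the degree of the integrand matches the dimension of the stratum, i.e. when $d+l=2g-2+n$; for all other values of $d$ it vanishes identically and the claim is trivial. So I may assume $d=2g-2+n-l$ and treat the finitely many configurations of minimal complexity directly. The genuine base case is $g=0$: there the stratum is meromorphic of codimension $g=0$, hence equals all of $\overline{\mathcal{M}}_{0,n+1}$, the Hodge class forces $l=0$, and the integral reduces to a pure $\psi$-intersection number on $\overline{\mathcal{M}}_{0,n+1}$, a constant independent of $m_1,\dots,m_n$, i.e. a polynomial of degree $0=2g$.

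For the inductive step (with $g\ge 1$ and $d\ge 1$) I would apply Sauvaget's formula to the distinguished pole $s=0$, of order $m_0=2g-2-\sum_{i=1}^n m_i$, and to some other marked point $t$, then cap the resulting identity with $\psi_0^{d-1}\lambda_l$ and integrate over $\overline{\mathcal{M}}_{g,n+1}$. This yields a relation of the schematic form $(m_0+1)\langle\psi_0^{d}\lambda_l\rangle=(m_t+1)\langle\psi_0^{d-1}\psi_t\lambda_l\rangle+\mathcal{R}$, where $\mathcal{R}$ collects the contributions of the boundary strata on the right-hand side of the splitting formula, and where $(m_0+1)=2g-1-\sum_{i=1}^n m_i$ and $(m_t+1)$ are explicit linear polynomials in the $m_i$.

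The boundary term $\mathcal{R}$ is where the induction feeds in. On each glued class $\overline{\mathcal{H}}_{g_1}(M_I,k_1-1,\dots,k_p-1)\boxtimes\overline{\mathcal{H}}_{g_2}(M_J,-k_1-1,\dots,-k_p-1)$ the Hodge class splits multiplicatively, $\Lambda=\Lambda_{1}\Lambda_{2}$, so that $\lambda_l=\sum_{l_1+l_2=l}\lambda_{l_1}\boxtimes\lambda_{l_2}$, while $\psi_0^{d-1}$ remains on the component carrying the marked point $0$. Each boundary contribution thus factors as a product of two integrals over strata of strictly smaller complexity, which by the inductive hypothesis are polynomials of degrees $\le 2g_1$ and $\le 2g_2$ in their respective order-variables. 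The decisive finiteness is that the order balance on each component fixes $\sum_{i=1}^p k_i$ to a value $K$ that is linear in the $m_i$; summing the product of these polynomials against the weight $\prod_{i=1}^p k_i$ over the simplex $\{k_i>0,\ \sum_i k_i=K\}$ produces, by the Ehrhart Lemma~\ref{lem: Ehrhart polynomials}, a polynomial in $K$ and hence in the $m_i$. This shows $\mathcal{R}$, and with it the whole right-hand side, is polynomial in $m_1,\dots,m_n$.

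Two points carry the real difficulty. The main obstacle is extracting $\langle\psi_0^d\lambda_l\rangle$ itself: the identity computes only $(m_0+1)$ times it and, moreover, moving a $\psi$ from point $0$ to point $t$ does \emph{not} lower the complexity, so the two $\psi$-integrals are same-complexity unknowns. I would therefore prove simultaneously, at each fixed complexity, the more general statement that $\int_{\overline{\mathcal{H}}_g}\prod_i\psi_i^{d_i}\lambda_l$ is polynomial of degree $\le 2g$, and show that the Sauvaget relations close up into a linear system (parametrized by the $m_i$) among the finitely many $\psi$-distributions of a given total degree, whose solution forces divisibility of the right-hand side by $(m_0+1)$, i.e. cancellation of the apparent pole at $m_0=-1$. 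The second point is the degree bookkeeping: combining the bounds $2g_1$ and $2g_2$ from the two components, the degree $p$ from the weight $\prod_i k_i$, and the degree shift $p-1$ from the Ehrhart summation, and using $g_1+g_2+p-1=g$, one checks that the total degree in the $m_i$ is at most $2g$, as claimed.
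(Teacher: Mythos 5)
Your approach is genuinely different from the paper's: the paper does not use Sauvaget's splitting formula here at all. Instead it invokes the Farkas--Pandharipande Conjecture A (proved in \cite{BHPSS2020pixton}), which expresses $\int_{{\rm DR}^1_g}\psi_0^{d+1}\Lambda(\mu)$ as the stratum integral plus a sum of the \emph{same} integrals in strictly lower genus with extra $(-2)$-insertions (Lemma~\ref{lem: Conjecture A for intersection numbers}); since the left-hand side is a polynomial of degree $2g$ by the polynomiality of ${\rm DR}^1$ \cite{PixtonZagier}, a one-line induction on the genus gives the result. That route completely sidesteps the difficulty your proposal runs into.

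That difficulty is a genuine gap. Sauvaget's formula only produces the combination $(m_0+1)\langle\psi_0^d\lambda_l\rangle-(m_t+1)\langle\psi_0^{d-1}\psi_t\lambda_l\rangle=\mathcal{R}$, and as you yourself note, the second term has the same complexity as the first, so nothing is directly computed. You propose to close this into a linear system among all $\psi$-distributions of a given total degree and to argue that its solution ``forces divisibility by $(m_0+1)$,'' but this is precisely the hard content of the proposition and you give no argument for it: solving a linear system whose coefficients are the linear forms $(m_i+1)$ generically yields \emph{rational} functions of $m_1,\dots,m_n$ (with poles along $m_i=-1$ and along the vanishing locus of the system's determinant), and you would need to prove both that the system is nondegenerate and that the apparent poles cancel. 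Neither is addressed, and it is not clear the system is even square or independent. A second, smaller gap: your inductive step requires $d\geq 1$, but the cases $d=0$, $g\geq 1$ with $l=2g-2+n$ (e.g.\ $\int_{\overline{\mathcal{H}}_1(-m_1,m_1)}\lambda_1$) are covered neither by your $g=0$ base case nor by the induction, and these integrals do depend nontrivially on the $m_i$. The parts of your argument that do work --- the dimension constraint, the multiplicative splitting of $\Lambda$ on the glued classes, and the Ehrhart-type summation over $k_1+\cdots+k_p=K$ giving polynomiality of $\mathcal{R}$ with the right degree count via $g_1+g_2+p-1=g$ --- are sound, but they only establish polynomiality of the right-hand side, not of the integral you want.
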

The proof of this proposition is given in Section~\ref{sec:Applications of FP conjecture}. 

It follows from this proposition that the hamiltonian density $H_{d}$ of the trivial meromorphic differential hierarchy is a singular differential polynomial. This hamiltonian density is actually a differential polynomial (not singular). This follows from the main theorem. 

\subsection{Main theorem}

We now formulate our main result. It describes a link between the intersection numbers involved in $H_d$ and some intersection numbers on the DR cycle. Fix a list of integers $(a_{1},\dots,a_{n})$ such that $\sum_{i=1}^{n}a_{i}=0$, we denote by
\[
{\rm DR}_{g}\left(a_{1},\dots,a_{n}\right)\in H_{2\left(2g-3+n\right)}\left(\overline{\mathcal{M}}_{g,n}\right)
\]
the \emph{double ramification cycle} and refer, for example, to \cite{JPPZ17} for a definition. We recall that intersection numbers of any tautological class with the DR cycle are polynomial in the variables $a_{1},\dots,a_{n}$ of degree $2g$, as proved in \cite{pixton2023polynomiality,spelier2024polynomiality} or in the appendix of \cite{BR2016}.
 By extracting the coefficient of $m_{1}^{\underline{s_{1}}}\cdots m_{n}^{\underline{s_{n}}}$ in a polynomial in $m_{1},\dots,m_{n}$ we mean first writing the polynomial in terms of factorial monomials and then extracting the coefficient of the corresponding monomial.
\begin{thm}
[Main theorem]\label{thm: main theorem} Fix $g,n\geq0$ such that $2g+n>0$, fix an integer $l$ such that $0\leq l\leq g$ and let $d$ be a nonnegative integer. Let $\left(s_{1},\dots,s_{n}\right)$ be a list of nonnegative integers such that $\sum_{i=1}^{n}s_{i}=2g$. Then the coefficient of $m_{1}^{\underline{s_{1}}}\cdots m_{n}^{\underline{s_{n}}}$ in the polynomial
\[
\int_{\overline{\mathcal{H}}_{g}\left(-1,m_{1},\dots,m_{n},2g-2-\sum m_{i}+1\right)}\psi_{0}^{d}\lambda_{l}
\]
vanishes if $\sum_{i \geq 0}^{m}s_i \neq 2g$, and when $\sum_{i \geq 0}^{m}s_i = 2g$ it is given by the coefficient of $a_{1}^{s_{1}}\cdots a_{n}^{s_{n}}$ in the polynomial
\[
\int_{\DR g{0,a_{1},\dots,a_{n}-\sum a_{i}}}\psi_{0}^{d}\lambda_{l}.
\]
In particular, $\int_{\overline{\mathcal{H}}_{g}\left(-1,m_{1},\dots,m_{n},2g-2-\sum m_{i}+1\right)}\psi_{0}^{d+1}\lambda_{l}$ is a homogenous factorial polynomial of degree $2g$.
\end{thm}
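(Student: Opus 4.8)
The plan is to realize both families of intersection numbers as the Hamiltonian densities of tau-symmetric integrable hierarchies attached to the trivial CohFT, and to identify the meromorphic differential density with the top-degree part of the (quantum) KdV density. I would begin with two reductions coming from polynomiality. By Proposition~\ref{prop: polynomiality intersection numbers} the left-hand integral is a polynomial in $m_1,\dots,m_n$ of degree $2g$, while, by the polynomiality of intersection numbers against the double ramification cycle, the right-hand integral is a polynomial in $a_1,\dots,a_n$ of degree $2g$. Because $m_1^{\underline{s_1}}\cdots m_n^{\underline{s_n}}=m_1^{s_1}\cdots m_n^{s_n}+(\text{lower order})$, the coefficient of a factorial monomial of top degree $\sum s_i=2g$ agrees with the coefficient of the corresponding ordinary monomial; thus the matching assertion of the theorem is exactly the equality of the degree-$2g$ homogeneous parts of the two polynomials under $m_i\mapsto a_i$. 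The vanishing assertion, that all factorial coefficients of degree $<2g$ are zero, is homogeneity of factorial degree $2g$, which by inspection of \eqref{:eq: Hd u variables} --- where the sub-leading factorial coefficients are precisely the weights of the negative powers of $x$ --- is equivalent to the density $H_d$ being a genuine, non-singular differential polynomial.

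For the core identification I would use that the left-hand numbers assemble into the densities $H_d$ of the meromorphic differential hierarchy, which is tau-symmetric and integrable by Proposition~\ref{prop: integrability and tau for meromorphic}, whereas the right-hand numbers --- a weight-$0$ point carrying $\psi_0$, the remaining weights $a_i$ --- are the coefficients of the densities of the standard double ramification hierarchy for the trivial CohFT, i.e. the (quantum) KdV hierarchy of \cite{Buryak15,BR2016}, whose densities are genuine differential polynomials. The structural bridge is Remark~\ref{rem: comparison commutator DR and MD}: the meromorphic differential quantum bracket is the top-degree part of the double ramification bracket $[\cdot,\cdot]^{{\rm DR}}$. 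A short computation with this remark shows that the top-degree parts of the KdV densities commute for the meromorphic differential bracket, forming a tau-symmetric integrable hierarchy, and I claim these top-degree parts coincide with the $H_d$. Granting the claim, both assertions follow at once: the equality of top-degree parts is the matching assertion, and since the top-degree part of a genuine differential polynomial is again non-singular, $H_d$ is non-singular, which is the vanishing assertion.

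To establish the claim I would propagate an identification of the primary densities. Tau-symmetry furnishes the recursion $\frac{\delta}{\delta u^{\bm{1}}}\overline{H}_{\alpha,d}=H_{\alpha,d-1}$ proved inside Proposition~\ref{prop: integrability and tau for meromorphic}, and the Sauvaget splitting formula --- which degenerates precisely at the simple pole, where $(m_0+1)\psi_0=0$, in the same way the double ramification splitting formula does --- yields recursions expressing the degree-$(d+1)$ densities through the degree-$d$ ones and lower-genus boundary data. Matching a base case by direct computation and transporting it along these parallel recursions forces the two hierarchies to agree in top degree. The ``in particular'' clause is then immediate: applying the main identity with $d$ replaced by $d+1$, legitimate since it holds for all nonnegative integers, shows that the coefficient of every factorial monomial of degree $\neq 2g$ in $\int_{\overline{\mathcal H}_g(-1,m_1,\dots,m_n,2g-2-\sum m_i+1)}\psi_0^{d+1}\lambda_l$ vanishes, so this polynomial is a homogeneous factorial polynomial of degree $2g$.

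The principal obstacle I anticipate is the genuinely ``nontrivial'' geometric passage between the twisted and untwisted settings that makes the two top-degree parts agree: one must understand how the fundamental class $[\overline{\mathcal H}_g]$ sits inside the twisted double ramification cycle ${\rm DR}^1_g$ and how ${\rm DR}^1_g$ is related to ${\rm DR}_g$, and verify that the boundary corrections separating these compactifications, together with the shift $m_i\leftrightarrow a_i$, affect only strictly lower factorial degree and hence drop out of the top-degree comparison. A second difficulty is the upgrade from the classical ($\hbar=0$) statement to the full quantum one: here the grading-preserving commutator formula of Proposition~\ref{prop: commutator formula}, with its exact degree count ${\rm deg}[f,\overline g]=d_1+d_2-1$, must be invoked to guarantee that passing to top-degree parts is compatible with the quantum brackets at every order in $\hbar$.
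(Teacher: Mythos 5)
Your reformulation of the theorem as the identity $H_d=(H_d^{\rm DR})^{[0]}$, and your observation that the vanishing of the sub-top factorial coefficients is exactly the statement that $H_d$ is a genuine (non-singular) differential polynomial, both match the paper. The genuine gap is in the step you yourself flag as ``the claim'': you never produce a mechanism that actually forces the two hierarchies to agree. The tau-symmetry recursion $\frac{\delta}{\delta u}\overline{H}_{d}=H_{d-1}$ propagates in the wrong direction: knowing $H_{d-1}$ determines only the local functional $\overline{H}_{d}$ (the variational derivative is injective on local functionals modulo constants), not the specific density $H_{d}$, which is what one needs to take the next variational derivative; so the chain stalls after one step. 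The ``recursions expressing the degree-$(d+1)$ densities through the degree-$d$ ones'' that you attribute to the Sauvaget splitting formula would have to be the meromorphic-differential analogue of the recursion of \cite{buryak2016recursion}; you neither derive such a recursion nor check that its coefficients match those on the DR side, and this is not a routine verification --- nor is it how the paper proceeds.

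The paper's actual mechanism is a rigidity statement (Lemma~\ref{lem: Buryak-Dubrovin lemma}, a quantum, singular-differential-polynomial version of Buryak's uniqueness lemma from \cite{Buryak_Hodge15}): a degree-zero singular local functional commuting with $\overline{G}_1=\int\left(\frac{u^{3}}{3!}+O(\epsilon,\hbar)\right)dx$ is uniquely determined by its $\epsilon=\hbar=0$ part. Applying this to the auxiliary densities $\overline{G}_d$ and $(\overline{G_d^{\rm DR}})^{[0]}$ requires three inputs that your outline does not supply: (i) the base case $\overline{G}_1=(\overline{G_1^{\rm DR}})^{[0]}$, whose computation is not a light ``direct computation'' but relies on the Farkas--Pandharipande Conjecture~A proved in \cite{BHPSS2020pixton}, the equality of ${\rm DR}$ and ${\rm DR}^1$ in genus one, and Hain's formula; (ii) the genus-zero matching of the dispersionless parts $\frac{u^{d+2}}{(d+2)!}$; (iii) the commutation $[\overline{G}_d,\overline{G}_1]=0$ on both sides, where your degree bookkeeping via Proposition~\ref{prop: commutator formula} and Remark~\ref{rem: comparison commutator DR and MD} is indeed the right ingredient. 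Items (ii) and (iii) are within reach of what you wrote, but without the rigidity lemma (or a genuinely established upward recursion in $d$) and the nontrivial base-case computation (i), the argument does not close.
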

The proof of this theorem is given in Section~\ref{sec: proof main theorem}. 

\begin{rem}
	In \cite{CSS} , an explicit expression is given for $\int_{\overline{\mathcal{H}}_{g}\left(-1,m_{1},\dots,m_{n},2g-2-\sum m_{i}+1\right)}\psi_{0}^{d+1}$, and in  \cite{BSSZ} an explicit expression is given for $\int_{\DR g{0,a_{1},\dots,a_{n}-\sum a_{i}}}\psi_{0}^{d+1}$. Both expressions look similar and involve the function $\frac{\sinh\left(z/2\right)}{z/2}$, however it is nontrivial to check the equality of Theorem~\ref{thm: main theorem} from these expressions.
\end{rem}

\subsection{Relation between the MD and DR hierarchies for the trivial CohFT}

In \cite[Corollary 4.3]{BGDR2} a system of hamiltonian densities for the trivial quantum DR hierarchy is defined. One can write these densities as 
\[
H_{d}^{{\rm DR}}=\sum_{\substack{g,n\geq0\\
2g+n>0
}
}\frac{\left(i\hbar\right)^{g}}{n!}\sum_{s_{1},\dots,s_{n}\geq0}\left(-i\right)^{S}\left[a_{1}^{s_{1}}\cdots a_{n}^{s_{n}}\right]\left(\int_{\DR g{0,a_{1},\dots,a_{n}-\sum a_{i}}}\psi_{0}^{d+1}\Lambda\left(\frac{-\epsilon^{2}}{i\hbar}\right)\right)u_{s_{1}}\cdots u_{s_{n}},
\]
for $d \geq -1$, where we used $S=\sum_{j=1}^{n}s_{j}$, and we refer to \cite[Appendix]{blot2022quantum} for a justification of this formulation. According to Proposition~\ref{prop: polynomiality intersection numbers}, one can write $H_d$ as a singular differential polynomial in terms of $u$-variables as in formula (\ref{:eq: Hd u variables}). Thus, Theorem~\ref{thm: main theorem} compares the coefficients of  of $H_d$ and $H_d^{\rm{DR}}$ written with the $u$-variables, in particular an equivalent formulation is  
\[
H_{d}=\left(H_{d}^{{\rm DR}}\right)^{[0]},\quad{\rm for}\,\,d\geq-1,\]
where the square bracket $[0]$ means extracting the degree $0$ in $H_{d}^{{\rm DR}}$, recalling that the degree of a singular differential polynomial is determined by ${\rm deg}\,u_{i}^{\alpha}=i$, ${\rm deg}\,\epsilon=-1$, ${\rm deg}\,\frac{1}{x}=1$, and ${\rm deg}\,\hbar=-2$.  As a consequence we find that $H_d$ is an actual (not singular) differential polynomial. 

In addition, since the highest degree of $H_d^{\rm{DR}}$ is zero and since the quantum bracket of the meromorphic differential hierarchy is the highest degree part of the quantum bracket of the DR hierarchy (see formula (\ref{eq: link commutator DR and MD})), we conclude that,  for the trivial CohFT, the MD hierarchy $(\left(H_{d}\right)_{d\geq0},\left[\cdot,\cdot\right])$ is the highest degree part of the DR hierarchy $(\left(H_{d}\right)_{d\geq0}^{{\rm DR}},\left[\cdot,\cdot\right]^{{\rm DR}})$.

In the classical setting, when $\hbar = 0$, the hamiltonian densities of the classical DR hierarchy $h_d^{\rm{DR}}:=H_{d}^{{\rm DR}}\vert_{\hbar=0}$ are homogenous of degree 0, thus, always for the trivial CohFT, 
\[
h_d=h_{d}^{{\rm DR}},
\]
recalling that $h_d:=H_{d}\vert_{\hbar=0}$. Moreover, the Poisson brackets of the MD and DR hierarchies are identified in $u$-variables. Thus, the classical MD hierarchy $\left((h_d)_{d\geq 0},\left\{ \cdot,\cdot\right\} \right)$ is identified with the classical DR hierarchy $((h_{d}^{{\rm DR}})_{d \geq 0},\left\{ \cdot,\cdot\right\})$, itself identified with the KdV hierarchy by a result of Buryak \cite[Section 4.3.1]{Buryak15}. In particular, the quantum MD hierarchy for the trivial CohFT gives a quantization of KdV different from the one given by the DR hierarchy. 

\subsection{Relation  between the MD and twisted DR hierarchies for the trivial CohFT}

We state a link between the ${\rm DR}^{1}$ hierarchy and the meromorphic differential hierarchies for the trivial CohFT in the quantum setting. 
\begin{prop}
\label{prop: link DR1 and MD}We have
\[
H_{d}^{{\rm DR}^{1}}=H_{d}\vert_{u\left(x\right)=u\left(x\right)+\frac{1}{24}\frac{\epsilon^{2}}{x^{2}}}.
\]
\end{prop}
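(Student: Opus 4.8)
The plan is to expand the substitution into a Taylor series and match it, term by term, with a geometric decomposition of the twisted double ramification cycle. Both $H_d$ and $H_d^{{\rm DR}^1}$ are (singular) differential polynomials — the former by Theorem~\ref{thm: main theorem}, the latter by polynomiality of ${\rm DR}^1_g$ — so the desired identity is an identity of differential polynomials. Viewing $H_d$ as a polynomial in the variables $u_s=\partial_x^s u$ and noting that the shift $u\mapsto u+\frac{\epsilon^2}{24x^2}$ acts by $u_s\mapsto u_s+\frac{\epsilon^2}{24}\partial_x^s x^{-2}$, Taylor expansion gives
\[
H_d\Big|_{u\to u+\frac{\epsilon^2}{24x^2}}=\sum_{k\ge0}\frac{1}{k!}\sum_{s_1,\dots,s_k\ge0}\left(\prod_{j=1}^{k}\frac{\epsilon^2}{24}\partial_x^{s_j}x^{-2}\right)\frac{\partial^k H_d}{\partial u_{s_1}\cdots\partial u_{s_k}}.
\]

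The first observation I would record is that freezing one $u$-factor has a clean arithmetic meaning. Using $\partial_x^s x^{-2}=(-1)^s(s+1)!\,x^{-2-s}$ together with the elementary falling-factorial identity $(-2)^{\underline{s}}=(-1)^s(s+1)!$, the contribution $\sum_s[\text{coeff of }m^{\underline{s}}]\,u_s\,x^{s}$ of a single marked point becomes, after freezing, $\frac{\epsilon^2}{24}x^{-2}\sum_s[\text{coeff of }m^{\underline{s}}]\,(-2)^{\underline{s}}=\frac{\epsilon^2}{24}x^{-2}\,P(-2)$, where $P$ is the intersection number attached to that point viewed as a polynomial in its order $m$. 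Thus the $k$-th Taylor term is exactly the genus-$(g-k)$ density of $H_d$ with $k$ extra marked points whose orders are \emph{set to $-2$}, each weighted by $\frac{\epsilon^2}{24}$. A grading check confirms the shape: freezing raises the $\epsilon$- and the $x$-degree by $2$ without touching the $\hbar$-degree, which is precisely the footprint of a genus-$1$ vertex carrying $\lambda_1$ (contributing $(i\hbar)\cdot(-\epsilon^2/i\hbar)=-\epsilon^2$) glued at a double pole (order $-2$, contributing $(ix)^{-2}$); and the forced last order $2g-1-\sum m_i$ is preserved under $g\mapsto g-k$ together with $k$ new orders $-2$, so the two sides are genuinely comparable.

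Hence the proposition is equivalent to the class-level statement that, after capping with $\Lambda\,\psi_0^{d+1}$ and integrating,
\[
{\rm DR}^1_g\left(-1,m_1,\dots,m_n,2g-1-\textstyle\sum m_i\right)=\sum_{k\ge0}\frac{1}{k!}\,\Xi_k,
\]
where $\Xi_k$ glues $k$ elliptic tails $\overline{\mathcal{M}}_{1,1}$ onto $\overline{\mathcal{H}}_{g-k}(-1,m_1,\dots,m_n,\underbrace{-2,\dots,-2}_{k},2g-1-\sum m_i)$ at the $k$ points of order $-2$. To prove this I would use the identification of ${\rm DR}^1_g$ with the weighted fundamental class of the space of twisted canonical divisors of \cite{farkas2018moduli}, whose main component is $\overline{\mathcal{H}}_g$ and whose extra components lie in the boundary, and pair the whole cycle with $\Lambda\psi_0^{d+1}$. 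The key mechanism is that the splitting of the Hodge bundle over a separating node produces a factor $\lambda_h$ on each genus-$h$ tail; since the tail carries no $\psi_0$ (that class sits at the fixed first marked point), the dimension constraint forces $h=1$, so each surviving tail contributes $\int_{\overline{\mathcal{M}}_{1,1}}\lambda_1=\frac{1}{24}$, while the node-order condition (orders summing to $-2$) forces the attaching order on the main component to be exactly $-2$, reproducing the double poles found analytically. I would organize the sum over boundary configurations using Sauvaget's splitting formula (the Theorem quoted above) and its ${\rm DR}^1$ counterpart from \cite{CSS}, extracting the recursion in the number $k$ of tails and the symmetry factor $\frac{1}{k!}$.

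The main obstacle is this last, geometric step: controlling all extra components of the twisted canonical divisor locus and proving that, against $\Lambda\psi_0^{d+1}$, only disjoint genus-$1$ elliptic tails survive, each weighted by $\frac{1}{24}$, with precisely the multiplicity $\frac{1}{k!}$ and the double-pole attaching condition demanded by the Taylor expansion. Establishing the vanishing of all higher-genus and multiply-nodal contributions, and tracking the compensating signs and powers of $i$, $\hbar$, $\epsilon$ (including Pixton's $2^{-g}$ normalization), is the delicate part; I would first verify the classical limit $\hbar=0$ and small $g$ by hand, and then deduce the full quantum statement by carrying the $(i\hbar)^g$-weights through the same decomposition.
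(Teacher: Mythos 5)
Your proposal follows essentially the same route as the paper: the Taylor expansion of the shift, with $\partial_x^s x^{-2}=(-2)^{\underline{s}}x^{-2-s}$ identifying each frozen factor with an extra marked point of order $-2$ weighted by $\frac{\epsilon^2}{24}$, is exactly the paper's computation $H_d^{{\rm DR}^1}=\exp\bigl(-\tfrac{\epsilon^2}{24}\tfrac{\partial}{\partial q_{-2}}\bigr)H_d$, and the class-level decomposition you reduce to is precisely the paper's Lemma~\ref{lem: Conjecture A for intersection numbers}. The step you flag as the ``main obstacle'' is not one: it is the content of Conjecture A of \cite{farkas2018moduli}, proved in \cite{BHPSS2020pixton}, which writes ${\rm DR}^1_g$ as an explicit weighted star-graph class with a meromorphic stratum on the central vertex and holomorphic strata on the leaves, so the dimension count you sketch (Hodge splitting forcing genus-$1$ leaves, each contributing $\int_{\overline{\mathcal{H}}_{1}(0)}\lambda_1=\tfrac{1}{24}$, with the $\tfrac{1}{k!}$ from unordered legs) closes the argument with a citation rather than new work; in particular the Sauvaget and \cite{CSS} splitting formulas you invoke are not needed here.
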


This proposition is proved in Section~\ref{sec:Applications of FP conjecture}.

\section{\label{subsec:Applications:-tau-functions}Applications of the main theorem: tau functions of KdV}

As a consequence of Theorem~\ref{thm: main theorem}, the classical meromorphic differential hierarchy for the trivial CohFT is identified with the classical trivial DR hierarchy, itself identified with the KdV hierarchy. Thus, following \cite[Section 3.3]{BGDR1}, one can construct tau functions of KdV from the hamiltonian densities $h_{d}$ and the Poisson bracket. In this section, we give formulas for two tau functions of KdV enjoying particularly nice geometrical interpretations of their coefficients: the Witten-Kontsevich tau function and the BGW tau function. This yields non trivial formulas for Hodge integrals over strata of differentials involved in $h_{d}$. 

More generally, in the quantum setting, Theorem~\ref{thm: main theorem} identifies the trivial meromorphic differential hierarchy with the highest degree part of the trivial DR hierarchy. We explain here how the Hodge integral involved in $H_d$ relate to a quantum tau function of KdV called the quantum Witten-Kontsevich series. 

\subsection{The Witten-Kontsevich tau function}

The Witten-Kontsevich tau function is the logarithm of the tau function of KdV associated to  the solution with initial condition $u\left(x\right)=x$. It is proved in \cite{kontsevich1992intersection} that this is a generating series of intersection of monomial of $\psi$-classes. 
\begin{prop}
Fix $g,n\geq0$ such that $2g-1+n>0$ and fix $d_{1},\dots,d_{n}\geq0$. We have
\[
\int_{\overline{\mathcal{M}}_{g,n+1}}\psi_{1}^{d_{1}}\cdots\psi_{n}^{d_{n}}={\rm Coef}_{\epsilon^{2g}}\left\{ \cdots\left\{ h_{d_{1}-1},\overline{h}_{d_{2}}\right\} \cdots,\overline{h}_{d_{n}}\right\} \Bigg\vert_{\substack{x=0\\
q_{m}=\left(-i\right)\delta_{m,1}
}
}.
\]
In particular, for $n=1$ we get
\[
\int_{\overline{\mathcal{M}}_{g,2}}\psi_{1}^{3g-1}=\int_{\overline{\mathcal{H}}_{g}\big(\text{\normalsize$ -1 $},\underset{2g}{\underbrace{1,\dots,1}},\text{\normalsize$ -1 $}\big)}\psi_{0}^{3g-1}\lambda_{g},
\]
and this number is well known to be equal to $\frac{1}{24^g g!}$.
\end{prop}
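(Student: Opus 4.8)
The plan is to obtain the general formula as an instance of the tau-function construction for tau-symmetric Hamiltonian hierarchies, using the identification of Section~\ref{sec: main results}, and then to read off the $n=1$ statement by a direct evaluation of $h_{3g-2}$ at the string point.

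First I would record the two structural facts that make the tau-function machinery available here. By Theorem~\ref{thm: main theorem} and the ensuing discussion in Section~\ref{sec: main results}, in the classical limit one has $h_d = h_d^{{\rm DR}}$ for the trivial CohFT and the Poisson bracket $\{\cdot,\cdot\}$ of the MD hierarchy is identified with that of the DR hierarchy; hence the iterated bracket $\{\cdots\{h_{d_1-1},\overline h_{d_2}\},\dots,\overline h_{d_n}\}$ coincides with the same expression built from the DR densities. The second fact is the tau-structure relation $\frac{\delta \overline h_d}{\delta u} = h_{d-1}$, which is the trivial-CohFT specialization of the identity $\frac{\delta}{\delta u^{\bm 1}}\overline H_{\alpha,d} = H_{\alpha,d-1}$ established in the proof of Proposition~\ref{prop: integrability and tau for meromorphic}; together with the classical tau-symmetry it is exactly what is needed to build a tau function.

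Next I would invoke the general construction of \cite[Section 3.3]{BGDR1}. For a classical tau-symmetric Hamiltonian hierarchy, the string solution, whose initial datum is $u(x)=x$ --- equivalently $q_m = (-i)\delta_{m,1}$, since $u(x)=\sum_m q_m (ix)^m$ --- determines a tau function whose logarithm has mixed derivatives along the times $t^{d}$ computed by the iterated Hamiltonian flows $\{\cdot,\overline h_{d}\}$ of a density, evaluated at the string point $x=0$, $q_m=(-i)\delta_{m,1}$, with $\epsilon^{2g}$ recording the genus. For the trivial CohFT this is the Witten--Kontsevich tau function by \cite[Section 4.3.1]{Buryak15} together with \cite{kontsevich1992intersection} (as recalled above), so its $(n+1)$-point correlators are the intersection numbers $\int_{\overline{\mathcal{M}}_{g,n+1}}\psi_1^{d_1}\cdots\psi_n^{d_n}$. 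Matching the two expressions proves the general identity.

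Finally, for the $n=1$ case the iterated bracket degenerates to $h_{d_1-1}$ itself, and $\int_{\overline{\mathcal{M}}_{g,2}}\psi_1^{d_1}$ is nonzero only when $d_1 = 3g-1$. Here I would evaluate $h_{3g-2}$ at the string point directly from the definition of the MD densities for the trivial CohFT: the substitution $q_m=(-i)\delta_{m,1}$ forces every internal order $m_i$ to equal $1$, the condition $x=0$ extracts the term with $\sum m_i - 2g = 0$, hence exactly $2g$ such insertions and the single stratum $\overline{\mathcal{H}}_g(-1,1,\dots,1,-1)$ (with $2g$ middle entries equal to $1$), while the classical limit of $\Lambda(-\epsilon^2/(i\hbar))$ keeps only $\lambda_g$ at order $\epsilon^{2g}$. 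This produces the Hodge integral $\int_{\overline{\mathcal{H}}_g(-1,1,\dots,1,-1)}\psi_0^{3g-1}\lambda_g$, and the numerical value then follows from the string equation, $\int_{\overline{\mathcal{M}}_{g,2}}\psi_1^{3g-1} = \int_{\overline{\mathcal{M}}_{g,1}}\psi_1^{3g-2} = \frac{1}{24^g g!}$. I expect the main obstacle to be normalization bookkeeping rather than a conceptual point: one must check that the construction of \cite[Section 3.3]{BGDR1} transfers verbatim to the MD densities (which rests on the identification of Section~\ref{sec: main results} and the tau-structure relation above) and, in the $n=1$ evaluation, track the powers of $i$ and the combinatorial $1/n!$ factors so that they combine to the clean identity stated; reconciling this combinatorial normalization in the direct evaluation of $h_{3g-2}$ with the correlator normalization is the most delicate step.
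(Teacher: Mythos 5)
Your proposal follows essentially the same route as the paper: identify $h_d=h_d^{\rm DR}$ and the brackets, invoke the tau-function construction of \cite[Section 3.3]{BGDR1}, appeal to Kontsevich's theorem for the string solution $u(x)=x$, and translate to $q_m=(-i)\delta_{m,1}$. The only detail the paper makes explicit that you fold into ``normalization bookkeeping'' is the check that $h_d\vert_{u_*=0}=0$ for the trivial CohFT (via the residue condition on $\overline{\mathcal{H}}_g(-1,2g-1)$), so that the density $\Omega_{0,d}$ appearing in the tau-function formula is exactly $h_{d-1}$ with no additive constant.
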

\begin{rem}
The intersection number of any monomial of $\psi$-classes can be obtained from the numbers $\int_{\overline{\mathcal{M}}_{g,n+1}}\psi_{1}^{d_{1}}\cdots\psi_{n}^{d_{n}}$ via the string equation. 
\end{rem}
\begin{proof}

In \cite[Section 3.3]{BGDR1}, tau functions of the DR hierarchy are constructed from $h_d^{\rm{DR}}$ and the standard hydrodynamic Poisson bracket. Since $h_{d}=h_{d}^{{\rm DR}}$ and the Poisson bracket of the hierarchies are identified in $u$-variables, we carry this construction in the context of the meromorphic differentials hierarchy. As defined in \cite[Section 3.3]{BGDR1}, the coefficient $\epsilon^{2g}t_{0}t_{d_{1}}\cdots t_{d_{n}}$ of the logarithm of the tau function associated to the solution of the hierarchy $u^{{\rm sol}}\left(x,t_{*}^{*}\right)$ with initial condition $u^{{\rm sol}}\left(x,t_{*}^{*}=0\right)=f\left(x,\epsilon\right)$ is given by
\[
\left\{ \left\{ \Omega_{0,d_{1}},\overline{h}_{d_{2}}\right\} ,\dots,\overline{h}_{d_{n}}\right\} \bigg\vert_{u_{k}=\partial_{x}^{k}f\vert_{x=0}},
\]
where $\Omega_{0,d}=h_{d-1}+C$ with $\Omega_{0,d}\vert_{u_{*}=0}=0$. In this formula, the quantity $\left\{ \left\{ \Omega_{0,d_{1}},\overline{h}_{d_{2}}\right\} ,\dots,\overline{h}_{d_{n}}\right\} $ is a differential polynomial written in $u$-variables and we evaluate it at $u_{k}=\partial_{x}^{k}f\vert_{x=0}$. 

First, note that for the trivial CohFT we have $h_{d}\vert_{u_{*}=0}=0$. Indeed, the evaluation $u_{*}=0$ of a differential polynomial is equivalent to $q_{*}=0$, this selects in $h_{d}$ intersection numbers with the strata $\overline{\mathcal{H}}_{g}\left(-1,2g-1\right)$ which exist only for $g>0$ and vanish because of the residue condition. Thus $\Omega_{0,d}=h_{d-1}$.

Now, the trivial DR hierarchy is KdV. Moreover, Kontsevich proved in \cite{kontsevich1992intersection} that the coefficient $\epsilon^{2g}t_{0}t_{d_{1}}\cdots t_{d_{n}}$ of the logarithm of the tau function of KdV associated to the solution starting with the initial condition $u^{{\rm sol}}\left(x\right)=x$ is $\int_{\overline{\mathcal{M}}_{g,n+1}}\psi_{1}^{d_{1}}\cdots\psi_{n}^{d_{n}}$. Translating this initial condition in $q$-variables gives $u\left(x\right)=\sum_{m\in\mathbb{Z}}q_{m}\left(ix\right)^{m}=x$ and thus
\[
q_{m}=\left(-i\right)\delta_{m,1}.
\]
This gives the desired result. 
\end{proof}

\subsection{The BGW tau function}

The Brezin-Gross-Witten (BGW) tau function is the tau function of KdV associated to the solution with initial condition 
\[
u\left(x\right)=\frac{\epsilon^{2}}{8\left(1-x\right)^{2}}.
\]
In \cite{norbury2017new}, Norbury introduces a class $\Theta_{g,n}$ and conjectured that the coefficients of the BGW tau function are given by intersection of monomial of $\psi$-classes with this class. This conjecture was proved in \cite{chidambaram2021relations}. 
\begin{prop}
Fix $g,n\geq0$ such that $2g-1+n>0$ and fix $d_{1},\dots,d_{n}\geq0$. We have
\[
\int_{\overline{\mathcal{M}}_{g,n+1}}\Theta_{g,n+1}\psi_{1}^{d_{1}}\cdots\psi_{n}^{d_{n}}={\rm Coef}_{\epsilon^{2g}}\left\{ \cdots\left\{ h_{d_{1}-1},\overline{h}_{d_{2}}\right\} \cdots,\overline{h}_{d_{n}}\right\} \bigg\vert_{{\rm evaluation}}
\]
with the evaluation at 
\begin{equation}
q_{m}=\begin{cases}
\frac{\epsilon^{2}}{8}\left(-i\right)^{m}\left(m+1\right) & {\rm for\;}m\geq0,\\
0 & {\rm for\;}m<0,
\end{cases}\label{eq: condition BGW}
\end{equation}
and $x=0$. In particular, for $n=1$, we get 

\[
\int_{\overline{\mathcal{M}}_{g,2}}\Theta_{g,2}\psi_{0}^{g-1}=\sum_{\substack{\gamma+n=g\\
2\gamma+n>0
}
}\sum_{\substack{m_{1}+\cdots+m_{n}=2\gamma\\
m_{1},\dots,m_{n}\geq0
}
}\frac{\prod_{i=1}^{n}\left(m_{i}+1\right)}{n!8^{n}}\int_{\overline{\mathcal{H}}_{\gamma\left(-1,m_{1},\dots,m_{n},-1\right)}}\psi_{0}^{g-1}\lambda_{\gamma}
\]
\end{prop}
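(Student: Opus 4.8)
The plan is to follow the same strategy as the proof of the Witten--Kontsevich proposition, replacing the initial condition $u^{\rm sol}(x)=x$ by the BGW one and the $\psi$-monomial integrals by their twists with Norbury's class $\Theta$. As established above, for the trivial CohFT the classical MD hierarchy coincides with the classical DR hierarchy, which is KdV, and the Poisson brackets agree in the $u$-variables; hence the construction of \cite[Section 3.3]{BGDR1} applies verbatim, giving that the coefficient of $\epsilon^{2g}t_0 t_{d_1}\cdots t_{d_n}$ in the logarithm of the tau function of the solution with initial condition $u^{\rm sol}(x,t_*=0)=f(x,\epsilon)$ equals $\{\cdots\{h_{d_1-1},\overline h_{d_2}\},\dots,\overline h_{d_n}\}\vert_{u_k=\partial_x^k f\vert_{x=0}}$, with $\Omega_{0,d}=h_{d-1}$ exactly as in the previous proof. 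By \cite{norbury2017new,chidambaram2021relations} the logarithm of the BGW tau function, i.e.\ the one attached to $f(x,\epsilon)=\frac{\epsilon^2}{8(1-x)^2}$, has $\int_{\overline{\mathcal{M}}_{g,n+1}}\Theta_{g,n+1}\psi_1^{d_1}\cdots\psi_n^{d_n}$ as its $\epsilon^{2g}t_0 t_{d_1}\cdots t_{d_n}$ coefficient. Combining the two facts yields the general formula, provided the evaluation matches \eqref{eq: condition BGW}.

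To check that matching, I would expand $f(x,\epsilon)=\frac{\epsilon^2}{8}(1-x)^{-2}=\frac{\epsilon^2}{8}\sum_{m\geq 0}(m+1)x^m$ and compare it term by term with $u(x)=\sum_{m\in\mathbb{Z}}q_m(ix)^m$. Using $i^{-m}=(-i)^m$ this forces $q_m=\frac{\epsilon^2}{8}(-i)^m(m+1)$ for $m\geq 0$ and $q_m=0$ for $m<0$, which is precisely \eqref{eq: condition BGW}; equivalently, $u_s\vert_{x=0}=\frac{\epsilon^2}{8}(s+1)s!$ under this evaluation.

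For the explicit $n=1$ identity, dimension counting on $\overline{\mathcal{M}}_{g,2}$ (of dimension $3g-1$) together with $\deg\Theta_{g,2}=2g$ forces $d_1=g-1$, so the right-hand side reduces to ${\rm Coef}_{\epsilon^{2g}}\,h_{g-2}\vert_{\rm eval}$, and by symmetry of $\Theta$ the left-hand side may be written with $\psi_0^{g-1}$. I would then make $h_{g-2}$ explicit: passing to $\hbar=0$ in \eqref{:eq: Hd u variables} selects the $\lambda_\gamma$ term, and by Theorem~\ref{thm: main theorem} the number $P(m_1,\dots,m_n)=\int_{\overline{\mathcal{H}}_\gamma(-1,m_1,\dots,m_n,2\gamma-1-\sum m_i)}\psi_0^{g-1}\lambda_\gamma$ is a homogeneous factorial polynomial of degree $2\gamma$, so that $h_{g-2}=\sum_{\gamma,n}\frac{\epsilon^{2\gamma}}{n!}\sum_{\sum s_i=2\gamma}u_{s_1}\cdots u_{s_n}\,[m_1^{\underline{s_1}}\cdots m_n^{\underline{s_n}}]P$. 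Substituting $u_s\mapsto\frac{\epsilon^2}{8}(s+1)s!$ and extracting $\epsilon^{2g}$ imposes $\gamma+n=g$, reproducing the genus/insertion constraint in the target formula.

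The remaining point, which I regard as the main computational obstacle, is the combinatorial identity
\[
\sum_{\substack{m_1+\cdots+m_n=2\gamma\\ m_i\geq 0}}\prod_{i=1}^n(m_i+1)\,m_i^{\underline{s_i}}=\prod_{i=1}^n(s_i+1)!,\qquad \sum_i s_i=2\gamma,
\]
which I would prove via the single-variable generating function $\sum_{m\geq 0}(m+1)m^{\underline{s}}z^m=\frac{(s+1)!\,z^s}{(1-z)^{s+2}}$ (itself following from $(m+1)m^{\underline{s}}=m^{\underline{s+1}}+(s+1)m^{\underline{s}}$), multiplying over $i$ and reading off the coefficient of $z^{2\gamma}$. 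Inserting the factorial expansion of $P$ into $\sum_{\sum m_i=2\gamma}\prod_i(m_i+1)\,P$ and applying this identity converts the weighted summation over the $m_i$ appearing in the claimed formula into the factorial-coefficient sum produced by $h_{g-2}$, completing the identification. Apart from the cited external inputs (the BGDR1 tau-function construction and Norbury's theorem), the argument is bookkeeping of $\epsilon$-powers, fully controlled by the homogeneity furnished by Theorem~\ref{thm: main theorem}.
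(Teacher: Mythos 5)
Your proof is correct and follows the same route as the paper's: the paper's argument for this proposition is literally ``use the same justification as for the Witten--Kontsevich tau function, translating the initial condition $u(x)=\frac{\epsilon^2}{8(1-x)^2}$ into the stated $q$-variable evaluation,'' which is exactly the content of your first two paragraphs. Your verification of the $n=1$ identity via the $u$-variable form and the identity $\sum_{\sum m_i=2\gamma,\,m_i\geq0}\prod_i(m_i+1)m_i^{\underline{s_i}}=\prod_i(s_i+1)!$ is correct but longer than necessary: substituting the evaluation of the $q_m$ directly into the $q$-variable expression for $h_{g-2}$ (where $\prod_i(-i)^{m_i}=(-1)^{\gamma}$ cancels the sign coming from the $\lambda_\gamma$ term of $\Lambda(-\epsilon^2/i\hbar)$, and $x=0$ enforces $\sum m_i=2\gamma$) yields the stated formula immediately.
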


\begin{rem}
The intersection number of any monomial of $\psi$-class with the Norbury class is obtained from the numbers $\int_{\overline{\mathcal{M}}_{g,n+1}}\Theta_{g,n+1}\psi_{1}^{d_{1}}\cdots\psi_{n}^{d_{n}}$ using the dilaton equation, see \cite{norbury2017new}. 
\end{rem}
\begin{proof}
We use the same justification than for the Witten-Kontsevich tau function. In this case, the initial condition $u\left(x\right)=\frac{\epsilon^{2}}{8\left(1-x\right)^{2}}$ translates in condition~(\ref{eq: condition BGW}) in $q$-variables.
\end{proof}

\subsection{The quantum Witten-Kontsevich tau function}

In \cite{BGDR2}, the authors introduced quantum tau functions in the context of quantum DR hierarchies. The study of the quantum Witten-Kontsevich series, that is the logarithm of the quantum tau function of the trivial DR hierarchy associated to the initial condition $u\left(x\right)=x$, was initiated in \cite{blot2022quantum} and pursued in \cite{blot2024quantum} and \cite{BlotLewanski}. We use the notation 
\[
\left\langle \tau_{d_{1}}\cdots\tau_{d_{n}}\right\rangle _{l,g-l} \quad d_{1},\dots,d_{n}\geq0,\:g\geq0,\:0\leq l\leq g,
\]
for the correlators of the quantum Witten-Kontsevich series, and refer to \cite[Section 1.3]{blot2022quantum} for their definition. 

\begin{prop}
\label{prop: qTau}
Fix $g,n,l\geq0$ such that $2g-1+n>0$ and $0\leq l\leq g$. Fix $d_{1},\dots,d_{n}\geq0$. The number
\[
{\rm Coef}_{\epsilon^{2l}\hbar^{g-l}}\frac{i^{g-l}}{\hbar^{n-1}}\left[\cdots\left[H_{d_{1}-1},\overline{H}_{d_{2}}\right]\cdots,\overline{H}_{d_{n}}\right]\Bigg\vert_{\substack{x=0\\
q_{m}=\left(-i\right)\delta_{m,1}
}
}
\]
equals
\[
\begin{cases}
\ensuremath{\left\langle \tau_{0}\tau_{d_{1}}\cdots\tau_{d_{n}}\right\rangle _{l,g-l}} & {\rm if\,\,\,}\sum_{i=1}^{n}d_{i}=4g-2+n-l,\\
0 & {\rm otherwise.}
\end{cases}
\]
\end{prop}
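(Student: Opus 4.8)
The plan is to reduce Proposition~\ref{prop: qTau} to the corresponding statement for the quantum DR hierarchy, which is already available in the literature, by exploiting the comparison between the meromorphic differential hierarchy and the DR hierarchy established earlier in this section. The starting point is the observation (see Remark~\ref{rem: comparison commutator DR and MD} and formula~\eqref{eq: link commutator DR and MD}) that $H_d = (H_d^{{\rm DR}})^{[0]}$ for $d\geq -1$, and that the quantum bracket $[\cdot,\cdot]$ of the MD hierarchy is the top-degree part of the DR bracket $[\cdot,\cdot]^{{\rm DR}}$. Consequently, the iterated bracket $\left[\cdots\left[H_{d_{1}-1},\overline{H}_{d_{2}}\right]\cdots,\overline{H}_{d_{n}}\right]$ appearing in the statement is precisely the degree-$0$ part of the iterated DR bracket $\left[\cdots\left[H_{d_1-1}^{{\rm DR}},\overline{H}_{d_2}^{{\rm DR}}\right]\cdots,\overline{H}_{d_n}^{{\rm DR}}\right]$, since taking top-degree parts is compatible with the bracket by the degree count ${\rm deg}[f,\overline{g}]=d_1+d_2-1$ noted after Proposition~\ref{prop: commutator formula}.

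Next I would track how the degree grading interacts with the bigrading by $\epsilon$ and $\hbar$ that enters the correlators $\left\langle \tau_{d_1}\cdots\tau_{d_n}\right\rangle_{l,g-l}$. The key point is that after the evaluation $x=0$, $q_m=(-i)\delta_{m,1}$, extracting the coefficient of $\epsilon^{2l}\hbar^{g-l}$ and of an appropriate overall power of $\hbar$ (the $\hbar^{-(n-1)}$ and $i^{g-l}$ prefactors are exactly the normalization used in \cite{blot2022quantum} for the quantum Witten-Kontsevich correlators). The grading ${\rm deg}\,u_i=i$, ${\rm deg}\,\epsilon=-1$, ${\rm deg}\,\hbar=-2$, ${\rm deg}\,\tfrac1x=1$ means that selecting the degree-$0$ part of the DR quantity is equivalent to imposing a linear relation among the powers of $x$, $\epsilon$, $\hbar$ and the $u$-variables; combined with the evaluation $q_m=(-i)\delta_{m,1}$ (which fixes the homogeneity in the $u$-variables) and setting $x=0$, this is what produces the dimension constraint $\sum_{i=1}^n d_i = 4g-2+n-l$ and forces vanishing otherwise. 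I would verify this bookkeeping carefully, checking that the degree-$0$ truncation does not lose any of the terms that contribute to the correlator at the given $(l,g-l)$ bidegree, i.e.\ that the DR correlator at this bidegree is itself supported in degree $0$ after evaluation.

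The remaining step is to invoke the geometric identity proved in \cite{blot2022quantum} (and pursued in \cite{blot2024quantum,BlotLewanski}), which states that the analogous iterated DR bracket evaluated at $x=0$, $q_m=(-i)\delta_{m,1}$, computes the quantum Witten-Kontsevich correlator $\left\langle \tau_0\tau_{d_1}\cdots\tau_{d_n}\right\rangle_{l,g-l}$. Since the MD quantity is the degree-$0$ part of the DR quantity, and since the argument above shows the relevant DR contribution lives entirely in degree $0$, the two evaluations agree and the proposition follows. This is the same mechanism already used for the classical Witten-Kontsevich and BGW tau functions in the preceding subsections, where the identification $h_d=h_d^{{\rm DR}}$ lets one transport the construction of \cite[Section 3.3]{BGDR1} verbatim; here the novelty is purely the $\hbar$-dependence and the need to match the $\epsilon$-$\hbar$ bigrading.

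\textbf{Main obstacle.} I expect the principal difficulty to lie not in the geometry, which is inherited from \cite{blot2022quantum}, but in the precise bookkeeping of the three gradings simultaneously: showing that extracting the degree-$0$ part (the passage from DR to MD) is exactly compatible with extracting the coefficient of $\epsilon^{2l}\hbar^{g-l}$ after the evaluation at $x=0$ and $q_m=(-i)\delta_{m,1}$, and that this compatibility is what pins down the constraint $\sum d_i = 4g-2+n-l$. In particular one must check that the prefactor $\hbar^{-(n-1)}i^{g-l}$ correctly absorbs the $\hbar$-powers generated by the $n-1$ quantum brackets and by the $(i\hbar)^g$ appearing in the Hamiltonian densities, so that the normalization matches the definition of the correlators in \cite[Section 1.3]{blot2022quantum}. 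Once this degree-versus-bidegree matching is established, the vanishing statement and the identification with $\left\langle \tau_0\tau_{d_1}\cdots\tau_{d_n}\right\rangle_{l,g-l}$ are immediate consequences.
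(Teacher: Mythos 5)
Your proposal follows essentially the same route as the paper: start from the definition of $\left\langle \tau_{0}\tau_{d_{1}}\cdots\tau_{d_{n}}\right\rangle _{l,g-l}$ as an iterated DR bracket evaluated at $u_s=\delta_{s,1}$, use the facts that $H_d=(H_d^{\rm DR})^{[0]}$ and that the MD bracket is the top-degree part of the DR bracket to conclude that the MD quantity selects the top-degree (''top level'') part of the DR correlator, and identify the surviving contributions via the constraint $\sum d_i=4g-2+n-l$. The only differences are cosmetic: the paper simply cites \cite[Proposition~1.38]{blot2022quantum} for the characterization of the top level by that constraint rather than rederiving it by degree bookkeeping, and note that after $n-1$ brackets the relevant top-degree part sits in degree $-(n-1)$, not degree $0$ as you wrote.
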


\begin{rem}
	Interestingly, a correlator $\left\langle \tau_{0}\tau_{d_{1}}\cdots\tau_{d_{n}}\right\rangle _{0,g}$, for $l=0$, is given by a coefficient of one-part double Hurwitz numbers by \cite[Theorem 1]{blot2022quantum}. Moreover the correlators satisfying $\sum_{i=1}^{n}d_{i}=4g-2+n$, that is precisely the correlators produced by the hierarchy of meromorphic differentials, can be assemble to form a solution of the Hirota equations (i.e. the bilinear Hirota form of the Kadomtsev-Petviashvili hierarchy) by a result of \cite{shadrin2008structure,shadrin2007changes}. 
	
	In addition, a general correlator $\left\langle \tau_{0}\tau_{d_{1}}\cdots\tau_{d_{n}}\right\rangle _{l,g-l}$, for any $l\geq 0$, is on the one hand a coefficient of a Gromov-Witten invariant of $\mathbb{CP}^1$ \cite{blot2024quantum}, and on the other hand a coefficient of an intersection number with a Chiodo class as conjectured in \cite{BlotLewanski}.  
	
	It is intriguing that these topics are related to Hodge integrals over strata of differentials by Proposition~\ref{prop: qTau}, and it would be interesting to investigate more these connections.
\end{rem}

\begin{rem}
Once again, correlators without the $\tau_{0}$-insertion can be obtained from the correlators with a $\tau_{0}$-insertion using the string equation \cite[Theorem 2]{blot2022quantum}.
\end{rem}
\begin{proof}
By definition we have
\begin{equation}
	\left\langle \tau_{0}\tau_{d_{1}}\cdots\tau_{d_{n}}\right\rangle _{l,g-l}={\rm Coef}_{\epsilon^{2l}\hbar^{g-l}}\frac{i^{g-l}}{\hbar^{n-1}}\Big[\cdots\Big[H_{d_{1}-1}^{{\rm DR}},\overline{H}_{d_{2}}^{{\rm DR}}\Big]^{{\rm DR}}\cdots,\overline{H}_{d_{n}}^{{\rm DR}}\Big]^{{\rm DR}}\Bigg\vert_{u_{s}=\delta_{s,1}} \label{eq: quantum correlator}
\end{equation}
where the bracket $\left[\cdot,\cdot\right]^{{\rm DR}}$ is the commutator of star product of the DR hierarchy. We explained in Section~\ref{sec: main results} that $H_{d}$ is the top degree part of $H_{d}^{{\rm DR}}$, and justified in Section~\ref{subsec:Deformation-quantization} that the quantum bracket of the meromorphic differential hierarchy is the top degree part of the quantum bracket of the DR hierarchy. Thus, replacing $H_{d}^{{\rm DR}}$ and the DR star product in Eq.~(\ref{eq: quantum correlator}) by $H_{d}$ and the star product of the meromorphic differential hierarchy selects the top level correlators. This top level is characterised by the condition $\sum_{i=1}^{n}d_{i}=4g-2+n-l$ by \cite[Proposition~1.38]{blot2022quantum}. Moreover, the substitution $u_{s}=\delta_{s,1}$ translates into $q_{m}=\left(-i\right)\delta_{m,1}$ and $x=0$ as in the classical setting. 
\end{proof}

\section{\label{sec:Applications of FP conjecture}Proof of Proposition~\ref{prop: polynomiality intersection numbers} and Proposition~\ref{prop: link DR1 and MD}}

In this section, we explain how Proposition~\ref{prop: polynomiality intersection numbers} and Proposition~\ref{prop: link DR1 and MD} follow from a formula conjectured by Farkas and Pandharipande \cite[Conjecture A]{farkas2018moduli}, and proved in \cite{BHPSS2020pixton}. 

The main ingredient is the following lemma. 
\begin{lem}
\label{lem: Conjecture A for intersection numbers}Fix $g,n,l\geq0$ such that $2g+n>0$ and $0\leq l\leq g$. Fix $d\geq0$. Let $m_{1},\dots,m_{n}$ be some integers such that $2g-2-\sum_{i=1}^{n}m_{i}<0$. We have
\begin{align}
\int_{{\rm DR}_{g}^{1}\left(2g-2-\sum_{i=1}^{n}m_{i},m_{1},\dots,m_{n}\right)}\psi_{0}^{d+1}\Lambda\left(\mu\right) & =\sum_{k=0}^{g}\left(\frac{\mu}{24}\right)^{k}\frac{1}{k!}\int_{\overline{\mathcal{H}}_{g-k}\Big(2g-2-\sum_{i=1}^{n}m_{i},m_{1},\dots,m_{n},\underset{k}{\underbrace{-2,\dots,-2}}\Big)}\psi_{0}^{d+1}\Lambda\left(\mu\right),\label{eq: link DR1 and diff}
\end{align}
where $\mu$ is a formal variable and $\Lambda\left(\mu\right)=1+\mu\lambda_{1}+\cdots+\mu^{g}\lambda_{g}$. 
\end{lem}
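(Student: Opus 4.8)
The plan is to feed the Farkas--Pandharipande formula, in the form proved in \cite{BHPSS2020pixton}, into the Hodge-weighted integral and then discard all but finitely many geometric contributions by a dimension count. In the meromorphic case $2g-2-\sum m_i<0$ the class ${\rm DR}_g^1(2g-2-\sum m_i,m_1,\dots,m_n)$ coincides with the weighted fundamental class of the space of twisted canonical divisors. Using the Farkas--Pandharipande description of that space, I would decompose this class as
\[
[\overline{\mathcal{H}}_g(\vec a)]+\sum_{\Gamma}w_\Gamma\,(\xi_\Gamma)_*\Big(\textstyle\prod_{v}[\overline{\mathcal{H}}_{g_v}(\vec a_v)]\Big),
\]
where $\vec a=(2g-2-\sum m_i,m_1,\dots,m_n)$, the sum runs over the ``extra'' twisted graphs $\Gamma$ (those not contained in the closure of the main stratum), each vertex $v$ carries an honest stratum of differentials, and along every edge the two branch orders sum to $-2$. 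The marking $0$, which carries $\psi_0$ and has the prescribed pole order $2g-2-\sum m_i$, sits on a distinguished root vertex.

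First I would integrate $\psi_0^{d+1}\Lambda(\mu)$ against this decomposition. Each tail is glued to the rest along a separating node, so the Hodge bundle splits over that node and $\Lambda(\mu)$ is multiplicative, while $\psi_0^{d+1}$ is pulled back entirely to the root. Hence each term factorizes as
\[
\Big(\int_{\overline{\mathcal{H}}_{g_{\mathrm{root}}}(\vec a_{\mathrm{root}})}\psi_0^{d+1}\Lambda(\mu)\Big)\prod_{v\ \mathrm{tail}}\Big(\int_{\overline{\mathcal{H}}_{g_v}(\vec a_v)}\Lambda(\mu)\Big),
\]
the crucial point being that, apart from the global Hodge class, no tautological class is inserted on the tail vertices: the weighted fundamental class carries no $\psi$- or $\kappa$-classes at the nodes.

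The heart of the argument is a vanishing statement for the tail factors. Writing $\Lambda(\mu)=\sum_{j}\mu^j\lambda_j$ with $\lambda_j\in H^{2j}$ and $\lambda_j=0$ for $j>g_v$, only the single term $\mu^{D_v}\lambda_{D_v}$, with $D_v=\dim_{\mathbb{C}}\overline{\mathcal{H}}_{g_v}(\vec a_v)$, can integrate nontrivially over a tail, and this requires $D_v\le g_v$. Using $\dim\overline{\mathcal{H}}_{g_v}=2g_v-2+\nu_v$ in the holomorphic case and $2g_v-3+\nu_v$ in the meromorphic case, where $\nu_v$ is the number of special points, this inequality forces $g_v=1$ and $\nu_v=1$: the tail is a single node vertex $\overline{\mathcal{H}}_1(0)=\overline{\mathcal{M}}_{1,1}$, on which $\int_{\overline{\mathcal{M}}_{1,1}}\Lambda(\mu)=\tfrac{\mu}{24}$. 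Every higher-genus tail, every tail with extra marked points, and every chain of tails is killed by the dimension bound (and, in the single-node higher-genus case, by parity, since $2g_v-1$ is odd). Because a surviving tail meets the root at an order-$0$ node, the matching condition $a+a'=-2$ forces the corresponding root order to be $-2$; with $k$ such tails the root stratum is exactly $\overline{\mathcal{H}}_{g-k}(\vec a,\underbrace{-2,\dots,-2}_{k})$. Summing over the number $k$ of elliptic tails, with $(\mu/24)^k$ from the Hodge integrals and $1/k!$ from the automorphisms permuting the identical tails, reproduces the right-hand side of \eqref{eq: link DR1 and diff}.

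The main obstacle is the bookkeeping of the Farkas--Pandharipande decomposition itself: one must verify that the surviving extra components are weighted by $1$ (so no spurious constants appear) and, more delicately, that the configurations which merge two prescribed orders on a genus-$0$ bubble --- the only other tails not excluded by the dimension count, since $\overline{\mathcal{M}}_{0,3}$ is $0$-dimensional --- lie inside the closure of the main stratum $\overline{\mathcal{H}}_g(\vec a)$ rather than contributing as separate extra terms. Pinning down exactly which twisted graphs are genuine extra components versus limits of honest differentials is where the precise geometry of \cite{farkas2018moduli,BHPSS2020pixton} must be invoked; once it is in place, the remainder is the elementary dimension count above.
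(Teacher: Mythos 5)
Your proposal is correct and follows essentially the same route as the paper's proof: replace ${\rm DR}_g^1$ by the Farkas--Pandharipande star-graph class ${\rm H}_{g,\vec m}$ via Conjecture A (proved in \cite{BHPSS2020pixton}), factor the integral using multiplicativity of $\Lambda(\mu)$ and the fact that $\psi_0^{d+1}$ lives on the central vertex, and kill all outer vertices except elliptic tails $\overline{\mathcal{H}}_1(0)$ by the dimension count, each contributing $\mu/24$ and forcing a $-2$ twist on the root. The residual bookkeeping you flag as the remaining obstacle is automatic from the definition of ${\rm H}_{g,\vec m}$ in \cite[Appendix A.4]{farkas2018moduli}: outer vertices carry \emph{holomorphic} strata, so genus $0$ tails cannot occur at all (one would need $\sum a_i=-2$ with all $a_i\ge 0$), and an edge whose outer half-twist is $0$ carries weight $1$.
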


\begin{proof}
In \cite[Appendix A.4]{farkas2018moduli}, a class 
\[
{\rm H}_{g,\left(m_{0},\dots,m_{n}\right)}
\]
is constructed and we refer to this reference for its definition. The statement of Conjecture A is 
\[
{\rm DR}_{g}^{1}\left(m_{0},\dots,m_{n}\right)={\rm H}_{g,\left(m_{0},\dots,m_{n}\right)},
\]
if there exists a negative integer in the list $m_{0},\dots,m_{n}$. Intersecting this equality with $\psi_{0}^{d+1}$ and $\Lambda\left(\mu\right)$ yields
\begin{equation}
\int_{{\rm DR}_{g}^{1}\left(2g-2-\sum_{i=1}^{n}m_{i},m_{1},\dots,m_{n}\right)}\psi_{0}^{d+1}\Lambda\left(\mu\right)=\int_{{\rm H}_{g,\left(2g-2-\sum_{i=1}^{n}m_{i},\dots,m_{n}\right)}}\psi_{0}^{d+1}\Lambda\left(\mu\right).\label{eq: Conjecture A for int numbers}
\end{equation}
The class ${\rm H}_{g,\left(m_{0},\dots,m_{n}\right)}$ is a weighted sum over star graphs with a strata of meromorphic differentials on the central vertex and a strata of holomorphic differentials on each leaf of the graph. Since the $0$-th marked point corresponds to a pole it lies on the central vertex, so the class $\psi_{0}^{d+1}$ lies on the central vertex. Moreover the full Hodge class being a CohFT, it splits well when intersected with a boundary stratum. Thus, the contribution of a leaf is given by the intersection number of the class $\Lambda\left(\mu\right)$ on a strata of holomorphic differentials. A direct dimension counting shows that the only non vanishing contribution of a leaf is
\[
\int_{\overline{\mathcal{H}}_{g=1}\left(0\right)}\lambda_{1}=\frac{1}{24}.
\]
Thus, a star graph contributing to the RHS of Eq.~(\ref{eq: Conjecture A for int numbers}) is necessarily such that: all marked points are on the central vertex, moreover a leaf can only be of genus $1$, connected by a single edge of twist $1$ to the central vertex and it contributes by $\frac{1}{24}$. This gives the desired conclusion. 
\end{proof}
We now explain how this lemma implies Proposition~\ref{prop: polynomiality intersection numbers} and Proposition~\ref{prop: link DR1 and MD}. 
\begin{proof}
[Proof of Proposition~\ref{prop: polynomiality intersection numbers}]We prove by induction on the genus that
\[
\int_{\overline{\mathcal{H}}_{g}\Big(2g-2-\sum_{i=1}^{n}m_{i},m_{1},\dots,m_{n},\Big)}\psi_{0}^{d+1}\Lambda\left(\mu\right)
\]
is a polynomial of degree $2g$ in $m_{1},\dots,m_{n}$.  This intersection number appears as the term $k=0$ in the sum on the RHS of Eq.~(\ref{eq: link DR1 and diff}). The rest of the terms on the RHS are of smaller genera. Moreover, it is proved by \cite{PixtonZagier} that the cycle ${\rm DR}_{g}^{1}\left(2g-2-\sum_{i=1}^{n}m_{i},m_{1},\dots,m_{n}\right)$ is a polynomial of degree $2g$ in $m_{1},\dots,m_{n}$, and thus the LHS of Eq.~(\ref{eq: link DR1 and diff}) is a polynomial of degree $2g$. We deduce by induction the desired property. 
\end{proof}
\begin{proof}
[Proof of Proposition~\ref{prop: link DR1 and MD}]We first substitute $m_{n}=2g-1-\sum_{i=1}^{n-1}m_{i}$ and $\mu=\frac{-\epsilon^{2}}{i\hbar}$ in Eq.~(\ref{eq: link DR1 and diff}). Then this equality of numbers translate in the following equality of generating series
\[
H_{d}^{{\rm DR}^{1}}=\sum_{l\geq0}\frac{1}{l!}\left(\frac{-\epsilon^{2}}{24}\frac{\partial}{\partial q_{-2}}\right)^{l}H_{d}=\exp\left(\frac{-\epsilon^{2}}{24}\frac{\partial}{\partial q_{-2}}\right)H_{d}.
\]
Moreover, the operator $\frac{\partial}{\partial q_{-2}}$ acting on a singular differential polynomial writes in $u$-variables as $\frac{\partial}{\partial q_{-2}}=\sum_{k\geq0}\partial_{x}^{k}\left(\frac{1}{\left(ix\right)^{2}}\right)\frac{\partial}{\partial u_{k}}$. Thus, using the general identity of power series $\exp\left(t\partial_{x}\right)f\left(x\right)=f\left(x+t\right)$, we obtain 
\[
H_{d}^{{\rm DR}^{1}}=H_{d}\Bigg\vert_{\substack{u_{0}\rightarrow u_{0}+\frac{\epsilon^{2}}{24}\frac{1}{x^{2}}\\
\quad u_{1}\rightarrow u_{1}+\frac{\epsilon^{2}}{24}\partial_{x}\left(\frac{1}{x^{2}}\right)\\
\quad\vdots
}
}=H_{d}\Bigg\vert_{u\left(x\right)\rightarrow u\left(x\right)+\frac{1}{24}\frac{\epsilon^{2}}{x^{2}}}.
\]
\end{proof}

\section{\label{sec: proof main theorem}Proof of the main theorem}

In this section, we prove Theorem~\ref{thm: main theorem}. Equivalently, we prove
\[
H_{d}=\left(H_{d}^{{\rm DR}}\right)^{[0]},\quad{\rm for}\,\,d\geq-1.
\]
where $H_{d}$ is the hamiltonian density of the quantum meromorphic differential hierarchy for the trivial CohFT, and $H_{d}^{{\rm DR}}$ is the hamiltonian density of the quantum trivial DR hierarchy introduced in Section~\ref{sec: main results}. In addition, we recall that the degree of a singular differential polynomial is determined by 
\[
{\rm deg}\left(u_{i}\right)=i,\quad{\rm deg}\left(\epsilon\right)=-1,\quad {\rm deg}\left(\hbar\right)=-2 \;\;{\rm and}\;\;{\rm deg}\left(\frac{1}{x}\right)=1,
\]
and the upper index $[0]$ stands for extracting the degree $0$ in a singular differential polynomial. 

Before explaining the strategy of the proof, we introduce two singular differential polynomials.

\subsection{Preliminaries}

Fix $d\geq0$. We define
\[
G_{d}:=\sum_{\substack{g,n\geq0\\
2g+-1+n>0
}
}\frac{\left( i\hbar \right)^{g}}{n!}\sum_{m_{1},\dots,m_{n}\in\mathbb{Z}}\quad P_{g,d}\left(m_{1},\dots,m_{n}\right)\quad q_{m_{1}}\cdots q_{m_{n}}\left(ix\right)^{\sum_{i=1}^{n}m_{i}-2g},
\]
where $P_{g,d}\left(m_{1},\dots,m_{n}\right)$ is the polynomial in $m_{1},\dots,m_{n}$ given by $\int_{\overline{\mathcal{H}}_{g}\left(2g-2-\sum_{i=1}^{n}m_{i},m_{1},\dots,m_{n}\right)}\psi_{0}^{d} \Lambda\left(\frac{-\epsilon^2}{i\hbar}\right)$ for $2g-2-\sum_{i=1}^{n}m_{i}<0$, where we recall our notation $\Lambda\left(s\right)=1+s\lambda_{1}+\cdots+s^{g}\lambda_{g}$. The polynomiality follows from Proposition~\ref{prop: polynomiality intersection numbers}. We have
\[
\frac{\delta\overline{G}_{d+1}}{\delta u}=\sum_{m\in\mathbb{Z}}\left(ix\right)^{-m-1}\frac{\partial\overline{G}_{d+1}}{\partial q_{m}}=H_{d}
\]
and
\[
\overline{G}_{d}=\overline{H}_{d},
\]
where the last equality follows from a direct calculation using the pull-back property of the $\psi$-class. Thus, the densities $\left(G_{d}\right)_{d\geq0}$ form a second system of hamiltonian densities for the quantum meromorphic differential hierarchy for the trivial CohFT. 

We also introduce, following \cite{BR2016}, hamiltonian densities of the quantum trivial DR hierarchy written in terms of the $u$-variables: 
\[
G_{d}^{{\rm DR}}=\sum_{\substack{g,n\geq0\\
2g-1+n>0
}
}\frac{\left(i\hbar\right)^{g}}{n!}\sum_{s_{1},\dots,s_{n}\geq0}\left[a_{1}^{s_{1}}\cdots a_{n}^{s_{n}}\right]\left(\int_{\DR g{-\sum_{i=1}^{n},a_{1},\dots,a_{n}}}\psi_{0}^{d} \Lambda\left(\frac{-\epsilon^2}{i\hbar}\right) \right)u_{s_{1}}\cdots u_{s_{n}},
\]
for $d\geq0$. We have
\[
\frac{\delta\overline{G_{d+1}^{{\rm DR}}}}{\delta u}=H_{d}^{{\rm DR}},
\]
and they satisfy
\[
\overline{G_{d}^{{\rm DR}}}=\overline{H_{d}^{{\rm DR}}}.
\]

\subsection{Strategy of the proof}

The goal of the rest of the proof is to show that
\begin{equation}
\overline{G}_{d}=\left(\overline{G_{d}^{{\rm DR}}}\right)^{[0]},\quad{\rm for\;}d\geq0.\label{eq: gd equality}
\end{equation}
As a consequence we get
\[
H_{d}=\frac{\delta\overline{G}_{d+1}}{\delta u}=\frac{\delta \left(\overline{G_{d}^{{\rm DR}}}\right)^{[0]}}{\delta u}=\left(\overline{H_{d}^{{\rm DR}}}\right)^{[0]},\quad{\rm for\;}d\geq-1.
\]
which proves the theorem.

The equality (\ref{eq: gd equality}) is obtained as an application of Lemma~\ref{lem: Buryak-Dubrovin lemma} stated below. This lemma, and its proof, is an adaptation of a lemma of Buryak \cite[Lemma 2.4]{Buryak_Hodge15} for singular differential polynomials (instead of differential polynomials) and in the quantum setting. We thank Alexandr Buryak for helping us to prove the quantum version of this lemma.

\begin{lem}
\label{lem: Buryak-Dubrovin lemma}
Suppose we have $\overline{H}=\int\left(\frac{u^{3}}{3!}+O\left(\epsilon,\hbar\right)\right)dx$ and $\overline{Q}=\int\left(Q_{0}+O\left(\epsilon,\hbar\right)\right)dx$ two singular local functionals of homogenous degree $0$ such that 
\[
\left[\overline{Q},\overline{H}\right]=0.
\]
Then $\overline{Q}$ is uniquely determined by $\overline{H}$ and $\overline{Q_{0}}$. In particular, if $\overline{Q_{0}}=0$ then $\overline{Q}=0$.
\end{lem}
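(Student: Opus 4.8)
The plan is to reduce the statement to the injectivity of a single operator and then to run an induction on the order in $\epsilon$ and $\hbar$. First, since $[\,\cdot\,,\overline H]$ is linear in its first argument, it suffices to prove the ``in particular'' assertion: if $\overline{Q_0}=0$ and $[\overline Q,\overline H]=0$ then $\overline Q=0$. Indeed, any two solutions with the same $\overline H$ and the same $\overline{Q_0}$ differ by a singular local functional with vanishing weight-$0$ part and zero commutator with $\overline H$, to which the ``in particular'' statement applies. To organize the induction I would filter by the weight $w(\mathfrak m)=a+2b$ of a monomial $\mathfrak m=u_{s_1}\cdots u_{s_m}x^{-j}\epsilon^{a}\hbar^{b}$, which for an object of homogeneous degree $0$ coincides with $\sum_i s_i+j$. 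Write $\overline Q=\sum_{w\ge 0}\overline Q^{(w)}$ and $\overline H=\sum_{w\ge 0}\overline H^{(w)}$ for the decompositions into weight-homogeneous parts; by hypothesis $\overline H^{(0)}=\int \tfrac{u^{3}}{6}\,dx$ and $\overline Q^{(0)}=\overline{Q_0}$.

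The crucial bookkeeping step is that, in the commutator formula of Proposition~\ref{prop: commutator formula}, the $n$-th summand of $[\overline Q^{(w)},\overline H^{(w')}]$ has weight exactly $w+w'+2n$: the explicit $\hbar^{n}$ raises $a+2b$ by $2n$, while the remaining derivatives preserve the total degree. Consequently the weight-$W$ part of the equation $[\overline Q,\overline H]=0$ reads
\[
\hbar\,\{\overline Q^{(W-2)},\overline H^{(0)}\}\;+\;\bigl(\text{terms involving only }\overline Q^{(w)}\text{ with }w\le W-3\bigr)=0 ,
\]
the first term being the unique contribution with $n=1$ and $w'=0$, and all others involving strictly lower-weight parts of $\overline Q$ together with the (fixed) $\overline H$.

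This identity is the engine of the argument. Reading it for increasing $W\ge 3$ expresses $\{\overline Q^{(W-2)},\overline H^{(0)}\}$ in terms of data of strictly smaller weight. Hence, as soon as one knows that the operator
\[
D:=\{\,\cdot\,,\overline H^{(0)}\}=\Bigl\{\,\cdot\,,\int \tfrac{u^{3}}{6}\,dx\Bigr\}
\]
is \emph{injective} on singular local functionals of weight $\ge 1$, an induction on $W$ shows that every $\overline Q^{(k)}$ with $k\ge 1$ is uniquely determined by $\overline H$ and $\overline Q^{(0)}=\overline{Q_0}$ (the uniqueness assertion); and when $\overline{Q_0}=0$ the same induction, with all lower-weight terms vanishing by the inductive hypothesis, forces $\{\overline Q^{(k)},\overline H^{(0)}\}=0$ and therefore $\overline Q^{(k)}=0$ for every $k$, i.e. $\overline Q=0$.

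It remains to prove the injectivity of $D$, which I expect to be the main obstacle and is exactly where the singular setting departs from Buryak's original lemma. The key observation is that $D\overline P=\int \mathcal L P\,dx$, where $\mathcal L=\sum_{s\ge 0}\partial_x^{s}(u\,u_x)\,\tfrac{\partial}{\partial u_s}$ is the evolutionary vector field of the dispersionless KdV (Riemann) flow $u_\tau=u\,u_x=\partial_x\tfrac{\delta \overline H^{(0)}}{\delta u}$; thus $D\overline P=0$ means precisely that $\overline P$ is a conserved quantity of this flow, and I would prove that the only such conserved singular local functionals are the weight-$0$ Casimirs $\int f(u)\,dx$. Since $D$ preserves the pair $(a,b)$ of $\epsilon,\hbar$-exponents, one may fix them and induct on the level $\ell=\sum_i s_i+j$: reducing $\mathcal L P$ modulo $\mathrm{Im}\,\partial_x$ and exploiting the triangular structure of $\mathcal L$ in the number of derivatives (its leading part being multiplication by $u$ composed with $\partial_x$) together with the injectivity of $\partial_x$ on densities modulo constants, one shows that any conserved density of positive level is a total $x$-derivative, so that $\overline P$ collapses to level $0$. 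At level $0$ the claim is the classical fact that $u_\tau=u\,u_x$ admits no nontrivial conservation laws beyond $\int f(u)\,dx$, and the spurious poles in $x$ are ruled out by the computation $D\!\int f(u)x^{-j}\,dx=j\!\int x^{-j-1}\tilde f(u)\,dx\neq 0$ for $j\ge 1$, where $\tilde f{}'(u)=u f'(u)$. This yields $\ker D\cap\{\text{weight}\ge 1\}=0$ and completes the proof.
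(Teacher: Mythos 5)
Your outer argument is sound and is essentially the paper's own: you filter $\overline Q$ by the order in $(\epsilon,\hbar)$ (your combined weight $a+2b$ versus the paper's lexicographically minimal bidegree $(i_0,j_0)$ --- the two are interchangeable here), observe via Proposition~\ref{prop: commutator formula} that the only contribution to the leading part of $\left[\overline Q,\overline H\right]$ is $\hbar\left\{\,\cdot\,,\int\frac{u^3}{3!}dx\right\}$ applied to the leading part of $\overline Q$, and thereby reduce both assertions to the injectivity of $D=\left\{\,\cdot\,,\int\frac{u^3}{3!}dx\right\}$ on singular local functionals of positive homogeneous degree not involving $\epsilon,\hbar$. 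This reduction, including the bookkeeping isolating the $(w',n)=(0,1)$ term, is correct and matches the paper.

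The gap is that this injectivity statement \emph{is} the substance of the lemma --- it is exactly the paper's Lemma~\ref{lem: Key technical lemma} --- and you assert it rather than prove it. Your reformulation ($D\overline P=0$ means $\overline P$ is conserved along $u_\tau=uu_x$) is correct, but the claim that ``any conserved density of positive level is a total $x$-derivative'' is precisely the hard step in the singular setting, and the triangularity heuristic does not close it: $\mathcal LP$ is only required to lie in ${\rm Im}\,\partial_x\oplus\mathbb{C}\oplus\frac{\mathbb{C}}{x}$, and $\partial_x$ mixes the $u$-variables with the $x^{-j}$ factors, so the leading-symbol argument does not immediately descend. The paper's proof (adapting \cite[Lemma 2.5]{Buryak_Hodge15}) requires: (i) a separate treatment of degree $1$, where one must show the functionals $\int \frac{u_0^nu_1}{x}dx$, $n>0$, are linearly independent in order to exclude densities $\frac{u_0^n}{x}$; (ii) for the leading monomial $f(u)u_1^{\alpha_1}\cdots u_m^{\alpha_m}x^{-n}$ in a lexicographic order that includes $\frac1x$ as the smallest letter, a computation with the auxiliary bracket $\left[uu_1,P\right]$ showing $\alpha_m=1$, which is what licenses subtracting an explicit $\partial_x$-derivative and lowering the order; and (iii) the pure-pole case $f(u)x^{-n}$, which your computation $D\int f(u)x^{-j}dx=j\int x^{-j-1}F(u)dx$ does handle correctly. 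Items (i) and (ii) are absent from your sketch, and without them the descent you describe does not get off the ground; everything before that point, however, is a faithful (and slightly cleaner) version of the paper's reduction.
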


Before giving the proof, we explain how formula~(\ref{eq: gd equality}) is deduced from this lemma. We are going to prove the following equalities:
\begin{enumerate}
\item we have 
\[
\overline{G}_{1}=\left(\overline{G_{1}^{DR}}\right)^{\left[0\right]}=\int\left(\frac{u^{3}}{3!}+O\left(\epsilon,\hbar \right)\right),
\]
\item we have
\[
\left[\epsilon^{0}\hbar^{0}\right]\overline{G}_{d}=\left[\epsilon^{0}\hbar^{0}\right]\left(\overline{G_{d}^{DR}}\right)^{\left[0\right]}=\frac{u^{d+2}}{\left(d+2\right)!},\quad d\geq0,
\]
\item we have
\[
\left[\overline{G_{d}},\overline{G_{1}}\right]=0=\left[\left(\overline{G_{d}^{DR}}\right)^{\left[0\right]},\left(\overline{G_{1}^{DR}}\right)^{\left[0\right]}\right],\quad d\geq0.
\]
\end{enumerate}

Once this is established, it follows from Lemma~\ref{lem: Buryak-Dubrovin lemma} that $\overline{G}_{d}$ and the degree $0$ of $\overline{G_{d}^{DR}}$ are uniquely determined by $\overline{G}_{1}=\left(\overline{G_{1}^{DR}}\right)^{\left[0\right]}$ and $\int\frac{u_{0}^{d+2}}{\left(d+2\right)!}dx$, thus they are equal. This is the desired conclusion. 

We first prove the items $1,2$ and $3$. Then we prove Lemma~\ref{lem: Buryak-Dubrovin lemma}. 

\subsection{Proof of item $3$}

We justified in the preliminaries that $\overline{G}_{d}=\overline{H}_{d}$, then the integrability condition of the quantum meromorphic differentials hierarchy gives the first equality. 

The integrability condition of the quantum DR hierarchies, proved in \cite{BR2016}, gives $[ \overline{G_{d}^{DR}},\overline{G_{1}^{DR}} ]^{DR}=0$ for $d \geq 0$. Moreover, each highest degree monomial  in $G_{d}^{DR}$, for $d\geq 0$, is of degree zero. Thus formula~(\ref{eq: link commutator DR and MD}) gives
\[
	0 = \left[\left(\overline{G_{d}^{DR}}\right)^{\left[0\right]},\left(\overline{G_{1}^{DR}}\right)^{\left[0\right]}\right]+ \rm{terms \, of \, lower \, degree},
\]
where $\left[\left(\overline{G_{d}^{DR}}\right)^{\left[0\right]},\left(\overline{G_{1}^{DR}}\right)^{\left[0\right]}\right]$ is of homogeneous differential polynomial of degree $-1$ as one can check using Proposition~\ref{prop: commutator formula}. Thus it vanishes. This prove the second equality of item $3$.

\subsection{Proof of item $2$}

In genus $0$, the strata of meromorphic differentials and the DR cycle are codimension $0$ cycles in the moduli space of curves. Thus they are equal. A direct calculation shows that the genus $0$ contribution of $G_{d}$ and $G_{d}^{{\rm DR}}$ is given by $\frac{u_{0}^{d+2}}{\left(d+2\right)!}$.

\subsection{Proof of item $1$}

We have \cite[Section 3.1.1]{BR2016}
\[
\left(\overline{G_{1}^{{\rm DR}}}\right)^{[0]}=\int\left(\frac{u^{3}}{6}+\frac{\epsilon^{2}}{24}uu_{2}\right)dx.
\]
We now compute $\overline{G}_{1}$. The intersection numbers involved in $G_{1}$ are 
\[
\int_{\overline{\mathcal{H}}_{g}\left(2g-2-\sum_{i=1}^{n}m_{i},m_{1},\dots,m_{n}\right)}\psi_{0}^{1}\lambda_{l},\quad{\rm with}\,\,2g-2-\sum_{i=1}^{n}m_{i}<0,\quad{\rm and}\,\,0\leq l \leq g.
\]
By dimension counting, they can be non zero only for $\left(g,n,l\right)=\left(0,3,0\right),\left(1,2,1\right),\left(2,1,2\right),\left(1,1,0\right)$ and also possibly when $n=0$. However, each $n=0$ contribution to $G_1$ either belongs to the kernel of the integration map, so it does not contribute to $\overline{G_1}$. As we justified in the previous point, the genus $0$ contribution of $G_{1}$ is $\frac{u^{3}}{6}$. The contributions $(2,1,2)$ and $(1,1,0)$ of $\int G_{1}dx$ are respectively
\[
\int_{\overline{\mathcal{H}}_{2}\left(-1,3\right)}\psi_{0}^{1}\lambda_{2} \quad \rm{and}\quad \int_{\overline{\mathcal{H}}_{1}\left(-1,1\right)}\psi_{0}^{1}\lambda_{0},
\]
which vanish. Indeed, in both case the strata of differentials has a unique simple pole, and thus by the residue condition these strata are empty. Finally, the contribution $(1,2,1)$ of $G_{1}$ is computed as follows. Lemma~\ref{lem: Conjecture A for intersection numbers} gives
\[
\int_{\overline{\mathcal{H}}_{g=1}\left(-m_{1}-m_{2},m_{1},m_{2}\right)}\psi_{0}^{1}\lambda_{1}=\int_{{\rm DR}_{g=1}^{1}\left(-m_{1}-m_{2},m_{1},m_{2}\right)}\psi_{0}^{1}\lambda_{1}-\frac{1}{24},
\]
when $m_{1}+m_{2}<0$. Moreover, in genus $1$ the ${\rm DR}$- and ${\rm DR}^{1}$ cycles are equal
\[
{\rm DR}_{g=1}^{1}\left(-m_{1}-m_{2},m_{1},m_{2}\right)={\rm DR}_{g=1}\left(-m_{1}-m_{2},m_{1},m_{2}\right).
\]
This follows from Pixton formula by applying known tautological relations between divisors of $\overline{\mathcal{M}}_{1,n}$. Furthermore, the class $\lambda_{1}$ is supported on the moduli space of genus one curves of compact type, thus we compute the integral using Hain formula \cite{hain2011normal}. We get 
\[
\int_{{\rm DR}_{g=1}\left(-m_{1}-m_{2},m_{1},m_{2}\right)}\psi_{0}^{1}\lambda_{1}=\frac{m_{1}^{2}+m_{2}^{2}}{24}.
\]
Finally, gathering the different contributions and using $m^{2}=m^{\underline{2}}+m^{\underline{1}}$, we obtain
\[
\int_{\overline{\mathcal{H}}_{g=1}\left(-m_{1}-m_{2},m_{1},m_{2}\right)}\psi_{0}^{1}\lambda_{1}=\frac{m_{1}^{\underline{2}}+m_{2}^{\underline{2}}}{24}+\frac{m_{1}^{\underline{1}}+m_{2}^{\underline{1}}}{24}-\frac{1}{24}.
\]
Thus the contribution $(1,2,1)$ of $G_{1}$ is
\[
\left[\epsilon^{2}\hbar^0\right]G_{1}=\frac{uu_{2}}{24}+\partial_{x}\left(\frac{1}{48}\frac{u_{0}^{2}}{x}\right).
\]
We conclude that $\overline{G}_{1}=\left(\overline{G_{1}^{{\rm DR}}}\right)^{[0]}$.

\subsection{Proof of Lemma~\ref{lem: Buryak-Dubrovin lemma}}
The key ingredient is the following lemma.

\begin{lem}
\label{lem: Key technical lemma}Let $P$ be a singular differential polynomial such that $P$ does not depend on $\epsilon$ nor $\hbar$ and is homogenous of positive degree. Suppose that $\left\{ \int Pdx,\int\frac{u^{3}}{6}dx\right\} =0$,
	then
\[
\int Pdx=0.
\]
\end{lem}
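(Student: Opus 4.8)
The plan is to prove the statement by a degree-by-degree (or rather term-by-term in the $u$-variables) argument, exploiting the fact that the Poisson bracket with $\int \frac{u^3}{6}dx$ is governed by a first-order differential operator in the variational derivatives. First I would compute the bracket explicitly using the $u$-variable formula from Section~\ref{subsec: Poisson bracket}: since $\frac{\delta}{\delta u}\int\frac{u^3}{6}dx = \frac{u^2}{2}$, we have
\[
\left\{\int Pdx,\int\frac{u^3}{6}dx\right\}=\int \frac{\delta}{\delta u}\left(\int Pdx\right)\,\partial_x\!\left(\frac{u^2}{2}\right)dx=\int \frac{\delta P}{\delta u}\,u\,u_1\,dx,
\]
where I write $\frac{\delta P}{\delta u}$ for the variational derivative of the functional represented by $P$. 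The hypothesis thus says $\int \frac{\delta P}{\delta u}\,uu_1\,dx=0$ as a singular local functional, i.e. $\frac{\delta P}{\delta u}\,uu_1$ lies in the image of $\partial_x$ (up to the allowed constant and $1/x$ ambiguities recorded in Section on local functionals).

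The key idea I would use is to introduce the grading/filtration by \emph{the number of $u$-variables} (the polynomial degree in the $u^\alpha_k$, counting $x^{-1}$ separately), and argue that $\int Pdx$ must vanish by showing its variational derivative $\frac{\delta P}{\delta u}$ vanishes. The standard move, following Buryak's original lemma \cite[Lemma 2.4]{Buryak_Hodge15}, is to consider the lowest term in the $u$-filtration of $\frac{\delta P}{\delta u}$. Writing $Q:=\frac{\delta P}{\delta u}$, the condition $\int Q\,uu_1\,dx=0$ means $Q\,uu_1=\partial_x(\text{something})$. I would take the part of $Q$ of lowest degree in the number of $u$-factors, say $Q^{\mathrm{low}}$, and observe that multiplying by $uu_1$ raises this count by exactly two, so the lowest-degree part of the relation reads $Q^{\mathrm{low}}\,uu_1\in\mathrm{Im}\,\partial_x$ within the appropriate homogeneous piece. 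The point is that the operator $P\mapsto \int P uu_1\,dx$ (or its integrand version modulo $\partial_x$) has trivial kernel on the relevant space once one controls the $x^{-1}$-behaviour: multiplication by $uu_1=\frac{1}{2}\partial_x(u^2)\cdot$ structure lets one integrate by parts and conclude that $Q^{\mathrm{low}}$ itself is forced to be a total $x$-derivative of something with no $u$-dependence, hence by homogeneity and the fact that $Q$ is a genuine (singular) differential polynomial in $u$, that $Q^{\mathrm{low}}=0$. Iterating up the filtration gives $Q=\frac{\delta P}{\delta u}=0$, and since the variational derivative has kernel exactly the constants and total derivatives (as recorded for $\int\mathcal{A}^{\rm sing}dx$ in the local-functionals discussion), this yields $\int Pdx=0$.

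The main subtlety, and the step I expect to be the real obstacle, is the bookkeeping of the \emph{singular} part, i.e. the $x^{-1}$ factors. In the non-singular setting (Buryak's original lemma) the kernel of $\int\cdot\,dx$ is just $\mathrm{Im}\,\partial_x\oplus\mathbb{C}[[\epsilon]]$, and the vanishing of $\frac{\delta P}{\delta u}$ cleanly forces $\int Pdx=0$; here the enlarged kernel $\mathrm{Im}\,\partial_x\oplus\mathbb{C}[[\epsilon]]\oplus\frac{\mathbb{C}[[\epsilon]]}{x}$ means I must separately verify that the extra $\frac{\mathbb{C}[[\epsilon]]}{x}$ ambiguity does not obstruct the argument, and that when I integrate by parts in the $x$-variable (using $\partial_x = \frac{\partial}{\partial x}+\sum_k u_{k+1}\frac{\partial}{\partial u_k}$ on $\mathcal{A}^{\rm sing}$) the explicit $x$-dependence is tracked correctly against the homogeneity grading with $\deg(1/x)=1$. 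I would handle this by working in the $q$-variables via the isomorphism $\mathcal{A}^{\rm sing}\simeq\mathcal{A}_q^{\rm sing}$, where the integration map is the concrete residue extraction ${\rm Coef}_{(ix)^{-1}}$ and the Poisson bracket is the explicit sum over $k\in\mathbb{Z}$ from Section~\ref{subsec: Poisson bracket}; there the condition $\{\int Pdx,\int\frac{u^3}{6}dx\}=0$ becomes a concrete relation among the coefficient functions $f_{n,l,s}$, and the filtration by number of $q$-variables is manifest. Establishing injectivity of the relevant map at each filtration level in these coordinates is then a finite-dimensional linear-algebra statement, which is exactly the quantum/singular adaptation of Buryak's lemma that the text credits to him.
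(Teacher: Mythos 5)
Your setup is correct as far as it goes: the bracket condition is exactly $\int\frac{\delta P}{\delta u}\,uu_1\,dx=0$; multiplication by $uu_1$ shifts the number of $u$-factors by exactly two while $\partial_x$ preserves it, so the condition splits into \emph{independent} conditions $\overline{Q^{(k)}uu_1}=0$ on each homogeneous piece $Q^{(k)}$ of $Q=\frac{\delta P}{\delta u}$ (in particular there is no genuine induction ``up the filtration'' to perform), and injectivity of $\frac{\delta}{\delta u}$ on singular local functionals does reduce the lemma to showing each $Q^{(k)}=0$. The gap is that this last implication --- that $Q\,uu_1\in\mathrm{Im}\,\partial_x$ forces $Q=0$ for a homogeneous singular differential polynomial of positive degree --- \emph{is} the entire content of the lemma, and your proposal only gestures at it (``integrate by parts and conclude''). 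It is not a formal triviality: constants lie in the kernel of $Q\mapsto\overline{Q\,uu_1}$ because $c\,uu_1=\partial_x\!\left(\tfrac{c}{2}u^2\right)$, so positivity of the degree must enter in an essential way; and writing $Q\,uu_1=\tfrac12\partial_x(Qu^2)-\tfrac12 u^2\partial_xQ$ only trades the problem for showing that $\overline{u^2\,\partial_xQ}=0$ forces $Q$ to be constant, which is no easier. Declaring each graded piece ``a finite-dimensional linear-algebra statement'' is true but does not yield a uniform argument over all degrees and all numbers of $u$-variables.

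The paper closes exactly this gap by a more hands-on route: it works with $P$ itself rather than with $\frac{\delta P}{\delta u}$, converts the hypothesis into $\overline{\left[uu_1,P\right]}=0$ for the auxiliary bracket $\left[f,g\right]=\sum_{s}\left(\left(\partial_x^sf\right)\frac{\partial g}{\partial u_s}-\left(\partial_x^sg\right)\frac{\partial f}{\partial u_s}\right)$, orders monomials lexicographically with $\frac1x<u_0<u_1<\cdots$, computes the leading term of $\left[uu_1,P\right]$ to force the top exponent $\alpha_m$ of the leading monomial of $P$ to equal $1$ (the coefficient $\sum_{k}(k+1)\alpha_k-\alpha_1-1$ must vanish), and then subtracts an explicit $\partial_x$-exact term to strictly decrease the leading monomial, iterating until $P$ is exhibited as a total derivative; the degenerate cases ($\deg P=1$, and leading monomials involving only $u_0$ and $1/x$) are treated separately because the general mechanism breaks down there. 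Some such leading-term analysis is unavoidable, and your proposal does not contain one, so as written it does not prove the lemma.
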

Before proving Lemma~\ref{lem: Key technical lemma}, we explain how it implies Lemma~\ref{lem: Buryak-Dubrovin lemma}. Suppose $\overline{Q}^{\left(1\right)}$ and $\overline{Q}^{\left(2\right)}$ are two local functionals of degree $0$ satisfying the hypothesis of Lemma~\ref{lem: Buryak-Dubrovin lemma}, and let 
\[
\overline{P}=\overline{Q}^{\left(1\right)}-\overline{Q}^{\left(2\right)}=\sum_{i,j\geq0}\overline{P_{i,j}}\hbar^{i}\epsilon^{j}.
\]
Suppose that $\overline{P}$ is nonzero, then let $i_{0}$ be the minimal power of $\hbar$ in $\overline{P}$ and let $j_{0}$ be the minimal power of $\epsilon$ in factor of $\hbar^{i_{0}}$ in $\overline{P}$, so that $\overline{P_{i_{0},j_{0}}}\neq0$. Let us verify that $\overline{P_{i_{0},j_{0}}}$ satisfies the hypothesis of Lemma~\ref{lem: Buryak-Dubrovin lemma}. By definition of $\overline{Q}^{\left(1\right)}$ and $\overline{Q}^{\left(2\right)}$, we have $\overline{P_{0,0}}=0$ so that $i_{0}+j_{0}>0$. In addition, since $\overline{Q}^{\left(1\right)}$ and $\overline{Q}^{\left(1\right)}$ are of degree $0$, we get that the degree of $ \overline{P_{i_{0},j_{0}}}$ is positive. Moreover, it cannot contain a term $\alpha/x$ by hypothesis. Finally, since the quantum commutator has the form (see Proposition~\ref{prop: commutator formula})
\[
\left[\cdot,\cdot\right]=\hbar\left\{ \cdot,\cdot\right\} +\hbar^{2}\mu\left(\cdot,\cdot\right),
\]
where $\mu$ is bilinear with respect to $\epsilon$ and $\hbar$, we deduce that 
\[
0=\left[\overline{P},\overline{H}\right]=\hbar^{i_{0}+1}\epsilon^{j_{0}}\left\{ \overline{P}_{i_{0},j_{0}},\int\frac{u^{3}}{3!}\right\} +O\left(\hbar^{i_{0}+2}\right)+O\left(\epsilon^{j_{0}+1}\right),
\]
and then $\left\{ \overline{P}_{i_{0},j_{0}},\int\frac{u^{3}}{3!}\right\} =0$. Hence, by Lemma~\ref{lem: Key technical lemma}, we get $\overline{P_{i_{0},j_{0}}}=0$. This contradicts our minimal assumption and we deduce $\overline{P}=0$. This proves Lemma~\ref{lem: Buryak-Dubrovin lemma}. 

\begin{proof}[Proof of Lemma~\ref{lem: Key technical lemma}]
The proof is a direct adaptation of the proof of \cite[Lemma 2.5]{Buryak_Hodge15} for singular differential polynomials. We expose here the modifications required to generalize Buryak's argument.

First, we verify that the lemma works if ${\rm deg}P=1$. In that case, $P$ can only contain the following terms 
\[
u_{0}^{n}u_{1}\,{\rm for}\,n\geq0,\quad\frac{cste}{x},\quad{\rm and}\quad\frac{u_{0}^{n}}{x}{\rm \,for\,}n>0.
\]
The first and second terms are in the kernel of the integration map. If $P$ contains terms of the third type, it cannot satisfy $\left\{ \int Pdx,\int\frac{u^{3}}{6}dx\right\} =0$. Indeed, it suffices to remark that $\left\{ \int\frac{u_{0}^{n}}{x},\int\frac{u^{3}}{6}dx\right\} =\int n\frac{u_{0}^{n}u_{1}}{x}dx$ and the singular local functionals $\int n\frac{u_{0}^{n}u_{1}}{x}dx$, for $n>0$, are linearly independent. Thus $P$ can only contain terms of the first type and then satisfies $\int Pdx=0$.

Now suppose that ${\rm deg}P\geq2$. We define a lexicographic order on each monomial using the order
\[
\frac{1}{x}<u_{0}<u_{1}<\cdots,
\]
this induces a total order relation between the monomials of $P$. Suppose that $$P=f\left(u\right)\frac{u_{1}^{\alpha_{1}}\cdots u_{m}^{\alpha_{m}}}{x^{n}}+{\rm lower\,lexicographic\,terms},$$ with $f\left(u\right)=\sum_{i\geq0}f_{i}u_{0}^{i}$ and $f_{i}\in\mathbb{C}.$ If $m=0$, then $P=\sum_{i\geq 0}f_{i}\frac{u_{0}^{i}}{x^{n}}$ where $n={\rm deg}P$, but repeating our last argument shows that $\left\{ \int Pdx,\int\frac{u^{3}}{6}dx\right\} = 0$ only if $f_i=0$ for $i \geq 1$. Thus we directly conclude that $P$ is a $\partial_x$-derivative. 
	
	Now suppose that $m \geq 1$, we will show in the next paragraph that $\alpha_{m}=1$. As a consequence the singular differential polynomial
\[
P-\partial_{x}\left(\frac{1}{x^{n}\left(\alpha_{m-1}+1\right)}f\left(u\right)\left(\prod_{k=1}^{m-2}u_{k}^{\alpha_{k}}\right)u_{m-1}^{\alpha_{m-1}+1}\right)\quad{\rm if}\,m\geq2
\]
or
\[
P-\partial_{x}\left(\frac{1}{x^{n}}\sum_{i\geq0}\frac{f_{i}}{i+1}u_{0}^{i+1}\right)\quad{\rm if}\,m=1
\]
has a smaller lexicographic order than $P$. Moreover, this singular differential polynomial satisfies the three hypothesis of our lemma. Therefore, after repeating our steps  a finite number of times, we get that $P$ is a $\partial_{x}$-derivative. 

In order to show $\alpha_{m}=1$, we introduce the following bracket between singular differential polynomials 
\[
\left[f,g\right]=\sum_{s\geq0}\left(\left(\partial_{x}^{s}f\right)\frac{\partial g}{\partial u_{s}}-\left(\partial_{x}^{s}g\right)\frac{\partial f}{\partial u_{s}}\right).
\]
It is not the commutator of the star product, this bracket notation will only be used throughout this proof. It is clearly explained in the proof of \cite[Lemma 2.5]{Buryak_Hodge15} how the condition $\left\{ \int Pdx,\int\frac{u^{3}}{6}dx\right\} =0$ implies 
\[
\int\left[uu_{1},P\right]dx=0, 
\]
and the argument holds here for the exact same reasons. We deduce that  $\left[uu_{1},P\right]$ is a $\partial_{x}$-derivative. Moreover, we have

\begin{equation}\label{eq: uu1}
\begin{split}
\left[uu_{1},P\right] & =\left(\sum_{k=1}^{m}\left(k+1\right)\alpha_{k}-\alpha_{1}-1\right)\frac{f\left(u_{0}\right)u_{1}^{\alpha_{1}}\cdots u_{m}^{\alpha_{m}}}{x^{n}}\times u_{1}\\
 & +{\rm monomials\,of\,lower\,lexicographic\,order.}
\end{split}
\end{equation}

The only exception being whenever the term in parenthesis vanishes, that is when $m=1$ and $\alpha_{m}=1$, but in this case we directly get the desired conclusion $\alpha_m=1$. Now, since $\left[uu_{1},P\right]$ is a $\partial_{x}$-derivative we deduce from Eq. (\ref{eq: uu1}) that $\alpha_{m}=1$ when $m\geq2$. This concludes the proof.
\end{proof}

\bibliographystyle{alpha}
\bibliography{DRk}

\end{document}